\newtheorem{theorem}{Theorem}[section]
\newtheorem{fact}[theorem]{Fact}
\newtheorem{example}[theorem]{Example}
\newtheorem{lemma}[theorem]{Lemma}
\newtheorem{proposition}[theorem]{Proposition}
\newtheorem{corollary}[theorem]{Corollary}
\newtheorem{definition}[theorem]{Definition}
\newtheorem{remark}[theorem]{Remark}
\newcommand{\tOm}{\tilde{\Omega}}
\newcommand{\F}{\mathcal F}
\newcommand{\K}{\mathbb K}
\newcommand{\KX}{\K(X)}
\newcommand{\Xinf}{\ell_\infty(X)}
\newcommand{\KXinf}{\ell_\infty(\K(X))}
\newcommand{\mS}{\mathcal F}
\newcommand{\N}{\mathbb N}
\newcommand{\R}{\mathbb R}
\newcommand{\on}{\operatorname}
\newcommand{\gifs}{$\mbox{GIFS}_\infty$}
\subjclass[2010]{}
\author{\L ukasz Ma\'slanka}
\address{Institute of Mathematics, \L\'od\'z University of Technology, W\'olcza\'nska 215, 93-005
\L\'od\'z, Poland}
\email {lukasz$\_$maslanka@interia.pl}
\author{Filip Strobin}
\address{Institute of Mathematics, \L\'od\'z University of Technology, W\'olcza\'nska 215, 93-005
\L\'od\'z, Poland}
\email {filip.strobin@p.lodz.pl}
\title[On generalized iterated function systems defined on $\ell_\infty$-sum]{On generalized iterated function systems defined on $\ell_\infty$-sum of a metric space}
\subjclass[2010]{Primary: 28A80; Secondary: 37C25, 37C70} 
\keywords{iterated function systems, generalized iterated function systems, fractals, generalized fixed points, code spaces, Cantor sets}
\begin{document}
\begin{abstract}
Miculescu and Mihail in 2008 introduced a concept of a \emph{generalized iterated function system} (GIFS in short), a particular extension of classical IFS. Instead of families of selfmaps of a metric space $X$, they considered families of mappings defined on finite Cartesian product $X^m$. It turned out that a great part of the classical Hutchinson--Barnsley theory has natural counterpart in this GIFSs' case.\\
Recently, Secelean extended these considerations to mappings defined on the space $\ell_\infty(X)$ of all bounded sequences of elements of $X$ and obtained versions of the Hutchinson--Barnsley theorem for appropriate families of such functions.\\
In the paper we study some further aspects of Secelean's setting. In particular, we introduce and investigate a bit more restrictive framework and we show that some problems of the theory have more natural solutions within such a case.
Finally, we present an example which shows that this extended theory of GIFSs gives us fractal sets that cannot be obtained by any IFSs or even by any GIFSs.
\end{abstract}
\maketitle

\section{Introduction}
Let $(X,d)$ be a metric space. By $\K(X)$ (or $\K(X,d)$) we denote the space of all nonempty and~compact subsets of $X$, endowed with the Hausdorff-Pompeiu metric
$$
H^d(K,D):=\max\left\{\sup\{\inf\{d(x,y):x\in K\}:y\in D\},\sup\{\inf\{d(x,y):x\in D\}:y\in K\}\right\}.
$$
It is well known that $\K(X)$ is complete [compact], provided $X$ is complete [compact].\\
Now if $\F=\{f_1,...,f_n\}$ is a finite family of continuous selfmaps of $X$, then by the same letter $\F$ we define the map $\F:\K(X)\to\K(X)$ in the following way
$$
\F(K):=f_1(K)\cup...\cup f_n(K).
$$ 
Now let $\F=\{f_1,...,f_n\}$ be a finite family of Banach contractions of $X$. The well-known Hutchinson--Barnsley  theorem from early 1980's (\cite{Hut},\cite{Ba}) states that there exists a unique $A_\F\in\K(X)$ such that
$$
A_\F=\F(A_\F)=f_1(A_\F)\cup...\cup f_n(A_\F)
$$
and, moreover, for every $K\in\K(X)$, the sequence $(\F^{(k)}(K))$ of iterations $\F$ of the set $K$, converges to $A_\F$ with respect to the Hausdorff-Pompeiu metric. The set $A_\F$ is called \emph{a fractal} or \emph{an attractor} generated by $\F$. In this setting, a finite family of continuous selfmaps of $X$ is called \emph{iterated function system} (IFS in short).\\
The H--B theorem can be considered as a milestone of a very important part of fractals theory, which is now really rich and advanced. One of the direction of further studies undertaken by mathematicians dealt with the question:
\begin{center}What should be assumed on a function system $\F$ so that it still generates a fractal set?
\end{center}
In particular, it turned out that instead of Banach contractions, it is enough to assume much weaker conditions (see \cite{Ha}) and that IFSs and their fractals can be defined in a purely topological way (see \cite{BKNNS}, \cite{Mi1}, \cite{Ka}).\\
An interesting version of H--B theory was introduced by Miculescu and Mihail in 2008 (see \cite{M}, \cite{MM1}, \cite{MM2}). The main idea was to consider, instead selfmaps of $X$, mappings defined on finite Cartesian product $X^m$ with values in $X$. It turned out that a great part of the H--B theory of fractals has a natural counterpart in such generalized case and also, the class of fractals in such general setting is essentially wider than the class of classical IFSs' fractals (see \cite{S}).\\
Then, in 2014, Secelean (see \cite{Se}) considered mappings defined on $\ell_\infty(X)$, i.e. on $\ell_\infty$-sum of space $X$, and proved a version of the H--B theorem for families of such mappings.
However, in some sense the Secelean's approach is too wide - for example, it seems that the iteration procedure he considers is not a very natural counterpart of the Miculescu and Mihail case, and also some additional technical assumptions must be done to handle the theory.\\
The aim of our paper is to investigate some further aspects of Secelean's theory, and also to present and study a bit more restrictive setting (in which some problems will have more natural solutions). The~content is organized as follows:\\
In the next section we recall the frameworks of generalized IFSs due to Miculescu and Mihail (called GIFSs), and due to Secelean. We also recall a certain fixed point theorem which will be used in the main part, and prove some auxiliary results.\\
Section 3 is devoted to presenting basic setting of GIFSs$_\infty$ (that is, families of mappings defined on~$\ell_\infty(X)$). In particular, we introduce a more restrictive conditions than Secelean's one and prove the counterpart of the H--B theorem in such a case. We will also compare our result with the one of Secelean.\\
In Section 4 we will define a code space for GIFSs$_\infty$ and investigate some aspects of it.\\
Then, in Section 5 we prove that the relationships between GIFSs$_\infty$ and their code spaces are analogous to the classical case.\\
Finally, in Section 6 we present an example of a fractal generated by some GIFS$_\infty$ which cannot be obtained by any GIFS in the sense of Miculescu and Mihail.\\
Also, at each section, we will try to explain advantages of "our" more restrictive assumptions.
\section{Basic definitions and overview of known results}\label{section2}
\subsection{Generalized iterated function systems}
If $m$ is a natural number, then let $X^m$ be the Cartesian product of $m$ copies of $X$, endowed with the maximum metric. A finite family $\F~=~\{f_1,...,f_n\}$ of continuous mappings $X^m\to X$ is called a \emph{generalized iterated function systems of order~$m$} (GIFS in short). A GIFS $\F=\{f_1,...,f_n\}$ induces the map $\F:\K(X)^m\to \K(X)$ given by
$$
\F(K_0,...,K_{m-1}):=f_1(K_0\times...\times K_{m-1})\cup...\cup f_n(K_0\times...\times K_{m-1}).
$$
Finally, we say that $f:X^m\to X$ is \emph{a generalized Banach contraction}, if the Lipschitz constant $\on{Lip}(f)<1$.
Miculescu and Mihail proved the following version of the H--B theorem:
 \begin{theorem}\label{gifsinitial}
Assume that $(X,d)$ is complete, $m\in\N$ and $\F=\{f_1,...,f_n\}$ is a GIFS of order $m$ consisting of generalized Banach contractions. Then there is a unique set $A_\F\in\K(X)$ such that
\begin{equation}\label{ffinal1}
A_\F=\F(A_\F,...,A_\F)=\bigcup_{i=1}^nf_i(A_\F\times...\times A_\F).
\end{equation}
Moreover, for every $K_0,...,K_{m-1}\in \K(X)$, the sequence $(K_k)$ defined by \begin{equation}\label{ffinal2}K_{k+m}:=\F(K_{k},...,K_{k+m-1})\end{equation}
converges to $A_\F$ with respect to the Hausdorff-Pompeiu metric on $\K(X)$.
\end{theorem}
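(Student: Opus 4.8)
The plan is to read the statement as a fixed point assertion about the induced map $\F\colon\K(X)^m\to\K(X)$, where $\K(X)^m$ is equipped with the metric $\rho\bigl((K_0,\dots,K_{m-1}),(D_0,\dots,D_{m-1})\bigr):=\max_{0\le j\le m-1}H^d(K_j,D_j)$. First I would dispatch the routine preliminaries: $\F$ is well defined and continuous. Indeed, each $f_i$ is continuous, so $f_i(K_0\times\dots\times K_{m-1})\in\K(X)$ whenever $K_0,\dots,K_{m-1}\in\K(X)$, and a finite union of compacta is compact; continuity of $\F$ follows by composing the (Hausdorff-)continuity of $S\mapsto f_i(S)$ on $\K(X^m)$, of the product map $(K_0,\dots,K_{m-1})\mapsto K_0\times\dots\times K_{m-1}$, and of finite unions. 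Since $X$ is complete, $\K(X)$ and hence $(\K(X)^m,\rho)$ are complete.

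The key estimate, with $s:=\max_{1\le i\le n}\on{Lip}(f_i)<1$, is
$$
H^d\bigl(\F(K_0,\dots,K_{m-1}),\F(D_0,\dots,D_{m-1})\bigr)\ \le\ s\cdot\max_{0\le j\le m-1}H^d(K_j,D_j).
$$
It comes from three standard properties of the Hausdorff--Pompeiu metric: (i) $H^d\bigl(\bigcup_i A_i,\bigcup_i B_i\bigr)\le\max_i H^d(A_i,B_i)$ for finite families; (ii) a Lipschitz map $f\colon X^m\to X$ induces a map $S\mapsto f(S)$ on compact sets with the same Lipschitz constant (with respect to the relevant Hausdorff metrics); and (iii) for the maximum metric on $X^m$, the Hausdorff distance between $K_0\times\dots\times K_{m-1}$ and $D_0\times\dots\times D_{m-1}$ equals $\max_j H^d(K_j,D_j)$. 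Applying (ii) and (iii) to each $f_i$ and then (i) yields the inequality.

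Everything else I would deduce from this inequality. Fix arbitrary $K_0,\dots,K_{m-1}\in\K(X)$, let $(K_k)$ be defined by \eqref{ffinal2}, and set $\gamma_k:=H^d(K_k,K_{k+1})$ and $c_k:=\max\{\gamma_k,\dots,\gamma_{k+m-1}\}$. Applying the key inequality to $(K_k,\dots,K_{k+m-1})$ and $(K_{k+1},\dots,K_{k+m})$ gives $\gamma_{k+m}\le s\,c_k$; in particular $(c_k)$ is non-increasing, and then $\gamma_{k+m+j}\le s\,c_{k+j}\le s\,c_k$ for $0\le j\le m-1$, hence $c_{k+m}\le s\,c_k$. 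Thus $c_{jm}\le s^{\,j}c_0\to 0$, so $\gamma_k\to0$ geometrically and $\sum_k\gamma_k<\infty$; therefore $(K_k)$ is Cauchy in $\K(X)$ and converges to some $A\in\K(X)$. Passing to the limit in \eqref{ffinal2} and using continuity of $\F$ gives $A=\F(A,\dots,A)$. For uniqueness, if $A'$ also satisfies $A'=\F(A',\dots,A')$, the key inequality gives $H^d(A,A')\le s\,H^d(A,A')$, so $A=A'=:A_\F$; since the argument started from an arbitrary $m$-tuple, every sequence \eqref{ffinal2} converges to this same $A_\F$.

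The only step needing genuine care is the passage from the ``$\max$-contraction'' inequality — note $\F$ is \emph{not} a selfmap — to convergence of the iteration \eqref{ffinal2}: the right device is the non-increasing majorant $c_k$ of $m$ consecutive gaps, which contracts by a factor $s$ only after a full block of length $m$. Equivalently, one may introduce the shift operator $\Psi(K_0,\dots,K_{m-1}):=(K_1,\dots,K_{m-1},\F(K_0,\dots,K_{m-1}))$ on $(\K(X)^m,\rho)$, which is merely nonexpansive while a suitable iterate $\Psi^N$ is a genuine Banach contraction, and then invoke the corresponding abstract fixed point theorem. All remaining ingredients — well-definedness and continuity of the induced hyperspace map, together with facts (i)--(iii) — are standard.
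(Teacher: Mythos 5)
Your proposal is correct and follows essentially the same route as the paper: one shows that the induced map $\F\colon\K(X)^m\to\K(X)$ is a generalized Banach contraction for the maximum metric (via the three standard Hausdorff-metric facts you list, which are Lemma \ref{fl1} plus the finite analogue of Theorem \ref{Hinf=H}(3)) and then applies the generalized Banach fixed point principle, which is exactly Theorem \ref{new1} in the paper. The only difference is that you reprove that principle inline via the non-increasing block majorant $c_k$ (equivalently, the shift operator whose $m$-th iterate is a genuine contraction), whereas the paper simply cites Theorem \ref{new1}; your estimates $\gamma_{k+m}\le s\,c_k$ and $c_{k+m}\le s\,c_k$ are correct and suffice.
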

Observe that conditions (\ref{ffinal1}) and (\ref{ffinal2}) are natural counterparts of those in the classical Hutchinson--Barnsley theorem. However, if $m>1$, then they are more involving - for example, the procedure of iteration (\ref{ffinal2}) looks back for $m$ steps, not just one.\\
The set $A_\F$ in the above result is called \emph{the fractal} or \emph{the atractor} generated by $\F$.\\
The proof of Theorem \ref{gifsinitial} is based on the following counterpart of the Banach fixed point principle:
\begin{theorem}\label{new1}
If $X$ is a complete metric space, $m\in\N$ and $g:X^m\to X$ is a generalized Banach contraction, then $g$ has a unique generalized fixed point, i.e., a unique $x_*\in X$ such that
$$
g(x_*,...,x_*)=x_*.
$$
Moreover, for  every $x_0,...,x_{m-1}\in X$, the sequence $(x_k)$ defined by \begin{equation}\label{new2}x_{k+m}:=g(x_{k},...,x_{k+m-1})\end{equation}
converges to $x_*$.
\end{theorem}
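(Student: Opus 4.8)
The plan is to prove Theorem \ref{new1} by the classical telescoping-Cauchy-sequence argument, adapted to the delayed recursion \eqref{new2}. Put $s:=\operatorname{Lip}(g)<1$, fix arbitrary $x_0,\dots,x_{m-1}\in X$, let $(x_k)$ be the sequence defined by \eqref{new2}, and set $c_k:=d(x_k,x_{k+1})$. Applying the Lipschitz estimate for $g$ (with respect to the maximum metric on $X^m$) to the consecutive tuples $(x_k,\dots,x_{k+m-1})$ and $(x_{k+1},\dots,x_{k+m})$ immediately yields the basic recursive inequality
$$
c_{k+m}\ \le\ s\,\max\{c_k,c_{k+1},\dots,c_{k+m-1}\}\qquad(k\ge 0).
$$

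The next step is to extract geometric decay from this. Writing $M:=\max\{c_0,\dots,c_{m-1}\}$, a short induction on $k$ gives $c_k\le M s^{\lfloor k/m\rfloor}$ for all $k\ge 0$: the base cases $k=0,\dots,m-1$ are clear, and for the inductive step one splits the window of indices $k,k+1,\dots,k+m-1$ into the two blocks on which $\lfloor j/m\rfloor$ is constant, observes that the maximum of $Ms^{\lfloor j/m\rfloor}$ over that window equals $Ms^{\lfloor k/m\rfloor}$, and multiplies by $s$. Hence $c_k\le (M/s)\,\rho^{\,k}$ with $\rho:=s^{1/m}<1$, so $\sum_k c_k<\infty$ and $(x_k)$ is Cauchy. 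By completeness $x_k\to x_*$ for some $x_*\in X$; since $x_k,\dots,x_{k+m-1}$ all tend to $x_*$, continuity of $g$ applied to \eqref{new2} gives $g(x_*,\dots,x_*)=x_*$. Thus a generalized fixed point exists, and every iteration sequence converges to some generalized fixed point. Finally, if $g(x_*,\dots,x_*)=x_*$ and $g(y_*,\dots,y_*)=y_*$, then $d(x_*,y_*)=d\big(g(x_*,\dots,x_*),g(y_*,\dots,y_*)\big)\le s\,d(x_*,y_*)$, so $x_*=y_*$; this uniqueness simultaneously shows that all iteration sequences converge to one and the same point.

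The only mildly delicate point is the inductive bound $c_k\le Ms^{\lfloor k/m\rfloor}$ in the second step: the recursion controls $c_{k+m}$ only by the maximum of $c$ over a sliding window of length $m$, so one has to verify that this "max over a window" does not stall the geometric decay, which is exactly the block-splitting observation above. Once that estimate is in place, the rest is the routine Banach-type reasoning. An alternative route is to encode \eqref{new2} as a genuine selfmap $G$ of $X^m$, $G(x_0,\dots,x_{m-1}):=(x_1,\dots,x_{m-1},g(x_0,\dots,x_{m-1}))$, and to show that some iterate $G^{N}$ is a Banach contraction after a suitable remetrization of $X^m$; this also works but is less transparent and does not directly produce the convergence of the scalar sequence $(x_k)$, so I would prefer the direct computation sketched here.
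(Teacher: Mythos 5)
Your proof is correct. Note that the paper does not actually prove Theorem \ref{new1} at all --- it is recalled as a known result of Mihail and Miculescu (\cite{M}, \cite{MM2}) on which Theorem \ref{gifsinitial} rests --- but your telescoping argument is precisely the standard proof of it: the recursive inequality $c_{k+m}\le s\,\max\{c_k,\dots,c_{k+m-1}\}$, the inductive bound $c_k\le M s^{\lfloor k/m\rfloor}$ (your block-splitting of the sliding window is the right way to see that the maximum does not stall the decay), summability and hence Cauchyness, passage to the limit by continuity, and the one-line uniqueness estimate are all carried out correctly. The only cosmetic remark is that the rewritten bound $c_k\le (M/s)\rho^{k}$ degenerates when $\operatorname{Lip}(g)=0$, but in that trivial case $g$ is constant and $c_k=0$ for $k\ge m$, so nothing is lost.
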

\noindent We use Theorem \ref{new1} for the mapping $\F:\K(X)^m\to \K(X)$, which turns out to be a generalized Banach contraction.

Theorem \ref{gifsinitial} was proved by Mihail and Miculescu in \cite{M} and \cite{MM2} (see also \cite{MM1} for the case of compact $X$). Then, in \cite{SS1}, Strobin and Swaczyna extended it (and, in connection, also Theorem~\ref{new1}) to \emph{weaker} types of generalized contractions (i.e., which satisfy weaker contractive conditions) analogous to those introduced by Browder \cite{Br} and even to those by Matkowski \cite{Mat}.  Also, Strobin proved in \cite{S} that the class of GIFSs' fractals is essentially wider than the class of IFSs' fractals by showing appropriate examples on the plane.\\
In fact, it turns out that a great part of the classical IFS theory has a natural counterpart in this GIFS's setting - see references in mentioned papers and other articles of Miculescu, Mihail, Strobin, Secelean and their coauthors.\\
\subsection{Generalized iterated function systems on the $\ell_\infty$-sum - Secelean's approach}

Secelean in \cite{Se} (2014) considered GIFSs on $\ell_\infty$-sum of a space $X$. We will present here a particular version of his results. The difference is that we restrict here to the case of generalized Banach contractions, whereas \cite{Se} deals with more general contractive conditions. Nevertheless, the ideas are the same. Also, we consider here $I=\N^*=\{0,1,2,...\}$ and Secelean considered any $I\subset \N$. However (as was also remarked by Secelean), the case of infinite $I$ is essentially the same as $I=\N^*$.\\
 Given a metric space $(X,d)$, let $\ell_\infty(X)$ be the $\ell_\infty$-sum of $X$, i.e.,
\begin{equation*}
\Xinf:=\{(x_k)\subset X:(x_k)\;\mbox{is bounded}\},
\end{equation*}
and endow it with the supremum metric $d_s$:
\begin{equation}\label{se0}
d_{s}((x_k),(y_k)):=\sup\{d(x_k,y_k):k\in\N^*\}.
\end{equation}
Note that throughout the paper we mostly enumerate sequences by nonnegative integers. Thus when writing $(x_k)$, we automatically assume that $(x_k)=(x_k)_{k\in\N^*}$.\\
Let us notice that the notion of the $\ell_\infty$-sum of a family of spaces originates from functional analysis; see, e.g., \cite{LT}.
\begin{remark}\emph{Clearly, if $X$ is bounded, then $\ell_\infty(X)$ is just the Cartesian product: 
\begin{equation*}
\ell_\infty(X)=\prod_{k=0}^\infty X:=X\times X\times...,
\end{equation*}
but if $X$ is unbounded, then $\Xinf$ is a proper subspace of $\prod_{k=0}^\infty X$.}
\end{remark}
If $f:\ell_\infty(X)\to X$, then we define $\tilde{f}:X\to X$ by
\begin{equation}\label{se0,5}
\tilde{f}(x):=f(x,x,...).
\end{equation}
A point $x_*\in X$ will be called \emph{a generalized fixed point of $f$}, if $x_*$ is a fixed point of $\tilde{f}$, that is, if $f(x_*,x_*,...)=x_*$.\\
Secelean started with the fixed point theorem (see \cite[Theorem 3.1]{Se}):
\begin{theorem}\label{se1}
Assume that $X$ is a complete metric space and $f:\ell_\infty(X)\to X$ is such that the Lipschitz constant $\on{Lip}(f)<1$. Then $f$ has a unique generalized fixed point $x_*\in X$.\\
Moreover, for every $x=(x_k)\in \ell_\infty(X)$, the sequence $(y_k)$ defined by
\begin{equation}\label{aa2}
y_k:=f\left(\tilde{f}^{(k)}(x_0),\tilde{f}^{(k)}(x_1),\tilde{f}^{(k)}(x_2),...\right)
\end{equation} 
converges to $x_*$. More precisely, for any $k\in\N^*$,
$$
d(x_*,y_k)\leq \frac{\on{Lip}(f)^{k+1}}{1-\on{Lip}(f)}\sup\{d(\tilde{f}(x_i),x_i):i\in\N^*\}.
$$
\end{theorem}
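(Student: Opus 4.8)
The plan is to deduce everything from the classical Banach fixed point theorem applied to the selfmap $\tilde{f}$ of $X$. First I would observe that $\tilde{f}\colon X\to X$ is itself a Banach contraction with the same ratio: for $x,y\in X$,
\[
d(\tilde{f}(x),\tilde{f}(y))=d\big(f(x,x,\dots),f(y,y,\dots)\big)\le \on{Lip}(f)\,d_{s}\big((x,x,\dots),(y,y,\dots)\big)=\on{Lip}(f)\,d(x,y).
\]
Since $X$ is complete and $\on{Lip}(f)<1$, the map $\tilde{f}$ has a unique fixed point $x_*$, and by the very definition of a generalized fixed point, $x_*$ is the unique generalized fixed point of $f$. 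This disposes of the first assertion.

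For the convergence part, fix $x=(x_k)\in\Xinf$ and set $M:=\sup\{d(\tilde{f}(x_i),x_i):i\in\N^*\}$. I would first check that $M<\infty$: as $\tilde{f}$ is a contraction it carries the bounded set $\{x_i:i\in\N^*\}$ to a bounded set, so $\{x_i:i\in\N^*\}\cup\{\tilde{f}(x_i):i\in\N^*\}$ is bounded and $M$ is finite. Next I would invoke the standard quantitative form of the Banach principle — for a contraction $g$ with ratio $c$ and fixed point $x_*$ one has $d\big(g^{(k)}(z),x_*\big)\le \tfrac{c^{k}}{1-c}\,d(z,g(z))$ for every $z$ and $k$ — applied with $g=\tilde{f}$, $c=\on{Lip}(f)$ and $z=x_i$. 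This gives, uniformly in $i$,
\[
d\big(\tilde{f}^{(k)}(x_i),x_*\big)\le \frac{\on{Lip}(f)^{k}}{1-\on{Lip}(f)}\,d(\tilde{f}(x_i),x_i)\le \frac{\on{Lip}(f)^{k}}{1-\on{Lip}(f)}\,M.
\]

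In particular, for each $k$ the sequence $\big(\tilde{f}^{(k)}(x_0),\tilde{f}^{(k)}(x_1),\dots\big)$ stays within $d_{s}$-distance $\tfrac{\on{Lip}(f)^{k}}{1-\on{Lip}(f)}M$ of the constant sequence $(x_*,x_*,\dots)$; hence it lies in $\Xinf$ and $y_k$ is well defined. Since $f(x_*,x_*,\dots)=\tilde{f}(x_*)=x_*$, the Lipschitz estimate for $f$ then yields
\[
d(y_k,x_*)=d\Big(f\big(\tilde{f}^{(k)}(x_0),\tilde{f}^{(k)}(x_1),\dots\big),f(x_*,x_*,\dots)\Big)\le \on{Lip}(f)\cdot\frac{\on{Lip}(f)^{k}}{1-\on{Lip}(f)}\,M=\frac{\on{Lip}(f)^{k+1}}{1-\on{Lip}(f)}\,M,
\]
which is exactly the asserted bound, and letting $k\to\infty$ gives $y_k\to x_*$. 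The argument is essentially routine; the only points requiring a little care are checking that $M$ is finite and that the auxiliary sequences are bounded, so that $f$ may legitimately be evaluated on them, together with recalling the precise constant $\tfrac{c^{k}}{1-c}$ in the contraction estimate rather than a cruder one.
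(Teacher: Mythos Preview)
Your argument is correct. Note, however, that the paper does not give its own proof of this statement: it is quoted as \cite[Theorem~3.1]{Se} and used as background. Your route---observing that $\tilde f$ inherits the contraction ratio of $f$, invoking the classical Banach principle together with the standard a~priori bound $d(g^{(k)}(z),x_*)\le \tfrac{c^k}{1-c}\,d(z,g(z))$, and then passing the uniform estimate through one more application of $f$---is exactly the natural proof and is essentially what Secelean does. The small care points you flag (finiteness of $M$, membership of the iterated sequences in $\Xinf$) are the only genuine subtleties, and you handle them correctly.
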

On one hand, this result can be viewed as a generalization of the Banach fixed point theorem or Theorem \ref{new1}. On the other hand, it seems that the iteration procedure (\ref{aa2}) is not a very natural counterpart of (\ref{new2}).

Following \cite{Se}, we say that a mapping $f:\ell_\infty(X)\to X$ satisfies the condition (C1), if
\begin{center}
$(C1)\;\;\;$ for every $(K_k)\in\ell_\infty(\K(X))$, the closure of the image of the product $\overline{f(\prod_{k=0}^\infty K_k)}\in\K(X)$,
\end{center}
and condition (C2), if
\begin{center}
$(C2)\;\;\;$ for every $(K_k)\in\ell_\infty(\K(X))$, the image of the product ${f(\prod_{k=0}^\infty K_k)}\in\K(X).\;\;\;\;\;\;\;\;\;\;\;\;\;\;\;\;\;\;\;\;\;\;\;$
\end{center}
\begin{remark}\emph{If $(K_k)\in\KXinf$, then $\bigcup_{k=0}^\infty K_k$ is bounded, so $\prod_{k=0}^\infty K_k\subset\Xinf$. Hence the image $f(\prod_{k=0}^\infty K_k)$ is well defined.}
\end{remark}
 Finally, if $\F=\{f_1,...,f_n\}$ is a family of maps $\ell_\infty(X)\to X$ which satisfy (at least) (C1) condition, then we can define the map $\F:\ell_\infty(\K(X))\to\K(X)$ by
$$
\F(K_0,K_1,..):=\overline{f_1\left(\prod_{k=0}^\infty K_k\right)}\cup...\cup \overline{f_n\left(\prod_{k=0}^\infty K_k\right)}.
$$ 
Theorem \ref{se1} was used to obtain the following fractal theorem (see \cite[Theorem 3.7]{Se}):
\begin{theorem}\label{se2}
Assume that $X$ is a complete metric space and $\F=\{f_1,...,f_n\}$ is a family of maps which satisfy (C1) condition and such that $L(\F):=\max\{\on{Lip}(f_i):i=1,...,n\}<1$.
Then there is a~unique $A_\F\in\K(X)$ such that
$$
A_\F=\F(A_\F,A_\F,...).
$$
Moreover, for every $(K_k)\in\ell_\infty(\K(X))$, the sequence $(Y_k)$ defined by 
\begin{equation}\label{abc21}
Y_k:=\F\left(\tilde{\F}^{(k)}(K_0),\tilde{\F}^{(k)}(K_1),...\right)
\end{equation}
(where $\tilde{\F}(K):=\F(K,K,K,...)$), converges to $A_\F$ with respect to the Hausdorff-Pompeiu metric. More precisely, for every $k\in\N^*$,
$$
H^d(A_\F, Y_k)\leq \frac{L(\F)^{k+1}}{1-L(\F)}\sup\{H(\tilde{\F}(K_i),K_i):i\in\N^*\}.
$$
\end{theorem}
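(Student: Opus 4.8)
The plan is to reduce the statement to Theorem~\ref{se1} applied, in place of $f$, to the induced map $\F\colon\ell_\infty(\K(X))\to\K(X)$, with $\K(X)$ (which is complete because $X$ is) playing the role of the base space. Two things have to be checked before Theorem~\ref{se1} can be invoked: that $\F$ genuinely takes values in $\K(X)$, and that $\on{Lip}(\F)\le L(\F)<1$ with respect to the supremum metric (built from $H^d$) on $\ell_\infty(\K(X))$ and $H^d$ on $\K(X)$. The first is immediate from condition (C1): for $(K_k)\in\ell_\infty(\K(X))$ the union $\bigcup_k K_k$ is bounded, so $\prod_k K_k\subset\ell_\infty(X)$, each $\overline{f_i(\prod_k K_k)}$ belongs to $\K(X)$ by (C1), and a finite union of compact sets is compact.

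The core of the proof is the Lipschitz estimate
\[
H^d\bigl(\F(K_0,K_1,\dots),\F(D_0,D_1,\dots)\bigr)\le L(\F)\,\sup_{k\in\N^*}H^d(K_k,D_k)
\]
for all $(K_k),(D_k)\in\ell_\infty(\K(X))$. To obtain it I would first establish the ``product estimate'': given $(x_k)\in\prod_k K_k$, compactness of each $D_k$ lets one pick $y_k\in D_k$ with $d(x_k,y_k)=\on{dist}(x_k,D_k)\le H^d(K_k,D_k)\le\sup_jH^d(K_j,D_j)$, and since $(D_k)$ is bounded, $(y_k)\in\ell_\infty(X)$ with $d_s((x_k),(y_k))\le\sup_jH^d(K_j,D_j)$. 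Consequently, for every $i$,
\[
d\bigl(f_i((x_k)),f_i((y_k))\bigr)\le L(\F)\,d_s((x_k),(y_k))\le L(\F)\,\sup_jH^d(K_j,D_j),
\]
and $f_i((y_k))\in f_i(\prod_k D_k)\subset\F(D_0,D_1,\dots)$. Thus every point of $f_i(\prod_k K_k)$ lies within $L(\F)\sup_jH^d(K_j,D_j)$ of the compact (hence closed) set $\F(D_0,D_1,\dots)$; the same therefore holds for every point of $\overline{f_i(\prod_k K_k)}$, and so for every point of $\F(K_0,K_1,\dots)$. Interchanging the roles of $(K_k)$ and $(D_k)$ yields the displayed Hausdorff bound.

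Once these two facts are in place, Theorem~\ref{se1} applied to $f:=\F$ provides a unique $A_\F\in\K(X)$ with $\tilde\F(A_\F)=A_\F$, i.e.\ $A_\F=\F(A_\F,A_\F,\dots)$, which is the desired equation together with its uniqueness. The convergence of the sequence $(Y_k)$ in (\ref{abc21}) and the quantitative bound $H^d(A_\F,Y_k)\le\frac{L(\F)^{k+1}}{1-L(\F)}\sup_i H^d(\tilde\F(K_i),K_i)$ are then exactly the ``moreover'' part of Theorem~\ref{se1}, read off for $f=\F$ and the starting point $x=(K_k)$.

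I expect the only real obstacle to be the Lipschitz estimate, and within it the bookkeeping that keeps all the auxiliary sequences inside $\ell_\infty(X)$ together with the small but essential observation that $\F(D_0,D_1,\dots)$ is closed, so that an ``$\varepsilon$-close to $\F(D_0,\dots)$'' bound valid on $f_i(\prod_k K_k)$ survives passing to the closure on the $K$-side. Everything else is a direct translation of Theorem~\ref{se1}.
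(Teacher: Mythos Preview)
Your proposal is correct and follows exactly the route indicated in the paper: after stating Theorem~\ref{se2} the paper simply remarks that ``the result follows from Theorem~\ref{se1} since it turns out that $\on{Lip}(\F)<1$,'' without spelling out the Lipschitz estimate. You have supplied precisely those missing details---that $\F$ lands in $\K(X)$ via (C1) and that $\on{Lip}(\F)\le L(\F)$ via the product/closest-point argument---and then invoked Theorem~\ref{se1}, so your argument is a faithful expansion of the paper's one-line justification.
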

The set $A_\F$ is called \emph{a fractal generated by $\F$}. Note that
the result follows from Theorem \ref{se1} since it turns out that $\on{Lip}(\F)<1$.
\begin{remark}\emph{Let us remark that condition (C1) is important since, in general, the set $\overline{f(\prod_{k=0}^\infty K_k)}$ may not be compact, even if $\on{Lip}(f)<1$. The reason is that the product $\prod_{k=0}^\infty K_k$ may not be compact in the space $(\Xinf,d_s)$ (later we will discuss this issue in details).
}\end{remark}
\subsection{Another fixed point theorem for maps defined on $\ell_\infty(X)$}
Together with Jachymski, in \cite{JMS} we proved another version of Theorem \ref{new1} for maps defined on $\Xinf$. Let us present it.\\
At first, consider alternative metrics on the space $\ell_\infty(X)$. Namely, if $q\in(0,1]$, then set
\begin{equation}\label{s-metric}d_{s,q}(x,y) := \operatorname{sup}\{q^kd(x_k, y_k):k\in\N^*\} \;\; \textrm{ for any } x=(x_k), y=(y_k)\in \Xinf.\end{equation}
If additionally $q<1$ and $p\in[1,\infty)$, the we also define
\begin{equation}\label{p-metric}d_{p,q}(x,y) := \left( \sum_{k=0}^{\infty} q^kd^p(x_k, y_k) \right)^{1/p} \textrm{ for any } x=(x_k), y=(y_k)\in \Xinf . \end{equation}
Clearly, the metric $d_s$ considered by Secelean is exactly $d_{s,1}$. Note that if in (\ref{p-metric}) we assume that $q=1$, then we can get the value $\infty$. Hence, if we write $d_{p,q}$, then we will automatically assume that $q<1$ and $p\in[1,\infty)$ and if we write $d_{s,q}$, we will assume that $q\in(0,1]$, unless stated otherwise.

The following result lists basic properties of the space $\ell_\infty(X)$ endowed with $d_{p,q}$ or $d_{s,q}$ (see \cite[Propositions 2.2, 2.4 and 2.8]{JMS}).

\begin{proposition}\label{fp1}
In the above frame, assume that $q<1$ and $p\geq 1$. Then
\begin{itemize}
\item[(i)] $d_{s,q}\leq d_{p,q^p}$;
\item[(ii)] if $q\leq q'\leq 1$, then $d_{s,q}\leq d_{s,q'}$;
\item[(iii)] if $q^{{1}/{p}}<q'\leq 1$, then $d_{p,q} \leq \left(1-\frac{q}{(q')^p}\right)^{-1/p} d_{s, q'}$;
\item[(iv)] if $X$ is bounded, then the topology on $\Xinf$ ($=\prod_{k=0}^\infty X$), induced by any of metrics $d_{s,q}$ and $d_{p,q}$ is exactly the Tychonoff product topology.
\end{itemize}\end{proposition}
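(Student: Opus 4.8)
The plan is to settle parts (i)--(iii) by direct elementary inequalities and to deduce part (iv) from (iii) together with two soft facts about coordinate projections. For (i), I would rewrite the right-hand side: since $(q^{p})^{k}=q^{pk}=(q^{k})^{p}$, one has
\[
d_{p,q^{p}}(x,y)^{p}=\sum_{k=0}^{\infty}\bigl(q^{k}d(x_k,y_k)\bigr)^{p},
\]
so $d_{p,q^{p}}(x,y)$ is the $\ell^{p}$-norm and $d_{s,q}(x,y)$ the $\ell^{\infty}$-norm of one and the same nonnegative sequence $\bigl(q^{k}d(x_k,y_k)\bigr)_{k\in\N^*}$; the inequality $\|\cdot\|_{\ell^{\infty}}\le\|\cdot\|_{\ell^{p}}$, valid for $p\ge1$, gives (i). For (ii), from $0<q\le q'\le1$ we get $q^{k}\le(q')^{k}$ for every $k\in\N^*$, hence $q^{k}d(x_k,y_k)\le(q')^{k}d(x_k,y_k)$ termwise, and passing to the supremum over $k$ yields (ii).

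For (iii) the key observation is that the hypothesis $q^{1/p}<q'$ is equivalent to $r:=q/(q')^{p}<1$, so the geometric series $\sum_{k=0}^{\infty}r^{k}$ converges to $(1-r)^{-1}$. I would then estimate, for each $k\in\N^*$,
\[
q^{k}d^{p}(x_k,y_k)=r^{k}\bigl((q')^{k}d(x_k,y_k)\bigr)^{p}\le r^{k}\,d_{s,q'}(x,y)^{p},
\]
sum over $k$, and take $p$-th roots to obtain $d_{p,q}(x,y)\le(1-r)^{-1/p}\,d_{s,q'}(x,y)$, which is exactly the asserted inequality.

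For (iv) I would prove the two inclusions between topologies separately. That the Tychonoff product topology is contained in the topology of each of $d_{s,q}$ and $d_{p,q}$ follows because every coordinate projection $\pi_{k}\colon\Xinf\to X$ is Lipschitz: the termwise estimates give $d(x_k,y_k)\le q^{-k}d_{s,q}(x,y)$ and $d(x_k,y_k)\le q^{-k/p}d_{p,q}(x,y)$, so each $\pi_{k}$ is continuous, whence the initial topology generated by the $\pi_{k}$'s --- i.e.\ the product topology on $\Xinf=\prod_{k=0}^{\infty}X$ --- is the coarser one. For the reverse inclusion I would first handle $d_{s,q}$: put $D:=\operatorname{diam}(X)<\infty$, fix $x\in\Xinf$ and $\varepsilon>0$, choose $N\in\N^*$ with $q^{N}D<\varepsilon/2$ (possible since $q<1$), and check that the basic product-open set $\{y\in\Xinf:d(x_k,y_k)<\varepsilon/2\text{ for }k=0,\dots,N-1\}$ lies in the $d_{s,q}$-ball about $x$ of radius $\varepsilon$: on coordinates $k<N$ one has $q^{k}d(x_k,y_k)\le d(x_k,y_k)<\varepsilon/2$, and on coordinates $k\ge N$ one has $q^{k}d(x_k,y_k)\le q^{N}D<\varepsilon/2$, so the supremum is $<\varepsilon$. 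Hence the $d_{s,q}$-topology coincides with the product topology. Finally, for $d_{p,q}$ (here $q<1$, so $q^{1/p}<1$) I would pick $q'$ with $q^{1/p}<q'<1$ and invoke (iii): $d_{p,q}\le C\,d_{s,q'}$ for some constant $C$, so the $d_{p,q}$-topology is contained in the $d_{s,q'}$-topology, which has just been identified with the product topology; combined with the projection argument this gives (iv) for $d_{p,q}$ as well.

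I expect the tail estimate in (iv) to be the only step requiring real care, and it uses both $q<1$ and the boundedness of $X$ essentially: with $q=1$ the metric $d_{s,1}$ induces a strictly finer topology (already on $\prod_{k}[0,1]$, which is $d_{s,1}$-noncompact but Tychonoff-compact), while for unbounded $X$ one has $\Xinf\subsetneq\prod_{k}X$, so there is no product topology on all of $\Xinf$ to compare with. Everything else reduces to the $\ell^{\infty}$--$\ell^{p}$ inequality and the summation of a convergent geometric series; I would also record at the start that, for bounded $X$, each of these metrics is finite on $\Xinf$ (by comparison with $\sum_{k}q^{k}D^{p}$, respectively with $D$), which is used implicitly throughout.
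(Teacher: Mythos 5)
Your proof is correct and complete; note that the paper itself gives no proof of this proposition, importing it from \cite[Propositions 2.2, 2.4 and 2.8]{JMS}, so there is nothing internal to compare against. Your arguments — the $\ell^\infty$-versus-$\ell^p$ comparison of the single sequence $(q^k d(x_k,y_k))_k$ for (i), termwise monotonicity for (ii), the geometric-series estimate with ratio $q/(q')^p<1$ for (iii), and for (iv) the combination of Lipschitz projections with the tail bound $q^N\operatorname{diam}(X)<\varepsilon/2$ (reducing the $d_{p,q}$ case to the $d_{s,q'}$ case via (iii)) — are the standard ones and are carried out without gaps.
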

\begin{remark}\label{remark1}\emph{
(1) Part (iv) reveals probably the crucial difference between  metrics $d_{s,q}$ and $d_{p,q}$ for $q<1$, and the metric $d_{s,1}$ considered by Secelean. Later we will see that this difference has many consequences.\\ 
(2) At the end of \cite{Se}, Secelean considered also the metric $d_{1,\frac{1}{2}}$ and observed that such metric has nice properties. In fact, some of our observations are related to that remark.}
\end{remark}
Now we present the fixed point theorem from \cite{JMS}. We need, however, some further background.
Let $f:\Xinf\to X$, and $x=(x_k)\in \Xinf$. Define the sequence $(x^k)\subset X$ by the following inductive formula:
\begin{equation}\label{fili4}
x^1:=f(x_0,x_1,...),
\end{equation}
\begin{equation}\label{fili5}
x^{k+1}:=f(x^k,...,x^1,x_0,x_1,...),\;\;k\geq 1.
\end{equation}
Observe that this iteration procedure has many similarities with (\ref{new2}). Hence we say that the sequence $(x^k)$ is \emph{the sequence of generalized iterates of $f$} of the sequence $x$. \\
Now if $f:\Xinf\to X$, then by $L_{s,q}(f)$, $L_{p,q}(f)$ let us denote the Lipschitz constants of $f$ with respect to metrics $d_{s,q}$ and $d_{p,q}$, respectively (of course, we allow them to be equal to $\infty$).\\
The following result (\cite[Theorem 3.7]{JMS}) is a counterpart of Theorem \ref{new1}; in fact, as was proved in the last section of \cite{JMS}, it implies Theorem \ref{new1}.
\begin{theorem}\label{Banach_inf}
Let $(X, d)$ be a complete metric space and let $f: \Xinf\to X$ be such that one of the conditions holds
\begin{equation}\label{fp1,5}
L_{s,q}(f)<1\;\;\;\mbox{for some}\;q\in(0,1);
\end{equation}
\begin{equation}\label{fp1,1}
L_{p,q}(f)<(1-q)^{1/p}\;\;\;\mbox{for some}\;q\in(0,1),\;p\in[1,\infty).
\end{equation}
Then $f$ has a unique generalized fixed point $x^*\in X$.\\ Moreover, for any $(x_k)\in \Xinf$, the sequence of generalized iterations $(x^k)$ converges to $x^*$. More precisely:
\begin{itemize}
\item[-] if $L_{s,q}(f)<1$, then
$$d(x^k, x^*)\leq L_{s,q}(f) \frac{(\max\{L_{s,q}(f), q \})^{k-1}}{1-\max\{L_{s,q}(f), q \}} d_{s,q}\left((x_0,x_1,x_2...),(x^1,x_0,x_1,...)\right);$$
\item[-] if $L_{p,q}(f)<(1-q)^{1/p}$, then
$$d(x^k, x^*)\leq L_{p,q}(f)\frac{((L_{p,q}(f))^p+q)^{\frac{k-1}{p}}}{1-((L_{p,q}(f))^p+q)^{\frac{1}{p}}} d_{p,q}\left((x_0,x_1,x_2...),(x^1,x_0,x_1,...)\right).$$

\end{itemize}
\end{theorem}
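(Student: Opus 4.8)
The plan is to reduce the statement to the classical Banach contraction principle through an auxiliary \emph{prepend-and-shift} operator on $\Xinf$. I would set $G\colon\Xinf\to\Xinf$,
\[
G\bigl((x_k)\bigr):=\bigl(f((x_k)),x_0,x_1,x_2,\dots\bigr),
\]
which is well defined since prepending one point to a bounded sequence leaves it bounded. An easy induction using (\ref{fili4})--(\ref{fili5}) gives
\[
G^{(k)}(x)=\bigl(x^k,x^{k-1},\dots,x^1,x_0,x_1,x_2,\dots\bigr),\qquad x^k=f\bigl(G^{(k-1)}(x)\bigr)\quad(k\ge1),
\]
and $\xi$ is a fixed point of $G$ exactly when $\xi=(x^*,x^*,\dots)$ for some $x^*\in X$ with $\tilde f(x^*)=x^*$. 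So generalized fixed points of $f$ correspond bijectively to fixed points of $G$, and the sequence of generalized iterates of $f$ is just the sequence of $0$-th coordinates of the orbit $\bigl(G^{(k)}(x)\bigr)$.

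Next I would compute the Lipschitz constant of $G$ with respect to the relevant metric. Singling out the $0$-th coordinate from the rest yields, in the $d_{s,q}$ case,
\[
d_{s,q}\bigl(G(x),G(y)\bigr)=\max\bigl\{d(f(x),f(y)),\,q\,d_{s,q}(x,y)\bigr\}\le\max\{L_{s,q}(f),q\}\,d_{s,q}(x,y),
\]
so under (\ref{fp1,5}) the operator $G$ is a Banach contraction with ratio $\lambda:=\max\{L_{s,q}(f),q\}\in(0,1)$; and in the $d_{p,q}$ case,
\[
d_{p,q}\bigl(G(x),G(y)\bigr)^p=d(f(x),f(y))^p+q\,d_{p,q}(x,y)^p\le\bigl((L_{p,q}(f))^p+q\bigr)d_{p,q}(x,y)^p,
\]
so that condition (\ref{fp1,1}) is precisely what makes the ratio $\mu:=\bigl((L_{p,q}(f))^p+q\bigr)^{1/p}$ strictly smaller than $1$.

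The delicate point is that for $q<1$ the space $(\Xinf,d_{s,q})$ (or $(\Xinf,d_{p,q})$) need not be complete: the uniform bound on coordinates is destroyed by the weighting (for instance, with $X=\R$ and $q=\tfrac12$ the sequence $x^{(n)}:=(0,1,\dots,n,n,n,\dots)$ is $d_{s,1/2}$-Cauchy but converges coordinatewise to an unbounded sequence), so one cannot simply quote Banach's theorem on the whole space. I would circumvent this by fixing the starting point $x=(x_k)$, fixing some $a\in X$, writing $L$ for the relevant Lipschitz constant, and working on the cylinder
\[
S:=\Bigl\{(y_k)\in\Xinf:\sup_{k}d(y_k,a)\le M\Bigr\},
\]
with $M$ taken large enough (depending on $\sup_k d(x_k,a)$, $L$ and $d(\tilde f(a),a)$) that $x\in S$ and $G(S)\subseteq S$; the inclusion is guaranteed by the Lipschitz estimate $d(f(y),a)\le L\,d_\bullet\bigl(y,(a,a,\dots)\bigr)+d(\tilde f(a),a)$, since $d_\bullet\bigl(y,(a,a,\dots)\bigr)$ is bounded by a fixed multiple of $M$ for $y\in S$. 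On $S$ the tails $q^kd(y_k,a)$ (resp.\ $\sum_{k\ge K}q^kd(y_k,a)^p$) are uniformly small, so $(S,d_{s,q})$ (resp.\ $(S,d_{p,q})$) \emph{is} complete. Banach's theorem applied to $G|_S$ then produces a unique fixed point $\xi\in S$, which by the first paragraph is $(x^*,x^*,\dots)$ with $\tilde f(x^*)=x^*$, so $x^*$ is a generalized fixed point of $f$, and $G^{(k)}(x)\to\xi$. Uniqueness of the generalized fixed point in all of $X$ follows separately from $d(x^*,y^*)=d\bigl(f(x^*,x^*,\dots),f(y^*,y^*,\dots)\bigr)\le L\,d_\bullet\bigl((x^*,x^*,\dots),(y^*,y^*,\dots)\bigr)$ together with $d_{s,q}\bigl((x^*,x^*,\dots),(y^*,y^*,\dots)\bigr)=d(x^*,y^*)$ (resp.\ $d_{p,q}\bigl((x^*,x^*,\dots),(y^*,y^*,\dots)\bigr)=(1-q)^{-1/p}d(x^*,y^*)$ and $L_{p,q}(f)(1-q)^{-1/p}<1$).

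Finally, the convergence estimate is the usual contraction tail bound $d_\bullet\bigl(G^{(k-1)}(x),\xi\bigr)\le\frac{\lambda^{k-1}}{1-\lambda}\,d_\bullet\bigl(x,G(x)\bigr)$, so, using $x^k=f\bigl(G^{(k-1)}(x)\bigr)$ and $x^*=f(\xi)$,
\[
d(x^k,x^*)\le L\,d_\bullet\bigl(G^{(k-1)}(x),\xi\bigr)\le L\,\frac{\lambda^{k-1}}{1-\lambda}\,d_\bullet\bigl(x,G(x)\bigr),
\]
and since $d_\bullet\bigl(x,G(x)\bigr)=d_\bullet\bigl((x_0,x_1,x_2,\dots),(x^1,x_0,x_1,\dots)\bigr)$ this is exactly the asserted bound (with $\lambda$ replaced by $\mu$ in the $d_{p,q}$ case). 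I expect the main obstacle to be precisely this failure of completeness of $(\Xinf,d_{s,q})$ and $(\Xinf,d_{p,q})$ for $q<1$ — one has to isolate a subset that is simultaneously $G$-invariant and complete in the weighted metric — together with keeping the coordinate bookkeeping for $G^{(k)}$ straight; the remaining steps are routine.
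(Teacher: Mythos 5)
Your proposal is correct, but note that the paper itself contains no proof of this statement: Theorem \ref{Banach_inf} is recalled verbatim from \cite{JMS} (their Theorem 3.7), so there is no in-text argument to compare against. Your reduction is a clean and self-contained route to the result. The prepend-and-shift operator $G(x):=(f(x),x_0,x_1,\dots)$ does exactly what is needed: the identities $d_{s,q}(G(x),G(y))=\max\{d(f(x),f(y)),\,q\,d_{s,q}(x,y)\}$ and $d_{p,q}(G(x),G(y))^p=d(f(x),f(y))^p+q\,d_{p,q}(x,y)^p$ are exact, they show that hypotheses (\ref{fp1,5}) and (\ref{fp1,1}) are precisely the conditions making $G$ a Banach contraction with ratio $\max\{L_{s,q}(f),q\}$, resp.\ $((L_{p,q}(f))^p+q)^{1/p}$, and composing the a priori Banach estimate with one more application of $f$ reproduces the stated error bounds to the letter --- so your construction in effect reverse-engineers the mechanism behind those constants. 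The one genuinely delicate point, the failure of completeness of $(\Xinf,d_{s,q})$ and $(\Xinf,d_{p,q})$ for $q<1$, you identify correctly (your example $(0,1,\dots,n,n,\dots)$ is indeed $d_{s,1/2}$-Cauchy with unbounded coordinatewise limit) and resolve correctly: the cylinder $S$ is complete because coordinatewise limits of Cauchy sequences stay in $S$ and the uniformly small tails upgrade coordinatewise convergence to convergence in the weighted metric, while $G(S)\subseteq S$ follows from $d(f(y),a)\le L\,d_\bullet(y,(a,a,\dots))+d(\tilde f(a),a)$ together with $d_\bullet(y,(a,a,\dots))\le M$, resp.\ $\le (1-q)^{-1/p}M$, and the hypothesis guaranteeing that $L$ times this factor is $<1$. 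Global uniqueness via the constant-sequence computation closes the argument. I see no gap.
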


\begin{remark}\label{abc7}\emph{
The condition (\ref{fp1,5}) looks more natural than (\ref{fp1,1}). However, as was observed in \cite[Remark 3.8]{JMS}, they are equivalent. We will need an extended version of this observations, so we give here short explanation:\\
By Proposition \ref{fp1}(i) it follows that 
\begin{equation}\label{abc22}L_{p,q^p}(f)\leq L_{s,q}(f).\end{equation} Moreover it is easy to see that for $q<1$,
\begin{equation}\label{fp2}
\lim_{p\to\infty}(1-q^p)^{1/p}=1.
\end{equation}
Hence if $L_{s,q}(f)<1$ (where $q<1$), then for some $p\in[1,\infty)$, $L_{p,q^p}(f)<(1-q^p)^{1/p}$. \\
Conversely, by Proposition \ref{fp1}(iii) we get
\begin{equation}\label{lukasz1}L_{s,q'}(f) \leq \left(1-\frac{q}{(q')^p}\right)^{-1/p} L_{p,q}(f) \textrm{ for $q' > q^{1/p}$}.\end{equation}
Hence, if $L_{p,q}(f) < (1-q)^{1/p}$, then for $q'$ close enough to $1$, we have $L_{s, q'} (f)< 1$.
}
\end{remark}

\begin{remark}\label{se4}\emph{
By Proposition \ref{fp1}(ii), we have that $L_{s,1}(f)\leq L_{s,q}(f)$ for $q<1$. 
Hence if the function $f$ satisfies (\ref{fp1,5}) (or, equivalently, (\ref{fp1,1})) then it also satisfies assumptions of Theorem \ref{se1} (i.e., $L_{s,1}(f)<1$), but the converse need not be true. Indeed, the function $f:\ell_\infty([0,1])\to[0,1]$ defined by $$f((x_k)):=\frac{1}{2}\sup\{x_k:k\in\N^*\}$$ satisfies $L_{s,1}(f)=\frac{1}{2}<1$, but the sequence of generalized iterates $(x^k)$ does not converge for any sequence $(x_k)$ so that $x_i>0$ for some $i\in\N^*$  (see \cite[Example 3.11]{JMS} for details). 
}\end{remark}
It turns out that, under assumptions of Theorem \ref{Banach_inf}, the fixed point $x^*$ of the map $f$ is a limit of fixed points of certain restrictions of $f$ (see \cite[Theorem 3.13]{JMS}):
\begin{theorem}\label{fp5}
Assume that $f:\Xinf\to X$ satisfies (\ref{fp1,5}) (or, equivalently, (\ref{fp1,1})). Let ${x}\in X$ and $m\in\N$, and define ${f}_m:X^m\to X$ by
$$
{f}_m(x_0,...,x_{m-1}):=f(x_0,...,x_{m-1},{x},{x},...).
$$
Then the functions $f_m$ satisfy the assumptions of Theorem \ref{new1}, and the sequence $(x^*_m)$ of generalized fixed points of~$f_m$s' converges to $x^*$, a generalized fixed point of $f$.
\end{theorem}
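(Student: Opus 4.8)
The plan is to establish the two assertions separately: first that every $f_m$ is a generalized Banach contraction on $X^m$ (so that Theorem~\ref{new1} applies and produces $x^*_m$), and then that $x^*_m\to x^*$ — in fact with an explicit geometric rate in $m$.

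For the first part I would invoke Remark~\ref{abc7} to reduce to the case that $f$ satisfies (\ref{fp1,5}), and fix $q\in(0,1)$ with $L:=L_{s,q}(f)<1$. Given $\bar x=(x_0,\dots,x_{m-1})$, $\bar y=(y_0,\dots,y_{m-1})\in X^m$, I would form $x':=(x_0,\dots,x_{m-1},{x},{x},\dots)$ and $y':=(y_0,\dots,y_{m-1},{x},{x},\dots)$ in $\Xinf$. Since these agree from index $m$ on and $q<1$,
\[
d_{s,q}(x',y')=\sup_{0\le k<m}q^{k}d(x_k,y_k)\le\max_{0\le k<m}d(x_k,y_k),
\]
and the right-hand side is exactly the maximum metric on $X^m$; hence $d\big(f_m(\bar x),f_m(\bar y)\big)=d\big(f(x'),f(y')\big)\le L\,d_{s,q}(x',y')$ shows $\on{Lip}(f_m)\le L<1$. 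As $X$, hence $X^m$ with the maximum metric, is complete, Theorem~\ref{new1} then gives a unique generalized fixed point $x^*_m$ of $f_m$, characterised by $f(x^*_m,\dots,x^*_m,{x},{x},\dots)=x^*_m$.

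For the convergence I would let $x^*$ be the generalized fixed point of $f$ provided by Theorem~\ref{Banach_inf}, so that $f(x^*,x^*,\dots)=x^*$, and compare it with the equation satisfied by $x^*_m$ via the $d_{s,q}$-Lipschitz estimate for $f$:
\[
d(x^*_m,x^*)\le L\,d_{s,q}\big((x^*_m,\dots,x^*_m,{x},{x},\dots),(x^*,x^*,\dots)\big).
\]
On the right, the coordinates with index $k<m$ contribute $q^{k}d(x^*_m,x^*)$ and those with $k\ge m$ contribute $q^{k}d({x},x^*)$, so (again using $q<1$) the supremum equals $\max\{d(x^*_m,x^*),\,q^{m}d({x},x^*)\}$. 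Since $L<1$, this maximum cannot be the first term unless $d(x^*_m,x^*)=0$; hence in every case
\[
d(x^*_m,x^*)\le L\,q^{m}\,d({x},x^*)\longrightarrow 0\quad(m\to\infty),
\]
which is the assertion.

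The whole argument is short and I do not expect a genuine obstacle. The two points that need a little care are the self-referential inequality in the last display — where the term $d(x^*_m,x^*)$ on the right is absorbed precisely because $L<1$ — and the elementary but pivotal observation that inserting into $f$ a sequence with a constant tail collapses the weighted supremum $d_{s,q}$ into something dominated by the maximum metric on $X^m$. The latter is exactly where the hypothesis (\ref{fp1,5}) with $q<1$ (rather than Secelean's $L_{s,1}(f)<1$, cf.\ Remark~\ref{se4}) is needed, and it is what makes each $f_m$ a bona fide generalized Banach contraction.
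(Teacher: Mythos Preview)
Your proof is correct. Note, however, that the present paper does not actually prove Theorem~\ref{fp5}: it is quoted from \cite[Theorem 3.13]{JMS} without argument, so there is no in-paper proof to compare against directly. That said, the paper remarks (just after Theorem~\ref{abc24}) that the formulation in \cite{JMS} comes with an estimate on the speed of the convergence $x^*_m\to x^*$, and your argument recovers exactly such an estimate, namely $d(x^*_m,x^*)\le L\,q^{m}\,d(x,x^*)$. Your self-referential trick --- bounding $d(x^*_m,x^*)$ by $L$ times a maximum containing $d(x^*_m,x^*)$ itself and then using $L<1$ to exclude that alternative --- is clean and is precisely the kind of absorption argument one expects here; it is likely close in spirit to the original proof in \cite{JMS}.
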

\begin{remark}\label{abc8}\emph{The map $f$ from Remark \ref{se4} shows that the thesis of Theorem \ref{fp5} does not hold if we just assume that $L_{s,1}(f)<1$, i.e., under the assumptions of Theorem \ref{se1} - see \cite[Example 3.14]{JMS}.}
\end{remark} 


\section{Basic properties and auxiliary constructions}
In this section we make some initial observations and introduce general constructions which will be used later.

\subsection{The space $\ell_\infty(\K(X))$}
Let $(X,d)$ be a metric space. Then on one hand we can consider the space $\K(\Xinf)$ with the Hausdorff-Pompeiu metric induced by considered metric on $\Xinf$ (i.e., $H^{d_{p,q}}$ or $H^{d_{s,q}}$), and on the other, we can consider the space $\KXinf$ with the metrics induced by the Hausdorff-Pompeiu metric on $\K(X)$ (i.e., $H^d_{p,q}$ or $H^d_{s,q}$).\\
As was observed by Secelean, 
the space $(\K(\Xinf),H^{d_{s,1}})$ is far from $(\KXinf,H^d_{s,1})$ since for a set $K\subset X$, the product $\prod_{k=0}^\infty K:=K\times K\times...$ belongs to $\K(\Xinf,d_{s,1})$ iff $K$ is a singleton, but on the other hand, $(K,K,...)\in\KXinf$ if $K\in\K(X)$. In connection with it, the condition (C1) was needed in Theorem \ref{se2}.\\
As we will see in the next result, such problems do not appear in case of metrics $d_{p,q}$ and $d_{s,q}$ when $q<1$. Also, for completeness, we extend the observation of Secelean. 
\begin{theorem}\label{Hinf=H}
Let $(X, d)$ be a metric space.\\
(1) If $(K_k)$ is a sequence of subsets of $X$, $p\in[1,\infty)$ and $q\in(0,1)$, then the following conditions are equivalent:
\begin{itemize}
\item[(i)] $(K_k)\in \KXinf$;
\item[(ii)] $\prod_{k=0}^\infty K_k\in \K(\Xinf,d_{s,q})$;
\item[(iii)] $\prod_{k=0}^\infty K_k\in \K(\Xinf,d_{p,q})$.
\end{itemize}
(2) If $(K_k)$ is a sequence of subsets of $X$, then the following conditions are equivalent:
\begin{itemize}
\item[(i)] each $K_k$ is compact and the sequence of diameters $\on{diam}_d(K_k)\to 0$;
\item[(ii)] $\prod_{k=0}^\infty K_k\in \K(\Xinf,d_{s,1})$.
\end{itemize}
(3) For every sequences $(K_k),(D_k)\in \KXinf$:\\
- if $q\in(0,1)$ and $p\in[1,\infty)$, then we have:
$$H^{d_{p,q}}\left(\prod_{k=0}^{\infty}K_k, \prod_{k=0}^{\infty}D_k\right) \leq H^d_{p,q}((K_k), (D_k)),$$
- if $q\in(0,1]$, then we have
$$H^{d_{s,q}}\left(\prod_{k=0}^{\infty}K_k, \prod_{k=0}^{\infty}D_k\right)=H^d_{s,q}((K_k), (D_k))  $$
\end{theorem}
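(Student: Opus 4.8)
The plan is to isolate one elementary fact about the weighted metrics and deduce all three parts from it; I shall use repeatedly that the projection $\pi_k\colon(x_j)\mapsto x_k$ satisfies $d(x_k,y_k)\le q^{-k}d_{s,q}(x,y)$ and $d(x_k,y_k)\le q^{-k/p}d_{p,q}(x,y)$, so each $\pi_k$ is Lipschitz, hence continuous, from $(\Xinf,d_{s,q})$ and from $(\Xinf,d_{p,q})$ to $X$. \textbf{Key Lemma.} \emph{If each $K_k$ is a nonempty compact subset of $X$ and $\bigcup_k K_k$ is bounded, then $\prod_{k=0}^\infty K_k\subseteq\Xinf$; moreover, for every $q\in(0,1)$ and $p\in[1,\infty)$, all of the metrics $d_{s,q}$ and $d_{p,q}$ induce the Tychonoff product topology on $\prod_{k=0}^\infty K_k$, and $\prod_{k=0}^\infty K_k$ is compact in $(\Xinf,d_{s,q})$ and in $(\Xinf,d_{p,q})$.} The inclusion holds because a selection from sets lying in one ball is bounded. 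By continuity of the $\pi_k$, convergence in $d_{s,q}$ or $d_{p,q}$ forces coordinatewise convergence; conversely, if $\bigcup_k K_k$ lies in a ball of radius $R$, then for $x,y\in\prod_k K_k$ one has $q^kd(x_k,y_k)\le 2Rq^k$ and $q^kd(x_k,y_k)^p\le(2R)^pq^k$, so---because $q<1$---for large $N$ the tails $\sup_{k\ge N}q^kd(x_k,y_k)$ and $\sum_{k\ge N}q^kd(x_k,y_k)^p$ are uniformly small, while a finite initial segment is governed by coordinatewise closeness; hence coordinatewise convergence implies convergence in both metrics. Since these metrics thus induce the product topology on $\prod_k K_k$, that set is compact by Tychonoff's theorem (a countable product of compacta); alternatively one argues directly that $(\prod_k K_k,d_{s,q})$ and $(\prod_k K_k,d_{p,q})$ are complete (a Cauchy sequence is coordinatewise Cauchy in the complete space $K_k$, and the coordinatewise limit is the metric limit by the same tail bound) and totally bounded (cover a long initial product $K_0\times\dots\times K_{N-1}$ by finitely many small boxes, the tail being uniformly small). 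This extends Proposition \ref{fp1}(iv) from the products $\prod_k X$ to products $\prod_k K_k$.

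Given the Lemma, Part (1) follows at once: if $(K_k)\in\KXinf$, then each $K_k$ is nonempty compact with $\bigcup_k K_k$ bounded, so $\prod_k K_k\in\K(\Xinf,d_{s,q})\cap\K(\Xinf,d_{p,q})$, which is (i)$\Rightarrow$(ii) and (i)$\Rightarrow$(iii). Conversely, membership of $\prod_k K_k$ in $\K(\Xinf,d_{s,q})$ (or in $\K(\Xinf,d_{p,q})$) forces it to be a nonempty subset of $\Xinf$, hence all $K_k$ are nonempty and $\bigcup_k K_k$ is bounded---otherwise one builds a selection $(z_k)$ with $z_{k_j}\in K_{k_j}$ and $d(z_{k_j},x_0)\to\infty$ for a fixed basepoint $x_0$, which is unbounded---while compactness and continuity of the $\pi_k$ give $K_k=\pi_k(\prod_j K_j)$ compact; so $(K_k)\in\KXinf$, and the equivalences close up. For Part (2), (ii)$\Rightarrow$(i) yields each $K_k$ compact as before, and if $\on{diam}_d(K_k)\not\to 0$ one picks indices $k_1<k_2<\dots$ and points $a_{k_i},b_{k_i}\in K_{k_i}$ with $d(a_{k_i},b_{k_i})>\delta$ for some fixed $\delta>0$; fixing $c_k\in K_k$ for all other $k$ and letting $z^{(i)}$ have coordinate $b_{k_i}$ at place $k_i$, $a_{k_j}$ at places $k_j$ with $j\ne i$, and $c_k$ elsewhere, the points $z^{(i)}$ lie in $\prod_k K_k$ and are pairwise at $d_{s,1}$-distance $\ge\delta$ (any two already differ by more than $\delta$ in one coordinate), so $\prod_k K_k$ is not totally bounded, contradicting (ii). For (i)$\Rightarrow$(ii) one re-runs the direct completeness/total-boundedness argument of the Lemma with $q=1$, the factor $q^N$ now being replaced by $\sup_{k\ge N}\on{diam}_d(K_k)$, which tends to $0$ and makes the $d_{s,1}$-tail $\sup_{k\ge N}d(x_k,y_k)$ uniformly small. (Here one must read (i) together with $\prod_k K_k\subseteq\Xinf$, i.e.\ $\bigcup_k K_k$ bounded, since vanishing diameters alone do not keep the product inside $\Xinf$; for a single set this is Secelean's original remark.)

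Part (3) is a direct computation with the Hausdorff metric, using that each $K_k,D_k$ is compact so nearest points are attained. For ``$\le$'': given $x\in\prod_k K_k$, pick $y_k\in D_k$ with $d(x_k,y_k)=\on{dist}(x_k,D_k)\le H^d(K_k,D_k)$; then $y=(y_k)\in\prod_k D_k$ with $d_{s,q}(x,y)\le\sup_k q^kH^d(K_k,D_k)$ and $d_{p,q}(x,y)^p\le\sum_k q^kH^d(K_k,D_k)^p$, and symmetrically with the roles of $K_k$ and $D_k$ exchanged; taking the supremum over $x$ in $\prod_k K_k$ and over $y$ in $\prod_k D_k$ gives $H^{d_{s,q}}(\prod_k K_k,\prod_k D_k)\le H^d_{s,q}((K_k),(D_k))$ and $H^{d_{p,q}}(\prod_k K_k,\prod_k D_k)\le H^d_{p,q}((K_k),(D_k))$. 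For the reverse inequality in the $d_{s,q}$ case, fix $j$ and assume without loss of generality that $H^d(K_j,D_j)=\on{dist}(a^*,D_j)$ for some $a^*\in K_j$ (the sub-case where the Hausdorff distance is realized from the $D_j$ side is symmetric); any $x\in\prod_k K_k$ with $x_j=a^*$ satisfies $d_{s,q}(x,y)\ge q^jd(a^*,y_j)\ge q^jH^d(K_j,D_j)$ for every $y\in\prod_k D_k$, so $H^{d_{s,q}}(\prod_k K_k,\prod_k D_k)\ge q^jH^d(K_j,D_j)$, and the supremum over $j$ yields equality. The matching lower bound fails for $d_{p,q}$---hence only ``$\le$'' is claimed---because it would require the worst coordinate deviations to be realizable simultaneously and consistently from one side, which cannot be arranged in general.

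The main obstacle here is bookkeeping rather than any hard single step: pinning down precisely why the weight $q^k$ with $q<1$ (and, for $q=1$, the hypothesis $\on{diam}_d(K_k)\to0$) collapses the $\sup$- and $\ell^p$-type metrics to the product topology, and tracking which hypothesis---$\bigcup_k K_k$ bounded versus merely vanishing diameters---is needed in each implication. One should also note that for $q=1$ the product $\prod_k K_k$ need not be compact, so in Part (3) for $q=1$ the symbol $H^{d_{s,1}}$ is to be understood through the $\sup$--$\inf$ formula for bounded sets; this is harmless for the computation above.
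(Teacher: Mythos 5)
Your proof is correct and follows essentially the same route as the paper's: continuity of the coordinate projections to get compactness of each $K_k$, a selection argument for boundedness of $\bigcup_k K_k$, identification of $d_{s,q}$/$d_{p,q}$ with the product topology on the product of compacta (the paper gets this by citing Proposition \ref{fp1}(iv) for $\ell_\infty(\bigcup_k K_k)$ rather than re-deriving it), and the same sup--inf computations for part (3). Your parenthetical that (2)(i)$\Rightarrow$(2)(ii) implicitly requires $\prod_k K_k\subseteq\Xinf$ (vanishing diameters alone do not force the product into $\Xinf$ when $X$ is unbounded) is a fair observation about an imprecision the paper's statement and proof pass over silently.
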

(in case $q=1$, $\prod_{k=0}^\infty K_k$ may not be compact, but we clearly can calculate the value $H^{d_{p,q}}(\prod_{k=0}^{\infty}K_k, \prod_{k=0}^{\infty}D_k)$).
\begin{proof}Ad(1) We will prove just the equivalence $(i)\Leftrightarrow (iii)$ (the equivalence $(i)\Leftrightarrow (ii)$ goes in the same way; in fact, it follows from the equivalence $(i)\Leftrightarrow (iii)$ and Proposition \ref{fp1}).\\
Let $\prod_{k=0}^\infty K_k \in \K(\Xinf,d_{p,q})$. 
Since the convergence in $d_{p,q}$ implies the convergence at each coordinate (see \cite[Proposition 2.3]{JMS}), we have that each projection $\on{proj}_k:\Xinf\to X$ is continuous. In particular, this means that sets $K_k$ are compact. 
Now we show that $\bigcup_{k=0}^\infty K_k$ is bounded. Assume on the contrary that $\bigcup_{k=0}^\infty K_k$ is unbounded. Since each $K_k$ is bounded (as a compact set), we can choose an unbounded sequence $(x_k)$ such that $x_k\in K_k$ for each $k=0,1,...$. In particular, $(x_k)\in\prod_{k=0}^\infty K_k\subset \Xinf$. This is a contradiction, which means that $\bigcup_{k=0}^\infty K_k$ is bounded and, in turn, $(K_k)$ is a bounded sequence of compact sets. Thus $(K_k)\in \KXinf$.\\
Now let $(K_k) \in \KXinf$ and set $K:=\bigcup_{k=0}^\infty K_k$. Then $K$ is bounded (as the sequence $(K_k)$ is bounded), hence by Proposition~\ref{fp1}(iv), the topology on $(\ell_\infty(K),d_{p,q})$ is the Tychonoff product topology. Now since each $K_k\subset K$, we have that $\prod_{k=0}^\infty K_k$ is compact in $(\ell_\infty(K),d_{p,q})$ which also means that $\prod_{k=0}^\infty K_k\in \K(\Xinf,d_{p,q})$.\\
Ad(2) Let $(K_k)$ be a sequence of subsets of $X$. Assume first that each $K_k$ is compact and $\on{diam}_d(K_k)\to 0$. By definition of the metric $d_{s,1}$ it can be easily seen that the topology on $\prod_{k=0}^\infty K_k$ is exactly the Tychonoff topology. Hence $\prod_{k=0}^\infty K_k$ is compact subset of $(\Xinf,d_{s,1})$.\\
Now assume that $\prod_{k=0}^\infty K_k\in\K(\Xinf,d_{s,1})$. Again we can observe that each projection $\on{proj}_k:~\Xinf\to~X$ is continuous. Hence each $K_k$ is compact.
Now suppose that $\on{diam}_d(K_k)$ does not converge to $0$, and choose $\varepsilon>0$ and $n_0<n_1<...$ so that $\on{diam}_d(K_{n_k})> \varepsilon$. For every $n=0,1,...$, choose $x^n,y^n\in~K_n$ such that $d(x^n,y^n)\geq \frac{1}{2}\on{diam}_d(K_n)$ and, finally, for any $k=0,1,...$, set $z_k=(z_k^n)$ so that $z_k^n:=\left\{\begin{array}{ccc}y^{n_k}&\mbox{if}&n=n_k\\x^n&\mbox{if}&n\neq n_k\end{array}\right.$. Then $(z_k)$ does not have a convergent subsequence. It is a contradiction.\\
Ad(3)
Let $(K_k),(D_k)\in \KXinf$. At first, consider the metric $d_{p,q}$. Take any $(x_k)\in\prod_{k=0}^{\infty}K_k$. Then we have
$$
\inf_{(y_k)\in \prod_{k=0}^\infty D_k}\left( \sum_{k=0}^{\infty} q^k d^p(x_k, y_k) \right)^{1/p}=\left( \sum_{k=0}^{\infty} q^k \inf_{y_k\in D_k}d^p(x_k, y_k) \right)^{1/p}=  \left( \sum_{k=0}^{\infty} q^k\left(\inf_{y_k\in D_k}d(x_k, y_k)\right)^p\right)^{1/p}
$$
$$
\leq\left( \sum_{k=0}^{\infty} q^k(H^d)^p(K_k,D_k)\right)^{1/p}=H^d_{p,q}((K_k),(D_k)).
$$
Hence
$$
\sup_{(x_k)\in\prod_{k=0}^\infty K_k}\left(
\inf_{(y_k)\in \prod_{k=0}^\infty D_k}\left( \sum_{k=0}^{\infty} q^k d^p(x_k, y_k) \right)^{1/p}\right)\leq H^d_{p,q}((K_k),(D_k)).
$$
In the same way we show
$$
\sup_{(y_k)\in \prod_{k=0}^\infty D_k}\left(\inf_{(x_k)\in \prod_{k=0}^\infty K_k}\left( \sum_{k=0}^{\infty} q^k d^p(x_k, y_k) \right)^{1/p}\right)\leq H^d_{p,q}((K_k),(D_k)),
$$
which gives us 
$$H^{d_{p,q}}\left(\prod_{k=0}^\infty K_k,\prod_{k=0}^\infty D_k\right)\leq H^d_{p,q}((K_k),(D_k)).$$
Now consider the metric $d_{s,q}$. In a similar way as in the previous case, we can show the inequality
$$
H^{d_{s,q}}\left(\prod_{k=0}^\infty K_k,\prod_{k=0}^\infty D_k\right)\leq H^d_{s,q}((K_k),(D_k)).
$$
We will show the opposite inequality. Let $\varepsilon > 0$. There exists $k_0\in\N^*$ such that
$$
H^d_{s,q}((K_k),(D_k)) - \varepsilon \leq q^{k_0}H^d(K_{k_0},D_{k_0}).
$$
Now let $(x_k)\in \prod_{k=0}^\infty K_k$. Then, clearly,
$$
\inf_{(y_k)\in \prod_{k=0}^\infty D_k}d_{s,q}((x_k),(y_k))\geq q^{k_0}\inf_{y\in D_{k_0}} d(x_{k_0},y)
$$
so
$$
\sup_{(x_k)\in \prod_{k=0}^\infty K_k}\left(\inf_{(y_k)\in \prod_{k=0}^\infty D_k}d_{s,q}((x_k),(y_k))\right)\geq q^{k_0}\sup_{x\in K_{k_0}}\left(\inf_{y\in D_{k_0}}d(x,y)\right), $$
which gives us 
$$
H^{d_{s,q}}\left(\prod_{k=0}^\infty K_k,\prod_{k=0}^\infty D_k\right) \geq q^{k_0}\sup_{x\in K_{k_0}}\left(\inf_{y\in D_{k_0}}d(x,y)\right).$$
In the same manner, we show that
$$
\sup_{(y_k)\in \prod_{k=0}^\infty D_k}\left(\inf_{(x_k)\in \prod_{k=0}^\infty K_k}d_{s,q}((x_k),(y_k))\right)\geq q^{k_0}\sup_{y\in D_{k_0}}\left(\inf_{x\in K_{k_0}}d(x,y)\right),
$$
from which we obtain that: 
$$H^{d_{s,q}}\left(\prod_{k=0}^\infty K_k,\prod_{k=0}^\infty D_k\right) \geq q^{k_0}\sup_{y\in D_{k_0}}\left(\inf_{x\in K_{k_0}}d(x,y)\right).$$
Hence we arrive to
$$H^{d_{s,q}}\left(\prod_{k=0}^\infty K_k,\prod_{k=0}^\infty D_k\right) \geq q^{k_0} H(K_{k_0}, D_{k_0}) \geq H^d_{s,q}((K_k),(D_k)) - \varepsilon.$$
Since $\varepsilon$ was taken arbitrarily we finally get 
$$H^{d_{s,q}}\left(\prod_{k=0}^\infty K_k,\prod_{k=0}^\infty D_k\right) \geq H^d_{s,q}((K_k),(D_k)).$$
\end{proof}

\begin{example}\emph{
Define
$$
K_k:=\left\{\begin{array}{cc}[0,1]&\textrm{ if }k=0,\\
\{0\}&\textrm{ if }k\geq 1, \end{array}\right.\;\;\;\;\;\;
D_k:=\left\{\begin{array}{cc}[0,1]&\textrm{ if }k=1,\\
\{0\}&\textrm{ if }k\neq 1. \end{array}\right. 
$$
Clearly, $\prod_{k=0}^\infty K_k,\prod_{k=0}^\infty D_k\in \K(\ell_\infty(\R))$, and it is easy to see that
$
H^d_{p,q}((K_k),(D_k))=(1+q)^{1/p}
$ and $H^{d_{p,q}}(\prod_{k=0}^\infty K_k,\prod_{k=0}^\infty D_k)=\max\{1, q^{1/p}\} = 1$.
Hence in the case of the metric $d_{p,q}$, the inequality in the previous result cannot be replaced by the equality.}
\end{example}

\subsection{The sets $(X_k)$, spaces $(X^\infty_k)$, and metrics $d_{k,s,q}$ and $d_{k,p,q}$}$\;$\\
If $X$ is a nonempty set, then let $(X_k)$ be a sequence of sets defined by the following inductive formula:
\begin{equation}\label{abc0}
\begin{array}{ll}X_0:=X,\\
X_{k+1}:=\prod_{i=0}^\infty X_k:=X_k\times X_k\times...,\;\;\;\;\;k\geq 0.
\end{array}
\end{equation}
Now let $(X,d)$ be a metric space. Set
\begin{equation}\label{abc11}
X_0^\infty:=X\;\;\;\mbox{and}\;\;\;d_{0,s,1}:=d.
\end{equation}
Assume that we already defined the spaces $(X_k^\infty,d_{k,s,1})$ for some $k\in\N^*$. Define
\begin{equation}\label{abc12}
X^\infty_{k+1}:=\ell_\infty(X_k^\infty)\;\;\;\mbox{and}\;\;\;d_{k+1,s,1}:=(d_{k,s,1})_{s,1}.
\end{equation}
Now for $q\in(0,1)$ and $p\in[1,\infty)$, we will define metrics $d_{k,p,q}$ and $d_{k,s,q}$ on $X_k^\infty$.\\
On $X^\infty_0=X$, we set
\begin{equation}\label{abc13}
d_{0,p,q}:=d_{0,s,q}:=d.
\end{equation}
Now since $X^\infty_1=\ell_\infty(X)$, we can define
\begin{equation}\label{abc13,5}
d_{1,p,q}:=(d_{0,p,q})_{p,q}\;\;\mbox{ and }\;\;d_{1,s,q}:=(d_{0,s,q})_{s,q}.
\end{equation}
By Proposition \ref{fp1}(ii),(iii), we see that $d_{1,p,q}\leq \frac{1}{(1-q)^{1/p}}d_{1,s,1}$ and $d_{1,s,q}\leq d_{1,s,1}$.\\
Now assume that for some $k\in\N$, we defined metrics $d_{k,p,q}$ and $d_{k,s,q}$ on $X^\infty_k$ according to (\ref{s-metric}) and (\ref{p-metric}), i.e., they are defined by
\begin{equation}\label{abc14,5}
d_{k,p,q}:=(d_{k-1,p,q})_{p,q}\;\;\mbox{ and }\;\;d_{k,s,q}:=(d_{k-1,s,q})_{s,q}.
\end{equation}
and which satisfy $d_{k,p,q}\leq \frac{1}{(1-q)^{k/p}}d_{k,s,1}$ and $d_{k,s,q}\leq d_{k,s,1}$.\\ 
Then if a sequence $(x_i)$ of elements of $X^\infty_k$ is bounded with respect to $d_{k,s,1}$, then it is bounded with respect to metrics $d_{k,p,q}$ and $d_{k,s,q}$. Hence we can define $d_{k+1,p,q}$ and $d_{k+1,s,q}$ on $X^\infty_{k+1}$, according to (\ref{s-metric}) and (\ref{p-metric}), or, in other words, by
\begin{equation}\label{abc14}
d_{k+1,p,q}:=(d_{k,p,q})_{p,q}\;\;\mbox{ and }\;\;d_{k+1,s,q}:=(d_{k,s,q})_{s,q}.
\end{equation}
Then by Proposition \ref{fp1}(i) and the inductive assumption, we have for every $x=(x_i),y=(y_i)\in X^\infty_{k+1}$,
$$
d_{k+1,p,q}(x,y)=(d_{k,p,q})_{p,q}(x,y)=\left(\sum_{i\in\N^*}q^id^p_{k,p,q}(x_i,y_i)\right)^{1/p}\leq  \left(\sum_{i\in\N^*}q^i\frac{1}{(1-q)^k}d^p_{k,s,1}(x_i,y_i)\right)^{1/p}$$ $$=\frac{1}{(1-q)^{k/p}}(d_{k,s,1})_{p,q}(x,y)\leq \frac{1}{(1-q)^{k/p}}\frac{1}{(1-q)^{1/p}}(d_{k,s,1})_{s,1}(x,y)=\frac{1}{(1-q)^{(k+1)/p}}d_{k+1,s,1}(x,y),
$$
and similarly
$$
d_{k+1,s,q}(x,y)\leq d_{k+1,s,1}(x,y).
$$
\begin{remark}\label{finalaa1}\emph{
At each step of the above construction we saw that a sequence $(x_i)\subset X^\infty_k$ bounded with respect to $d_{k,s,1}$, is also bounded with respect to $d_{k,p,q}$ and $d_{k,s,q}$, $q<1$. This means that the space $(X^\infty_{k+1},d_{k+1,s,q})$ is a metric subspace of $(\ell_\infty(X^\infty_k,d_{k,s,q}),(d_{k,s,q})_{s,q})$, and similarly, the space $(X^\infty_{k+1},d_{k+1,p,q})$ is a metric subspace of $(\ell_\infty(X^\infty_k,d_{k,p,q}),(d_{k,p,q})_{p,q})$. In fact, if $X$ is unbounded, they are proper subspaces.\\
However, in the case when $(X,d)$ is bounded, then by an easy induction we can see that for all $k\geq 1$, $X^\infty_k=X_k$, i.e, $X^\infty_k$ is exactly the product $\prod_{i=0}^\infty X_{k-1}=\prod_{i=0}^\infty X^\infty_{k-1}$, and hence all these spaces coincide.}
\end{remark}

Now we provide some natural description of considered metrics $d_{k,p,q}$ and $d_{k,s,q}$. We shall start with some notation. 
If $x = (x_0, x_1, ...) \in X_1$ then for any $i\in\N^*$, define 
\begin{equation}\label{abc25}x^{(i)} := x_i \in X_0.
\end{equation}
Assume that for some $k\in\N$, all $x\in X_k$, all $0\leq j\leq k-1$ and all $i_0,...,i_j\in\N^*$, we defined all $x^{(i_0, ..., i_j)}$. 
Then, for any $x = (x_0, x_1, ...) \in X_{k+1}$, any $0\leq j\leq k$ and any $i_0,...,i_j\in\N^*$, we define:
\begin{equation}\label{abc26}\left. \begin{array}{ll} 
x^{(i_0)} := x_{i_0}, \textrm{ if $j=0$}; \\
x^{(i_0, ..., i_j)} := \left( x_{i_0}\right)^{(i_1, ..., i_j)}= \left( x^{(i_0)}\right)^{(i_1, ..., i_j)}, \textrm{ if $j\geq 1$}. 
\end{array} \right.
\end{equation}
{This inductive formula can be understood as taking one by one the coefficient in the ,,nested'' sequence of $x$'s. For example, if $x=((x_0^0,x^0_1,...),(x^1_0,x^1_1,...),...)\in X_2$, then $x^{(2)}=(x^2_0,x^2_1,...)$ and $x^{(2,3)}=x^2_3$, etc.}
The following lemma lists basic properties of spaces $(X^\infty_k,d_{k,p,q})$ and $(X^\infty_k,d_{k,s,q})$.
\begin{lemma}\label{filiplemma1}
In the above frame,
\begin{itemize}
\item[(i)] For any $k\in\N$ and $x,y \in X^{\infty}_k$,
\begin{equation*}\label{11fil22} d_{k,p,q} (x, y) = \left( \sum_{i_0, ..., i_{k-1} \in \N^*} q^{(i_0+...+i_{k-1})} d^p\left(x^{(i_0, ..., i_{k-1})}, y^{(i_0, ..., i_{k-1})}\right) \right)^{1/p},
\end{equation*}
\begin{equation*}\label{11fil2aaa} d_{k,s,q} (x, y) =\sup\{q^{(i_0+...+i_{k-1})} d\left(x^{(i_0, ..., i_{k-1})}, y^{(i_0, ..., i_{k-1})}\right):i_0, ..., i_{k-1} \in \N^*\}.
\end{equation*}
\item[(ii)] If $(X,d)$ is bounded, then for every $k\in\N$, 
$X^\infty_k=X_k.
$
\item[(iii)] If $D\subset X$ is bounded, then for every $k\in\N^*$, $D_k\subset X_k^{\infty}$. Moreover,
\begin{equation*}\label{11fil1}
\on{diam}_{d_{k,p,q}}(D_k)= \left(1-q\right)^{-\frac{k}{p}}\on{diam}_d(D)\;\;\;\;\;\;\;\mbox{and}\;\;\;\;\;\;\;
\on{diam}_{d_{k,s,q}}(D_k)=\on{diam}_{d}(D).     
\end{equation*}
\item[(iv)] If $(X,d)$ is bounded and $q<1$, then for every $k\in\N$, the topology on $X^\infty_k=X_k$ induced by any of metrics $d_{k,p,q}$ and $d_{k,s,q}$ is exactly the Tychonoff product topology. 
\end{itemize}
\end{lemma}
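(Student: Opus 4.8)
The plan is to prove all four items by induction on $k$, with item (i) carrying the main weight and (ii)--(iv) following from it together with Proposition \ref{fp1}. The guiding observation is that the recursive definition (\ref{abc14,5})--(\ref{abc14}) of the metrics $d_{k,p,q}$, $d_{k,s,q}$ mirrors exactly the recursive unfolding of the nested-coordinate notation (\ref{abc25})--(\ref{abc26}): after $k$ stages one is simply comparing genuine points of $X$, namely the $x^{(i_0,\dots,i_{k-1})}$, indexed by $k$-tuples $(i_0,\dots,i_{k-1})\in(\N^*)^k$, each carrying the weight $q^{i_0+\dots+i_{k-1}}$.

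For (i) I would induct on $k$. The base case $k=1$ is immediate from $d_{1,p,q}=(d)_{p,q}$, $d_{1,s,q}=(d)_{s,q}$ and $x^{(i_0)}=x_{i_0}$. For the inductive step, let $x=(x_0,x_1,\dots)$, $y=(y_0,y_1,\dots)\in X^\infty_{k+1}$; by (\ref{abc14}) we have $d_{k+1,p,q}(x,y)^p=\sum_{i_0\in\N^*}q^{i_0}\,d_{k,p,q}(x_{i_0},y_{i_0})^p$, and substituting the inductive formula for each $d_{k,p,q}(x_{i_0},y_{i_0})^p$, while using $(x_{i_0})^{(i_1,\dots,i_k)}=x^{(i_0,i_1,\dots,i_k)}$ from (\ref{abc26}), turns the right-hand side into an iterated sum of nonnegative terms, which by nonnegativity collapses to the single sum over $(i_0,\dots,i_k)\in(\N^*)^{k+1}$ with weight $q^{i_0+\dots+i_k}$. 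The $d_{s,q}$ case is identical with $\sup$ in place of $\sum$, using that an iterated supremum equals the supremum over the product index set. The one point needing care is keeping the index bookkeeping consistent with (\ref{abc26}); the rearrangement of the sums (and suprema) is harmless since every term is $\geq 0$. The same argument with $q=1$ also yields $d_{k,s,1}(x,y)=\sup\{d(x^{(i_0,\dots,i_{k-1})},y^{(i_0,\dots,i_{k-1})}):i_0,\dots,i_{k-1}\in\N^*\}$, which I use below.

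Item (ii) is Remark \ref{finalaa1}: if $(X,d)$ is bounded then $\ell_\infty(X)=\prod_{k=0}^\infty X$, i.e. $X^\infty_1=X_1$; and once $X^\infty_k=X_k$ with $(X^\infty_k,d_{k,s,1})$ bounded -- which follows from the $q=1$ formula above, giving $\on{diam}_{d_{k,s,1}}(X_k)=\on{diam}_d(X)<\infty$ -- the same fact gives $X^\infty_{k+1}=\ell_\infty(X^\infty_k)=\prod_{i=0}^\infty X^\infty_k=X_{k+1}$. For (iii), fix a bounded $D\subset X$. By induction $D_{k-1}\subset X^\infty_{k-1}$ and $\on{diam}_{d_{k-1,s,1}}(D_{k-1})=\on{diam}_d(D)<\infty$, so every element of $D_k=\prod_{i=0}^\infty D_{k-1}$ is a $d_{k-1,s,1}$-bounded sequence in $X^\infty_{k-1}$, hence also $d_{k-1,p,q}$- and $d_{k-1,s,q}$-bounded by the construction preceding the lemma; thus $D_k\subset X^\infty_k$. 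Since every $x\in D_k$ has all nested coordinates in $D$, the formulas of (i) give, for $x,y\in D_k$, $d_{k,s,q}(x,y)\leq\on{diam}_d(D)$ and $d_{k,p,q}(x,y)^p\leq\big(\sum_{i\in\N^*}q^i\big)^k\on{diam}_d(D)^p=(1-q)^{-k}\on{diam}_d(D)^p$. Equality is attained at the elements of $D_k$ whose nested coordinates are all equal to a fixed $a\in D$, resp.\ $b\in D$: (i) then gives $d_{k,s,q}=d(a,b)$ (only the all-zeros tuple contributes to the $\sup$, as $q<1$) and $d_{k,p,q}^p=(1-q)^{-k}d(a,b)^p$; taking the supremum over $a,b\in D$, with $t\mapsto t^p$ monotone, yields the asserted diameters.

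Finally, (iv) follows by induction from Proposition \ref{fp1}(iv) (and $X^\infty_k=X_k$ by (ii)). For $k=1$, boundedness of $X$ and Proposition \ref{fp1}(iv) say that the $d_{1,p,q}$- and $d_{1,s,q}$-topology on $\ell_\infty(X)=\prod_{k=0}^\infty X$ is the Tychonoff product topology. Assuming the claim for $k$, part (iii) with $D=X$ shows $(X^\infty_k,d_{k,p,q})$ and $(X^\infty_k,d_{k,s,q})$ are bounded, so Proposition \ref{fp1}(iv) applied with base space $(X^\infty_k,d_{k,p,q})$, resp.\ $(X^\infty_k,d_{k,s,q})$, shows that the $d_{k+1,p,q}$-, resp.\ $d_{k+1,s,q}$-topology on $X^\infty_{k+1}=\ell_\infty(X^\infty_k)$ is the Tychonoff product of countably many copies of $X^\infty_k$ with its $d_{k,p,q}$-, resp.\ $d_{k,s,q}$-topology; by the inductive hypothesis that is the Tychonoff product topology on $X_k$, and a countable product of copies of a countable product is again the corresponding countable product, i.e.\ the Tychonoff product topology on $X_{k+1}$. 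The step I expect to be the real obstacle is (i): aligning the nested-index notation with (\ref{abc26}) and cleanly justifying the flattening of the iterated sums and suprema. Once (i) is in place, (ii)--(iv) are essentially formal consequences of it and of Proposition \ref{fp1}.
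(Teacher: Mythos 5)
Your proposal is correct and follows essentially the same route as the paper, which only sketches the argument: (i) by induction unfolding the recursive definitions of $d_{k,p,q}$ and $d_{k,s,q}$ against the nested-coordinate notation, (ii) via Remark \ref{finalaa1}, (iii) from (i)--(ii) applied to the bounded subspace $D$, and (iv) from Proposition \ref{fp1}(iv) by induction. Your write-up simply supplies the details (Tonelli-type rearrangement, the extremal pairs realizing the diameters, associativity of the product topology) that the paper leaves implicit.
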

\begin{proof}Part (i) can be easily proved by induction. Part (ii) was already commented in the previous remark. Part (iii) follows from definition, parts (i) and (ii), and the fact (which can be proved by an easy induction) that if $(Y,d)$ is a metric subspace of $(X,d)$, then $(Y^\infty_k,d_{k,s,q})$ is a subspace of $(X^\infty_k,d_{k,s,q})$, and $(Y^\infty_k,d_{k,p,q})$ is a subspace of $(X^\infty_k,d_{k,p,q})$ for all $k\geq 0$.\\
Finally, (iv) follows from (ii),(iii), Proposition \ref{fp1}(iv) and Remark \ref{finalaa1}.
\end{proof}

\section{GIFSs of infinite order and the Hutchinson--Barnsley theorem}

Throughout the section, $(X,d)$ will be a metric space, and $d_{p,q},d_{s,q}$ will be metrics defined as in the Section~\ref{section2}.
\begin{definition}\emph{
A finite family $\F=\{f_1,...,f_n\}$ of maps $\Xinf\to X$ which satisfy (C1) will be called a }generalized iterated function system of infinite order \emph{  (GIFS$_\infty$ in short).\\
As was already observed, every GIFS$_\infty\;\F=\{f_1,...,f_n\}$ generates the map $\F:\KXinf\to \KX$ defined by
$$
\F(K_0,K_1,...):=\overline{f_1\left(\prod_{k=0}^\infty K_k\right)}\cup...\cup \overline{f_n\left(\prod_{k=0}^\infty K_k\right)}.
$$
}
\end{definition}
\begin{remark}\label{remark2}\emph{
By Theorem \ref{Hinf=H}(1), if $f:\Xinf\to X$ is continuous with respect to any of metrics $d_{p,q}$ or $d_{s,q}$ where $q<1$, then for every $(K_k)\in\KXinf$, we have $f(\prod_{k=0}^\infty K_k)\in\K(X)$, that is, $f$~satisfies (C2). In particular, if $\F=\{f_1,...,f_n\}$ consists of such mappings, then}
\begin{equation}\label{fili1}
\F(K_0,K_1,...)={f_1\left(\prod_{k=0}^\infty K_k\right)}\cup...\cup {f_n\left(\prod_{k=0}^\infty K_k\right)}.
\end{equation}
\end{remark}


\begin{definition}\emph{
Let $\F=\{f_1,...,f_n\}$ be a \gifs. We say that $A\in\K(X)$ is an} attractor \emph{or a} fractal generated by $\F$,\emph{ if}
\begin{equation}
\F(A,A,...)=A.
\end{equation}

\end{definition}

We will consider four types of contractive conditions for a \gifs$\;\F=~\{f_1,...,f_n\}$:
\begin{equation*}
(S_1)\;\;\;\;\;\;\;L_{(s,1)}(\F):=\max\{L_{s,1}(f_i):i=1,...,n\}<1\;\;\mbox{and each $f_i$, $i=1,...,n$, satisfy (C1) condition;}
\end{equation*}
\begin{equation*}
(S_2)\;\;\;\;\;\;\;L_{(s,1)}(\F):=\max\{L_{s,1}(f_i):i=1,...,n\}<1\;\;\mbox{and each $f_i$, $i=1,...,n$, satisfy (C2) condition;}
\end{equation*}
\begin{equation*}
\;\;\;\;(Q)\;\;\;\;\;\;\;\;L_{(s,q)}(\F):=\max\{L_{s,q}(f_i):i=1,...,n\}<1\;\;\mbox{for some $q\in(0,1)$;}\;\;\;\;\;\;\;\;\;\;\;\;\;\;\;\;\;\;\;\;\;\;\;\;\;\;\;\;\;\;\;\;\;\;\;\;\;\;\;\;\;\;\;\;\;\;\;\;\;\;\;\;\;\;\;\;
\end{equation*}
\begin{equation*}
\;\;\;\;(P)\;\;\;\;\;\;\;\;L_{(p,q)}(\F):=\max\{L_{p,q}(f_i):i=1,...,n\}<(1-q)^{1/p}\;\;\mbox{for some $q\in(0,1)$ and $p\in[1,\infty)$.}\;\;\;\;\;\;\;\;\;\;\;\;\;\;
\end{equation*}
\begin{remark}\label{abc28}\emph{\\
(1) The recalled Theorem \ref{se2} says that if $\F$ satisfies $(S_1)$ then $\F$ generates a unique fractal set $A_\F$, and for every sequence $(K_k)\in\KXinf$, the sequence $(Y_k)$ defined as in (\ref{abc21}) converges to $A_\F$.\\
(2) By Remark \ref{abc7}, Remark \ref{se4} and Remark \ref{remark2}, we have that:
\begin{equation}\label{abc23}
\;\;\;\;(Q)\;\iff \;(P)
\end{equation}
and each of them implies $(S_2)$. Moreover, $(S_2)$ clearly implies $(S_1)$}. 
\end{remark}

Later we show that none of these implications can be reversed.\\
The next result shows that GIFSs$_\infty$ satisfying (Q) (or, equivalently, (P)) generate fractals which satisfy more restrictive conditions. This is our first main result of the paper and can be considered as a version of the H--B theorem for GIFS$_\infty$:
\begin{theorem}\label{ft1}
Let $(X, d)$ be a complete metric space and $\F=\{f_1,...,f_n\}$ be a GIFS$_\infty$ which satisfies~$(Q)$ (or, equivalently, (P)). 
Then $\F$ generates a unique fractal $A_\F$.\\
Moreover, for every sequence $(K_k)~\in~\KXinf$, the sequence of generalized iterations $(K^k)$ defined by:
\begin{equation}\label{fili2}
K^1:=\F(K_0,K_1,...),
\end{equation}
\begin{equation}\label{fili3}
K^{k+1}:=\F(K^k,...,K^1,K_0,K_1,...)
\end{equation}
converges to $A_\F$ with respect to the Hausdorff-Pompeiu metric. More precisely, for every $(K_k)\in\KXinf$,
\begin{itemize}
\item[-] if $L_{(p,q)}(\F)<(1-q)^{1/p}$, then
$$H^d(K^k, A_\F)\leq L_{(p,q)}(\F)\frac{((L_{(p,q)}(\F))^p+q)^{\frac{k-1}{p}}}{1-((L_{(p,q)}(\F))^p+q)^{\frac{1}{p}}} H^d_{p,q}((K_0,K_1,K_2...),(K^1,K_0,K_1,...));$$
\item[-] if $L_{(s,q)}(\F)<1$, then
$$H^d(K^k, A_\F)\leq L_{(s,q)}(\F)\frac{(\max\{L_{(s,q)}(\F), q\})^{k-1}}{1-\max\{L_{(s,q)}(\F), q\}} H^d_{s,q}((K_0,K_1,K_2...),(K^1,K_0,K_1,...)).$$
\end{itemize}
\end{theorem}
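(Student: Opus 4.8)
The plan is to deduce everything from Theorem~\ref{Banach_inf}, applied not to the $f_i$ themselves but to the map $\F$ they induce on the space $\K(X)$ equipped with the Hausdorff--Pompeiu metric $H^d$ (which is complete because $X$ is). The first step is to check that $\F$ is a genuine map $\KXinf\to\K(X)$: each $f_i$, being Lipschitz for $d_{s,q}$ (equivalently $d_{p,q}$), is continuous for that metric, so by Remark~\ref{remark2}---which rests on Theorem~\ref{Hinf=H}(1)---each $f_i$ satisfies (C2), hence $\F(K_0,K_1,\dots)=\bigcup_{i=1}^n f_i\big(\prod_{k=0}^\infty K_k\big)$ is a well-defined element of $\K(X)$. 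I would also record that $\KXinf$ is exactly the $\ell_\infty$-sum of the complete metric space $(\K(X),H^d)$, so that Theorem~\ref{Banach_inf} is applicable once contractivity is verified.

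The core of the argument is showing that the induced $\F$ is a contraction in the sense Theorem~\ref{Banach_inf} requires. I would chain three elementary facts: first, the Hausdorff distance of finite unions satisfies $H^d(\bigcup_i A_i,\bigcup_i B_i)\le\max_i H^d(A_i,B_i)$; second, an $L$-Lipschitz map $g$ between metric spaces sends compacta to compacta with $H(g(S),g(T))\le L\cdot H(S,T)$; and third, Theorem~\ref{Hinf=H}(3), which gives $H^{d_{s,q}}\big(\prod_k K_k,\prod_k D_k\big)=H^d_{s,q}((K_k),(D_k))$ (and, in the $p$-case, $H^{d_{p,q}}\big(\prod_k K_k,\prod_k D_k\big)\le H^d_{p,q}((K_k),(D_k))$). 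Combining these yields $H^d\big(\F(K_0,K_1,\dots),\F(D_0,D_1,\dots)\big)\le L_{(s,q)}(\F)\,H^d_{s,q}((K_k),(D_k))$, so the Lipschitz constant of the induced $\F$ with respect to $(H^d)_{s,q}$ is at most $L_{(s,q)}(\F)<1$ (and its $(H^d)_{p,q}$-constant is at most $L_{(p,q)}(\F)<(1-q)^{1/p}$). This is the one place where $(Q)$/$(P)$, rather than mere (C1), genuinely matters: it is precisely Theorem~\ref{Hinf=H}(3), valid only for $q<1$, that keeps the infinite products compact and makes these estimates go through---the very thing that fails in Secelean's $d_{s,1}$ framework.

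With the contraction estimate in hand, I would apply Theorem~\ref{Banach_inf} to $f:=\F\colon\ell_\infty(\K(X))\to\K(X)$. It produces a unique $A_\F\in\K(X)$ with $\F(A_\F,A_\F,\dots)=A_\F$; since a fractal of $\F$ is by definition exactly such a generalized fixed point, $A_\F$ is the unique fractal. Unwinding the iteration scheme~(\ref{fili4})--(\ref{fili5}) for $f=\F$ shows that the sequence of generalized iterates of $\F$ from the starting sequence $(K_k)$ is exactly $(K^k)$ defined by (\ref{fili2})--(\ref{fili3}), so Theorem~\ref{Banach_inf} gives both its convergence to $A_\F$ and the displayed rate estimates---except that those come out with the (possibly smaller) Lipschitz constant of the induced $\F$ in place of $L_{(s,q)}(\F)$ or $L_{(p,q)}(\F)$. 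To finish, I would note that $t\mapsto t(\max\{t,q\})^{k-1}/(1-\max\{t,q\})$ and the analogous $p$-expression are nondecreasing on their domains, so enlarging the constant to $L_{(s,q)}(\F)$ (resp.\ $L_{(p,q)}(\F)$) only weakens the bound, giving exactly the stated inequalities.

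As for the main obstacle: frankly there is no deep one once Theorems~\ref{Hinf=H} and~\ref{Banach_inf} are available---the proof is essentially a transfer of the generalized Banach principle from $X$ to $\K(X)$, with all the substance concentrated in the contraction estimate of the second paragraph. The points that need care are bookkeeping: keeping straight which metric is in force at each step ($H^d_{s,q}$ on $\K(X)$ versus $H^{d_{s,q}}$ on $\ell_\infty(X)$, and $H^d$ as the base metric on the target), and carefully matching the abstract generalized-iteration formula of Theorem~\ref{Banach_inf} against~(\ref{fili2})--(\ref{fili3}).
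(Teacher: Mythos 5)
Your proposal is correct and follows essentially the same route as the paper: establish that the induced map $\F\colon\KXinf\to\K(X)$ has Lipschitz constant at most $L_{(s,q)}(\F)$ (resp.\ $L_{(p,q)}(\F)$) via Lemma~\ref{fl1} and Theorem~\ref{Hinf=H}(3), then invoke Theorem~\ref{Banach_inf} on the complete space $\K(X)$. Your closing remark about monotonicity of the error bound in the Lipschitz constant is a detail the paper leaves implicit, but it is a correct and welcome clarification rather than a divergence.
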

\begin{remark}\emph{Observe that the sequence $(K_k)$ is defined according to (\ref{fili4}) and (\ref{fili5}) for mapping $\F$.}
\end{remark}

We precede the proof with the lemma which lists two known properties of the Hausdorff-Pompeiu metric
\begin{lemma}\label{fl1}
Let $(Y,\rho),(Z,\eta)$ be metric spaces.
\begin{itemize}
\item[i)] If $A_i,B_i,\;i\in I$ are families of subsets of $Y$, then $H^\rho(\bigcup_{i\in I}A_i,\bigcup_{i\in I}B_i)\leq \sup_{i\in I}H^\rho(A_i,B_i)$.
\item[ii)] If $f:Y\to Z$, then for every $A,B\subset Y$, $H^\eta(f(A),f(B))\leq Lip(f)H^\rho(A,B)$.
\end{itemize}
\end{lemma}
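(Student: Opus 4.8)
The plan is to reduce both assertions to elementary monotonicity properties of the one-sided \emph{excess functional} $e_\rho(A,B):=\sup_{a\in A}\inf_{b\in B}\rho(a,b)$, via the identity $H^\rho(A,B)=\max\{e_\rho(A,B),e_\rho(B,A)\}$. In each part it suffices to bound $e_\rho$ in one direction, since the other direction follows by interchanging the roles of the two families, and the claim for $H^\rho$ is then obtained by taking the maximum. Degenerate cases (a member of one family empty, or the right-hand side equal to $+\infty$) are vacuous under the usual conventions, so I would argue in the generic situation.

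For part (i), I would combine two observations about $e_\rho$. First, the excess out of a union equals the supremum of the excesses, $e_\rho(\bigcup_{i\in I}A_i,C)=\sup_{i\in I}e_\rho(A_i,C)$, because a supremum over a union of sets decomposes as the supremum of the suprema over the pieces. Second, $e_\rho$ is antitone in its second argument: enlarging the target from $B_i$ to $\bigcup_{j\in I}B_j$ can only decrease each quantity $\inf_{b}\rho(a,b)$, hence $e_\rho(A_i,\bigcup_{j}B_j)\le e_\rho(A_i,B_i)$. Chaining these gives $e_\rho(\bigcup_i A_i,\bigcup_j B_j)=\sup_i e_\rho(A_i,\bigcup_j B_j)\le\sup_i e_\rho(A_i,B_i)\le\sup_i H^\rho(A_i,B_i)$; the symmetric bound with the $A_i$ and $B_i$ interchanged holds as well, and taking the maximum completes this part.

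For part (ii), I would fix $a\in A$ and, using $f(B)=\{f(b):b\in B\}$ together with the Lipschitz bound applied termwise, estimate $\inf_{z\in f(B)}\eta(f(a),z)=\inf_{b\in B}\eta(f(a),f(b))\le\on{Lip}(f)\inf_{b\in B}\rho(a,b)$. Taking the supremum over $a\in A$ yields $e_\eta(f(A),f(B))\le\on{Lip}(f)\,e_\rho(A,B)\le\on{Lip}(f)\,H^\rho(A,B)$, and the symmetric estimate for $e_\eta(f(B),f(A))$ is identical; the maximum of the two gives $H^\eta(f(A),f(B))\le\on{Lip}(f)\,H^\rho(A,B)$. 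I do not expect any genuine obstacle here: both items are standard facts about the Hausdorff--Pompeiu metric, and the only point requiring care is that $e_\rho$ is not symmetric, so each of the two directions must be verified (or obtained by relabelling).
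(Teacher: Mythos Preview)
Your argument is correct and is the standard one via the one-sided excess $e_\rho(A,B)=\sup_{a\in A}\inf_{b\in B}\rho(a,b)$; both monotonicity observations in (i) and the termwise Lipschitz estimate in (ii) are exactly what is needed, and your handling of the asymmetry of $e_\rho$ is appropriate. Note that the paper itself does not supply a proof of this lemma at all: it is introduced as ``two known properties of the Hausdorff--Pompeiu metric'' and used without justification, so there is nothing to compare against beyond confirming that your proof fills the gap cleanly.
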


\begin{proof}(of Theorem \ref{ft1})
Assume that $L_{(p,q)}(\F):=\max\{L_{p,q}(f_i): i=1,...,n\}<1$.
For every $(K_k), (D_k) \in\KXinf$, we have by Theorem \ref{Hinf=H}, Remark \ref{remark2} and  Lemma \ref{fl1},
$$H^d\left(\F((K_k)\right) , \F((D_k)) =H^d\left(\bigcup_{i=1}^nf_i\left(\prod_{k=0}^\infty K_k\right), \bigcup_{i=1}^nf_i\left(\prod_{k=0}^\infty D_k\right)\right)  $$ $$\leq \max\left\{H^d\left(f_i\left(\prod_{k=0}^\infty K_k\right), f_i\left(\prod_{k=0}^\infty D_k\right)\right):i=1,...,n\right\} \leq \max\left\{L_{p,q}(f_i)H^{d_{p,q}}\left(\prod_{k=0}^\infty K_k,\prod_{k=0}^\infty D_k\right):i=1,...,n\right\}$$
$$
\leq L_{(p,q)}(\F)H^{d_{p,q}}\left(\prod_{k=0}^\infty K_k,\prod_{k=0}^\infty D_k\right)\leq L_{(p,q)}(\F)H^d_{p,q}((K_k),(D_k)).
$$
Hence the Lipschitz constant $L_{p,q}(\F)\leq L_{(p,q)}(\F)<(1-q)^{1/p}$, so the assumptions of Theorem \ref{Banach_inf} are satisfied, and the thesis follows also from this theorem and a fact that $\K(X)$ is complete provided $X$ is complete.\\
Similarly we can prove that the Lipschitz constant $L_{s,q}(\F)\leq L_{(s,q)}(\F)$, hence also the last estimation follows from Theorem \ref{Banach_inf}.
\end{proof}
We are ready to show that the implications stated in Remark \ref{abc28} cannot be reversed. In fact, we will prove that we cannot extend the thesis of Theorem \ref{se2}.

\begin{example}\label{exampleee1}\emph{(1) Let $X=\{\frac{1}{k}:k\in\N\}\cup\{0\}$, and define $f_1,f_2:\ell_\infty(X)\to X$ by
$$
f_1((x_k)):=\frac{1}{2}\max\{x_k:k\in\N^*\},\;\;\;f_2((x_k)):=1.
$$
Secelean proved in \cite[Example 3.1]{Se} that $L_{s,1}(f_1)=\frac{1}{2}$ and $f_1$ satisfies $(C2)$. Clearly, $L_{s,1}(f_2) = 0$ and $f_2$ also satisfies $(C2)$. In particular, the \gifs$\;\F:=\{f_1,f_2\}$ satisfies $(S_2)$, hence it has an attractor $A_\F$. It is easy to see that 
$$A_\F = \{0\} \cup \{2^{-n}: n\in \N^*\}.$$
We will show that the thesis of Theorem \ref{ft1} is not satisfied. Let $K_0=K_1=...=X$, and define
$$
B:=\left\{\frac{1}{2j}:j\in\N\right\}\cup\{0, 1\}.
$$
It is easy to see that for every $k\geq 2,\;
K^k=\F(K^{k-1},...,K^1,K_0,K_1,...)=B.
$ 
\\
Hence the sequence of generalized iterates of $(K_k)$ does not converge to $A_\F$.\\
(2) In fact, even more simple example is the \gifs$\;\F':=\{f_1\}$. Indeed, its attractor is $\{0\}$, but for every $(K_k)\in\KXinf$ so that for some $k_0\in\N^*$, there is $x\in K_{k_0}\setminus\{0\}$, we have that $H(K^k,\{0\})\geq \frac{1}{2}x$ for every $k\in\N$.\\
(3) The last example we want to present here is more natural, but it satisfies just $(S_1)$. Let $X=[0,2]$, $f_1((x_k)):=\frac{1}{2}\sup\{x_k:k\in\N^*\}$ and $f_2((x_k)):=\frac{1}{2}\sup\{x_k:k\in\N^*\}+\frac{1}{4}$. Then $f_1,f_2$ satisfy (C1) and $L_{(s,1)}(\F)=\frac{1}{2}<1$, hence $(S_1)$ is fulfilled, and the set $[0,\frac{1}{2}]$ is the unique fractal of $\F$. On the other hand, if $K_k:=[0,2]$ for $k=0,1,...$, then $K^k=\F(K^{k-1},...,K^1,K_0,K_1,...)=[0,\frac{5}{4}]$ for $k\geq 2$. Thus $(K_k)$ does not converge to $A_\F=[0,\frac{1}{2}]$.\\
As we announced, $f_1,f_2$ do not satisfy $(C2)$ - setting $K_k:=\{0\}\cup[1+\frac{1}{k+1},2]$, we clearly have that $(K_k)\in\ell_\infty(\K(X))$, but $f_1(\prod_{k=0}^\infty K_k)=\{0\}\cup\left(\frac{1}{2},1\right]$ and $f_2(\prod_{k=0}^\infty K_k)=\{\frac{1}{4}\}\cup\left(\frac{3}{4},\frac{5}{4}\right]$. In particular, 
$$\F(K_0,K_1,...)=\left\{0, \frac{1}{4}\right\} \cup \left(\frac{1}{2},\frac{5}{4}\right]$$ and $\F$ does not satisfy $(S_2)$.
}
\end{example}
\begin{remark}\emph{
Let $\F=\{f_1,...,f_n\}$ be a \gifs \ which satisfies (Q) (or, equivalently, (P)). Let $(K_k)\in\KXinf$ be such that each $K_k$ is singleton, and let $(K^k)$ be the sequence of generalized iterates of $\F$. It is easy to see that each set $K^k$ is finite. In fact, by induction it can be shown that
$
\on{card}(K^k)\leq n^{2^{k-1}}.
$ 
Hence we can use also sets $K^k$ to present an image of the fractal (we use it in the next example). Also, by the last part of Theorem \ref{ft1}, we can estimate the distance between $K^k$ and~$A_\F$.\\
On the other hand, the sets $(Y_k)$ defined as in Theorem \ref{se2} will automatically be infinite.}
\end{remark}
\begin{example}\label{ex5}\emph{
Let $\F=\{f_1,f_2,f_3,f_4\}$, where $f_1,f_2,f_3,f_4:\ell_\infty(\R^2)\to \R^2$ are defined by
$$
f_1((x_k,y_k)):=\sum_{k=0}^\infty \frac{1}{10\cdot 4^k}\left(x_k,y_k\right)
\;\;\;\;\;\;\;\;\;\;\;
f_2((x_k,y_k)):=\left(0,\frac{1}{2}\right)+\sum_{k=0}^\infty \frac{1}{10\cdot 4^k}\left(x_k,y_k\right)
$$
$$
f_3((x_k,y_k)):=\left(\frac{1}{2},0\right)+\sum_{k=0}^\infty \frac{1}{10\cdot 4^k}\left(x_k,y_k\right)
\;\;\;\;\;\;\;\;\;\;\;
f_4((x_k,y_k)):=\left(\frac{1}{2},\frac{1}{2}\right)+\sum_{k=0}^\infty \frac{1}{10\cdot 4^k}\left(x_k,y_k\right)
$$
It is easy to see that $L_{(s,\frac{1}{2})}(\F)\leq \frac{1}{5}$ (when considering maximum metric on $\R^2$), hence the assumption~(Q) is satisfied. Set $K_i=\{(0,0)\}, i\in\N^*$. In the following picture we present some first sets of the sequence $(K^k)$, i.e., some first approximations of the attractor $A_\F$ of $\F=\{f_1,f_2,f_3,f_4\}$. 
\begin{center}
\includegraphics{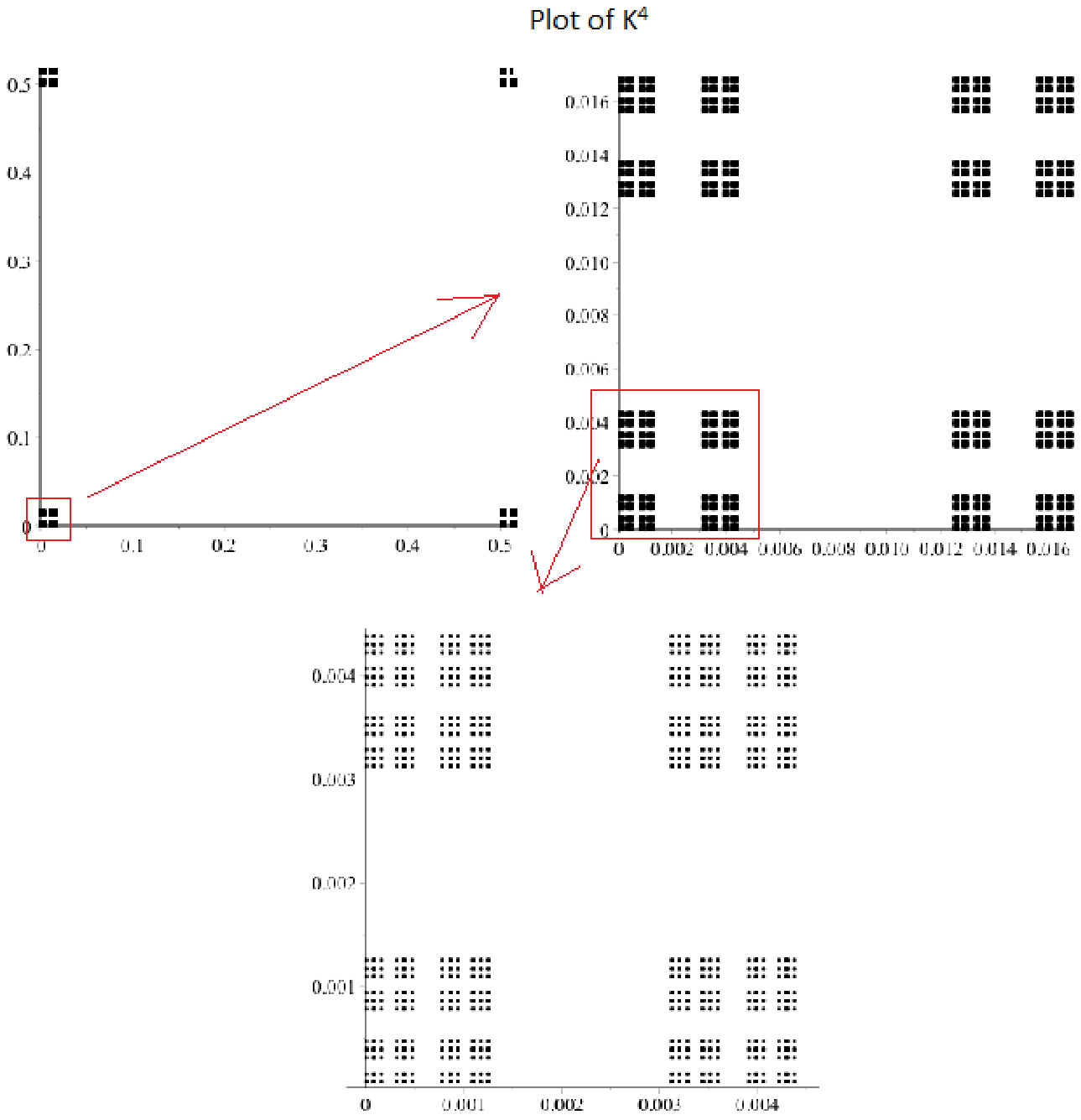}
\end{center}
}\end{example}

As a corollary of Theorem \ref{fp5}, we get that the attractor of a \gifs$\;$ is a limit of attractors of certain GIFSs.
\begin{theorem}\label{abc24}
Let $X$ be complete and $\F=\{f_1,...,f_n\}$ be a \gifs \ satisfying (Q) (or, equivalently, (P)). Choose $x\in X$, and for every $m\in\N$, let $\F_m=\{{f}^m_1,...,{f}^m_n\}$ be a GIFS of order $m$ defined by
$$
{f}^m_i(x_0,...,x_{m-1}):=f_i(x_0,...,x_{m-1},{x},{x},...),\;i=1,...,n.
$$
Then each $\F_m$ satisfies the assumptions of Theorem \ref{gifsinitial}, and the sequence $(A_{\F_m})$ of attractors of $\F_ms$ converges to $A_\F$, the attractor of $\F$.
\end{theorem}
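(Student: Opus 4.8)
The plan is to obtain this as a direct corollary of Theorem \ref{fp5}, applied with the underlying complete metric space taken to be the hyperspace $(\KX,H^d)$ (which is complete because $X$ is), with the map there being the induced map $\F\colon\KXinf\to\KX$, and with the distinguished point being the singleton $\{x\}\in\KX$ --- this last choice is where the hypothesis $x\in X$ enters.

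First I would check that $\F$ is a legitimate input for Theorem \ref{fp5}, i.e.\ that it satisfies the contractive condition (\ref{fp1,5}) (equivalently (\ref{fp1,1})) relative to the metrics $H^d_{s,q}$, resp.\ $H^d_{p,q}$, on $\KXinf$. Since $\F$ satisfies $(Q)$, by Remark \ref{remark2} each $f_i$ satisfies $(C2)$, so $\F$ is well defined with the closures superfluous, and the estimate already carried out in the proof of Theorem \ref{ft1} gives $L_{s,q}(\F)\le L_{(s,q)}(\F)<1$ together with its $d_{p,q}$-analogue. The only real verification is the identification of the two finite-order maps: from the definition of $f^m_i$ one has, for any $K_0,...,K_{m-1}\in\KX$,
$$
f_i\big(K_0\times...\times K_{m-1}\times\{x\}\times\{x\}\times...\big)=f^m_i(K_0\times...\times K_{m-1}),
$$
and taking the union over $i=1,...,n$ shows that the map ``$(\F)_m$'' produced by Theorem \ref{fp5} from $\F$, namely $\F(K_0,...,K_{m-1},\{x\},\{x\},...)$ viewed as a function of $(K_0,...,K_{m-1})\in\KX^m$, coincides with the map $\KX^m\to\KX$ induced by the GIFS $\F_m=\{f^m_1,...,f^m_n\}$.

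Then I would read off the two conclusions. Applying Theorem \ref{fp5} to each single $f_i\colon\Xinf\to X$ (with the point $x$) shows that every $f^m_i$ is a generalized Banach contraction on $X^m$; alternatively, since $q^k\le1$, the restriction of $d_{s,q}$ to sequences that are constantly $x$ from the $m$-th coordinate on is dominated by the maximum metric on $X^m$, so $\on{Lip}(f^m_i)\le L_{s,q}(f_i)<1$ directly. Thus $\F_m$ is a GIFS of order $m$ of generalized Banach contractions, Theorem \ref{gifsinitial} applies and yields the attractor $A_{\F_m}$, and by the uniqueness clause of Theorem \ref{new1} this $A_{\F_m}$ is exactly the generalized fixed point of $(\F)_m=\F_m$. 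On the other hand, Theorem \ref{fp5} applied to $\F$ says that the sequence of generalized fixed points of the maps $(\F)_m$, $m\in\N$, converges to the generalized fixed point of $\F$; by the identification above this is the sequence $(A_{\F_m})$, and by the definition of an attractor the limit is $A_\F$. I do not anticipate any genuine obstacle: the statement is formally a corollary of Theorem \ref{fp5}, and the only points requiring care are the choice of the distinguished point of $\KX$ as the singleton $\{x\}$ and the displayed product identity, both of which are routine.
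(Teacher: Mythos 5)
Your proposal is correct and follows essentially the same route as the paper: both establish that the induced map $\F:\KXinf\to\K(X)$ satisfies the hypotheses of Theorem \ref{fp5} via the estimate from the proof of Theorem \ref{ft1}, identify $\F_m(K_0,...,K_{m-1})$ with $\F(K_0,...,K_{m-1},\{x\},\{x\},...)$, and then invoke Theorem \ref{fp5} once on $\F$ (for the convergence $A_{\F_m}\to A_\F$) and once on each $f_i$ (to see that each $f^m_i$ is a generalized Banach contraction). The only additions on your side are routine elaborations the paper leaves implicit.
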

\begin{proof}
Assume (P). By the proof of Theorem \ref{ft1} we see that the Lipschitz constant of the map $\F:\KXinf\to\K(X)$ satisfies (\ref{fp1,1}). Also, for every $K_0,...,K_{m-1}\in\K(X)$,
$$
\F_m(K_0,...,K_{m-1})=\bigcup_{i=1}^n{f}^m_i(K_0\times...\times K_{m-1})=\bigcup_{i=1}^n{f}_i(K_0\times...\times K_{m-1}\times\{x\}\times\{x\}\times...)=$$ $$=\F(K_0,..., K_{m-1},\{x\},\{x\},....).
$$
Hence Theorem \ref{fp5} implies that the Lipschitz constant $\on{Lip}(\F_m)$ of each $\F_m$ is less then one (when considering the maximum metric on $\K(X)^m$), and the sequence of attractors $(A_{\F_m})$ of $\F_ms$ converges to $A_\F$, the attractor of $\F$. Moreover, using Theorem \ref{fp5} for each $f_i$, we also have that each $\F_m$ consists of generalized Banach contractions (when considering the maximum metric on $X^m$). Hence each $\F_m$ satisfies the assumptions of Theorem \ref{gifsinitial}.
\end{proof}
\begin{remark}\emph{
In the paper \cite{JaMS}, together with Jaros, we presented algorithms generating images of GIFSs fractals. Hence, with a help of the above result we can get an image of an approximation of the fractal $A_\F$ - we first choose large enough $m$, define the GIFS $\F_m$ and generate the image of the attractor $A_{\F_m}$.\\
Note that in the formulation of Theorem \ref{fp5} from \cite{JMS} we estimated the speed of the convergence $x_m^*\to x^*$. Also, the speed of (parts of) algorithms presented in \cite{JaMS} were calculated.}
\end{remark}

By Remark \ref{abc8}, thesis of Theorem \ref{abc24} may not hold uder assumption $(S_1)$. We will give a bit less trivial example:
\begin{example}\emph{
Consider the \gifs $\;\F=\{f_1, f_2\}$ from Example \ref{exampleee1}(1). For every $m\in\N$, set
$$
{f}_i^m(x_0,...,x_{m-1}):=f_i(x_0,...,x_{m-1},1,1,...).
$$
Then the set $\{\frac{1}{2},1\}$ is the attractor of each $\F_m=\{f_1^m,{f_2}^m\}$, but, clearly $A_\F\neq \{\frac{1}{2},1\}$. However, $\F$ satisfies $(S_2)$.
}\end{example}

\begin{remark}\emph{
Finally, let us remark that in view of the equivalence $(P)\Leftrightarrow (Q)$, it is enough to develop the theory for GIFSs$_\infty$ satisfying one of this conditions (and, of course, for more general (S)). However, the $"$machinery$"$ works nice for both types of metrics $d_{p,q}$ and $d_{s,q}$ (and, in connection, for both conditions (P) and (Q)). In particular, we get natural estimations in Theorem \ref{ft1}. Hence we will formulate all the results for both cases, but we will give proofs just for the more difficult one.}
\end{remark}

\section{A generalized code space for GIFS$_\infty$}

In this section we will construct and investigate a counterpart of the code space for GIFSs$_\infty$. Recall that in the case of classical IFSs consisting of $n$ maps, the code space is the Cantor space $\mathbf{\Omega}:=\prod_{k=0}^\infty\{1,...,n\}$ with the product topology. Strobin and Swaczyna in \cite{SS2} defined and investigated a counterpart of the code space for GIFSs (see also \cite{M1}). In our construction we will follow the ideas from \cite{SS2}.\\
Let us also note that here we will just consider "abstract" code spaces - the relationships between GIFSs$_\infty$ and their code spaces will be investigated later. 


\subsection{A generalized code space}
Let $n\in\N$. At first, define $\Omega_0,\Omega_1,\Omega_2,...$ according to (\ref{abc0}) for a set $\Omega:=\{1,..., n\}$. 
Now let $d$ be the discrete metric on $\Omega$. Since $(\Omega,d)$ is bounded, Lemma \ref{filiplemma1}(ii) implies that $\Omega_k=\Omega_k^\infty$, and hence we can consider the metrics $d_{k,p,q}$ and $d_{k,s,q}$ defined in (\ref{abc11})-(\ref{abc14}) on $\Omega_k$. 
The following lemma is a straight consequence of Lemma \ref{filiplemma1} and a fact that $\on{diam}_{d}(\Omega)=1$.
\begin{lemma}\label{filiplemma1,5}In the above frame:
\begin{itemize}
\item[(i)] $\Omega_k=\Omega_k^\infty$;
\item[(ii)] if $q<1$, then for every $k\in\N$, $d_{k,p,q}$ and $d_{k,s,q}$ induce the same compact topology on $\Omega_k$ - exactly the Tychonoff product topology;
\item[(iii)] $d_{k,s,1}$ is the discrete topology on $\Omega_k$;
\item[(iv)] 
$
\on{diam}_{d_{k,p,q}}(\Omega_k)=(1-q)^{-k/p}\;\;\;\;\;\;\mbox{and}\;\;\;\;\;\;
\on{diam}_{d_{k,s,q}}(\Omega_k)=1.
$
\end{itemize}
\end{lemma}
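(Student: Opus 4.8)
The plan is to deduce all four items from Lemma~\ref{filiplemma1}, using only the fact that the discrete metric $d$ on $\Omega=\{1,\dots,n\}$ is bounded with $\on{diam}_d(\Omega)=1$. To begin, since $(\Omega,d)$ is bounded, Lemma~\ref{filiplemma1}(ii) applies directly and yields item (i), namely $\Omega_k=\Omega_k^\infty$ for every $k\in\N$; in particular the metrics $d_{k,p,q}$ and $d_{k,s,q}$ of (\ref{abc11})--(\ref{abc14}) are honestly defined on all of $\Omega_k$, with no passage to a proper subspace (cf.\ Remark~\ref{finalaa1}).

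For item (ii), the identification of the topologies induced by $d_{k,p,q}$ and $d_{k,s,q}$ (for $q<1$) with the Tychonoff product topology on $\Omega_k=\Omega_k^\infty$ is exactly Lemma~\ref{filiplemma1}(iv) specialized to $(X,d)=(\Omega,d)$. Compactness of this topology I would then establish by an easy induction on $k$: $\Omega_0=\Omega$ is finite, hence compact, and if $\Omega_k$ is compact then $\Omega_{k+1}=\prod_{i=0}^\infty\Omega_k$ is compact by Tychonoff's theorem. Item (iv) is Lemma~\ref{filiplemma1}(iii) applied with $D=X=\Omega$, so that $D_k=\Omega_k$, together with $\on{diam}_d(\Omega)=1$ (the case $k=0$ being checked directly against (\ref{abc13})); this gives $\on{diam}_{d_{k,p,q}}(\Omega_k)=(1-q)^{-k/p}$ and $\on{diam}_{d_{k,s,q}}(\Omega_k)=1$.

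The only item not immediately covered by a citation is (iii), and this is the step I would expect to need a short separate argument, albeit a routine one. I would prove by induction on $k$ that $d_{k,s,1}$ is in fact the discrete metric on $\Omega_k$, i.e.\ takes only the values $0$ and $1$. The case $k=0$ holds by definition. For the inductive step, write $x=(x_i),y=(y_i)\in\Omega_{k+1}=\ell_\infty(\Omega_k^\infty)$; if $x\ne y$ then $x_i\ne y_i$ for some $i$, so $d_{k,s,1}(x_i,y_i)=1$ by the inductive hypothesis and hence $d_{k+1,s,1}(x,y)=\sup_i d_{k,s,1}(x_i,y_i)=1$, while $d_{k+1,s,1}(x,x)=0$. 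Since a discrete metric induces the discrete topology, (iii) follows. The one subtlety worth flagging is that $\Omega_k$ is uncountable for $k\ge1$, so the discrete topology there is not second countable; this is harmless, and the discrete metric $d_{k,s,1}$ exhibits it. Overall the proof is essentially a bookkeeping exercise built on Lemma~\ref{filiplemma1}, with (iii) the only place requiring its own (very short) induction.
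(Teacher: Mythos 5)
Your proposal is correct and follows essentially the same route as the paper, which dismisses the lemma as a direct consequence of Lemma \ref{filiplemma1} together with $\on{diam}_d(\Omega)=1$; you simply make the citations explicit item by item. The short induction you supply for (iii), showing $d_{k,s,1}$ takes only the values $0$ and $1$, is the one detail the paper leaves implicit, and your argument for it is correct.
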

Now for every $k\in\N^*$, let 
$$_k\Omega := \Omega_0 \times ... \times \Omega_k = \prod_{i=0}^k \Omega_i$$
and 
$$\Omega_< := \bigcup_{k\in\N^*} {_k\Omega}. $$
Finally, put
$$\mathbf{\Omega}:=\prod_{k=0}^\infty \Omega_k.$$
The space $\mathbf{\Omega}$ will be called \emph{the code space}.\\ 
Now we define certain metrics on the code space $\mathbf{\Omega}$. For every $\alpha~=~(\alpha_0, \alpha_1, ...), \beta=(\beta_0, \beta_1, ...) \in \mathbf{\Omega}$, set
\begin{equation*}d_{(s,q)}(\alpha, \beta) := \sup\left\{q^kd_{k,s,q}(\alpha_k, \beta_k):k\in\N^* \right\},\;\;\mbox{if}\;q\in(0,1],\end{equation*}
\begin{equation*}d_{(p,q)}(\alpha, \beta) := \left( \sum_{k\in\N^*} \left(\frac{1-q}{2}\right)^kd_{k,p,q}^p(\alpha_k, \beta_k) \right)^{1/p},\;\;\mbox{if}\;q\in(0,1),\;p\in[1,\infty).\end{equation*}

The following lemma is straightforward ((ii) and (iii) follow from earlier observations):
\begin{lemma}\label{filiplemma2}In the above frame:
\begin{itemize}
\item[(i)] the functions $d_{(p,q)},d_{(s,q)}$ are metrics;
\item[(ii)] if $q<1$, then $d_{(p,q)}$ and $d_{(s,q)}$ induce the compact topology on $\mathbf{\Omega}$ - exactly the Tychonoff product topogy;
\item[(iii)] the metric $d_{(s,1)}$ is the discrete metric on $\mathbf{\Omega}$.
\end{itemize}
\end{lemma}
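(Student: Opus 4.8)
The plan is to verify the three items in turn, using the diameter bounds of Lemma~\ref{filiplemma1,5}(iv) to control the defining expressions, the topological identification of Lemma~\ref{filiplemma1,5}(ii) for item (ii), and the description of $d_{k,s,1}$ from Lemma~\ref{filiplemma1,5}(iii) for item (iii). For (i), I would first note that both formulas are finite: by Lemma~\ref{filiplemma1,5}(iv) we have $d_{k,s,q}(\alpha_k,\beta_k)\le 1$ and $d_{k,p,q}(\alpha_k,\beta_k)\le (1-q)^{-k/p}$, so $q^k d_{k,s,q}(\alpha_k,\beta_k)\le q^k$ and $\left(\frac{1-q}{2}\right)^k d_{k,p,q}^p(\alpha_k,\beta_k)\le 2^{-k}$, whence $d_{(s,q)}\le 1$ and $d_{(p,q)}\le 2^{1/p}$. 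Symmetry is clear, and the value is $0$ exactly when every coordinate term vanishes, that is, when $\alpha=\beta$, because each $d_{k,s,q}$ and $d_{k,p,q}$ is a metric. The triangle inequality for $d_{(s,q)}$ follows by applying the triangle inequality of $d_{k,s,q}$ coordinatewise, multiplying through by $q^k\ge 0$, and taking the supremum; for $d_{(p,q)}$ it is Minkowski's inequality (valid since $p\ge 1$) applied to the $\ell^p$-norm of the sequence whose $k$-th entry is $\left(\frac{1-q}{2}\right)^{k/p} d_{k,p,q}(\alpha_k,\beta_k)$, using once more that each $d_{k,p,q}$ satisfies the triangle inequality and that the weights are nonnegative. (The case $q=1$ is covered as well, with $d_{(s,1)}\le 1$.)

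For (ii), with $q<1$, I would show that a sequence in $\mathbf{\Omega}$ converges with respect to $d_{(s,q)}$ (resp.\ $d_{(p,q)}$) if and only if it converges coordinatewise, i.e.\ in the Tychonoff topology on $\prod_{k}\Omega_k$. One implication is immediate: from $q^k d_{k,s,q}(\alpha_k,\beta_k)\le d_{(s,q)}(\alpha,\beta)$ and $\left(\frac{1-q}{2}\right)^k d_{k,p,q}^p(\alpha_k,\beta_k)\le d_{(p,q)}^p(\alpha,\beta)$, convergence in either metric forces convergence in every factor $(\Omega_k,d_{k,s,q})$, resp.\ $(\Omega_k,d_{k,p,q})$, which by Lemma~\ref{filiplemma1,5}(ii) is precisely Tychonoff convergence in $\Omega_k$. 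For the converse I would split into a head and a tail: given $\vp>0$, choose $N$ with $q^N<\vp$ in the supremum case, and with the tail $\sum_{k>N}2^{-k}$ as small as desired in the $p$-case; then the terms with index $k>N$ are uniformly bounded by the diameter estimates already used in (i), while for the finitely many $k\le N$ coordinatewise convergence makes each term small. Since $\mathbf{\Omega}$ is a countable product of metrizable spaces it is metrizable, hence first countable, so having the same convergent sequences forces $d_{(s,q)}$, $d_{(p,q)}$ and the product topology to induce the same topology. Finally each $\Omega_k$ is compact by Lemma~\ref{filiplemma1,5}(ii), so $\mathbf{\Omega}$ is compact by Tychonoff's theorem.

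For (iii), an easy induction shows that each $d_{k,s,1}$ is $\{0,1\}$-valued: this holds for $d_{0,s,1}=d$, and if $\rho$ is a $\{0,1\}$-valued metric then $(\rho)_{s,1}$ of two sequences equals $1$ precisely when they differ in some coordinate and $0$ otherwise, so it is again $\{0,1\}$-valued (cf.\ Lemma~\ref{filiplemma1,5}(iii)). Hence $d_{(s,1)}(\alpha,\beta)=\sup_k d_{k,s,1}(\alpha_k,\beta_k)$ is $1$ when $\alpha\ne\beta$ and $0$ otherwise, i.e.\ $d_{(s,1)}$ is the discrete metric on $\mathbf{\Omega}$. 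The only step carrying any real content is the converse implication in (ii), and even that is the standard ``finite head plus small tail'' argument for metrics on a countable product; the diameter bounds of Lemma~\ref{filiplemma1,5}(iv) are exactly what keeps the tails uniformly controllable, so I do not expect a genuine obstacle.
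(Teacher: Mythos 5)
Your proof is correct and is exactly the elaboration the paper intends: the authors dismiss the lemma as "straightforward," with (ii) and (iii) following from the earlier observations (Lemma~\ref{filiplemma1,5}), and your argument uses precisely those diameter bounds, the compactness/Tychonoff identification on each $\Omega_k$, and the $\{0,1\}$-valuedness of $d_{k,s,1}$. The head-plus-tail comparison of convergent sequences together with metrizability of the countable product is the standard way to justify (ii), and all steps check out.
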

From the above lemma, we see that $\mathbf{\Omega}$ is bounded under both types of metrics: $d_{(p,q)}$ or $d_{(s,q)}$. Hence $\ell_\infty(\mathbf{\Omega})=\prod_{i=0}^\infty\mathbf{\Omega}$, no matter which metric $d_{(p,q)}$, $d_{(s,q)}$ for $q<1$, or $d_{(s,1)}$, we consider on $\mathbf{\Omega}$. Thus we will sometimes write $\ell_\infty(\mathbf{\Omega})$ instead of $\prod_{i=0}^\infty \mathbf{\Omega}$.
\subsection{Canonical GIFS$_\infty$ on the code space $\mathbf{\Omega}$}\label{subsection52}
In the case of classical code space $\prod_{k=0}^\infty\{1,...,n\}$ for IFSs, there is considered the special IFS $\{\sigma_1,...,\sigma_n\}$, where each $\sigma_i(\alpha_0,\alpha_1,...):=(i,\alpha_0,\alpha_1,...)$ is the appropriate shift. In \cite{SS2} we introduced a counterpart of this construction for the GIFS's case. Here we will introduce the \gifs's one. The idea is similar to that from \cite[Proposition 2.4]{SS2}.\\
At first, let us introduce some further notations.\\
If $k\geq 1$ and 
$$\alpha = (\alpha_0, \alpha_1, ..., \alpha_k) \in {{_k}\Omega},$$
then for any $i\in\N^*$, we set (recall here (\ref{abc26}))
\begin{equation}\label{abc29}\alpha(i) := \left(\alpha_1^{(i)}, ..., \alpha_k^{(i)}\right).\end{equation}
In other words, if 
$\alpha = (\alpha_0,(\alpha_1^{(0)},\alpha_1^{(1)},...),(\alpha_2^{(0)},\alpha_2^{(1)},...),...,(\alpha_k^{(0)},\alpha_k^{(1)},...))$, then 
$\alpha(0)=(\alpha_1^{(0)},\alpha_2^{(0)},...,\alpha_k^{(0)})$, $\alpha(1)=(\alpha_1^{(1)},\alpha_2^{(1)},...,\alpha_k^{(1)})$ etc.\\
Clearly $\alpha(i)\in {_{k-1}\Omega}$.\\
If $\alpha\in \mathbf{\Omega}$ we define $\alpha(i)\in\mathbf{\Omega}$ in an analogous way, that is, if $\alpha=(\alpha_0,\alpha_1,\alpha_2,...)$, then 
\begin{equation}\label{abc30}
\alpha(i) := \left(\alpha_1^{(i)},\alpha_2^{(i)},...\right).
\end{equation}

Now we will define an announced family of mappings. 
Let $\tau_1, ..., \tau_n: \ell_\infty(\mathbf{\Omega}) \to\mathbf{\Omega}$ be defined as follows: 
if $(\alpha_0,\alpha_1,\alpha_2,...)\in\ell_\infty(\mathbf{\Omega})$ with $\alpha_i~=~(\alpha_i^{(0)}, \alpha_i^{(1)}, \alpha_i^{(2)}, ...)$, then set:
$$\tau_j(\alpha_0, \alpha_1, ...) := \left(j, \left(\alpha_0^{(0)}, \alpha_1^{(0)}, \alpha_2^{(0)}, ...\right), \left(\alpha_0^{(1)}, \alpha_1^{(1)}, \alpha_2^{(1)}, ...\right), ...\right).$$
Finally, define
$$
\F_\mathbf{\Omega}:=\{\tau_1,...,\tau_n\}.
$$
\begin{theorem}\label{filipproposition1}In the above frame, 
\begin{itemize}
\item[(i)]
$
\mathbf{\Omega}=\F_\mathbf{\Omega}(\ell_\infty(\mathbf{\Omega}))=\tau_1(\ell_\infty(\mathbf{\Omega}))\cup...\cup\tau_n(\ell_\infty(\mathbf{\Omega}))$;
\item[(ii)] for every $j=1,...,n$, $L_{p,q}(\tau_j)  =\left(\frac{1-q}{2}\right)^{1/p}$, provided we consider the metric $d_{(p,q)}$ on $\mathbf{\Omega}$;
\item[(iii)] for every $j=1,...,n$, $L_{s,q}(\tau_j)=q$, provided we consider the metric $d_{(s,q)}$ on $\mathbf{\Omega}$.
\end{itemize}
In particular, if we consider any of metrics $d_{(p,q)}$ or $d_{(s,q)}$ for $q<1$ on the code space $\mathbf{\Omega}$, then $\mathbf{\Omega}$ is the fractal generated by the \gifs$\;\F_\mathbf{\Omega}$.
\end{theorem}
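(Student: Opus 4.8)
The plan is to view each $\tau_j$ as a ``transpose, then prepend $j$'' operation: first identify its image (giving (i)), then read off the Lipschitz constants from a single algebraic identity comparing the metric on $\ell_\infty(\mathbf{\Omega})$ with the metric on $\mathbf{\Omega}$ pulled back by $\tau_j$ (giving (ii) and (iii)), and finally invoke Theorem~\ref{ft1}. For (i): unwinding the definition of $\tau_j$, for $x=(\alpha_0,\alpha_1,\ldots)\in\ell_\infty(\mathbf{\Omega})$ with $\alpha_i=(\alpha_i^{(0)},\alpha_i^{(1)},\ldots)$ one has $\tau_j(x)_0=j$ and $\tau_j(x)_{m+1}=(\alpha_0^{(m)},\alpha_1^{(m)},\ldots)$ for $m\in\N^*$, so comparing with (\ref{abc30}) the $i$-th section satisfies $\tau_j(x)(i)=\alpha_i$ for every $i$. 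Hence $\tau_j$ is a bijection of $\ell_\infty(\mathbf{\Omega})$ onto $\{\beta\in\mathbf{\Omega}:\beta_0=j\}$, with inverse $\beta\mapsto(\beta(0),\beta(1),\ldots)$ (the sequence of sections of a $\beta\in\mathbf{\Omega}$ is automatically bounded, as $\mathbf{\Omega}$ is bounded). Since $\Omega_0=\{1,\ldots,n\}$, the sets $\{\beta:\beta_0=j\}$, $j=1,\ldots,n$, cover $\mathbf{\Omega}$, so $\bigcup_{j=1}^n\tau_j(\ell_\infty(\mathbf{\Omega}))=\mathbf{\Omega}$, which is (i).

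For (ii) and (iii) I would unfold exactly one level of the recursively defined metrics. Fix $x=(\alpha_i),y=(\beta_i)\in\ell_\infty(\mathbf{\Omega})$ as above. From the definition of $d_{(s,q)}$ one gets, on the domain side, $(d_{(s,q)})_{s,q}(x,y)=\sup\{q^{m+i}d_{m,s,q}(\alpha_i^{(m)},\beta_i^{(m)}):m,i\in\N^*\}$. On the image side, $\tau_j(x)$ and $\tau_j(y)$ agree at coordinate $0$, and $\tau_j(x)_{m+1}=(\alpha_i^{(m)})_{i\in\N^*}\in\Omega_{m+1}$; using $d_{m+1,s,q}=(d_{m,s,q})_{s,q}$ this yields $d_{(s,q)}(\tau_j(x),\tau_j(y))=\sup\{q^{(m+1)+i}d_{m,s,q}(\alpha_i^{(m)},\beta_i^{(m)}):m,i\in\N^*\}$, which is precisely $q\,(d_{(s,q)})_{s,q}(x,y)$; hence $L_{s,q}(\tau_j)=q$. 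The computation for $d_{(p,q)}$ is line-for-line the same with suprema replaced by weighted $\ell^p$-sums and the weights $q^k$ replaced by $\left(\frac{1-q}{2}\right)^k$: one obtains $d_{(p,q)}^p(\tau_j(x),\tau_j(y))=\frac{1-q}{2}\,(d_{(p,q)})_{p,q}^p(x,y)$, so $L_{p,q}(\tau_j)=\left(\frac{1-q}{2}\right)^{1/p}$. (Both identities hold for all $x,y$, so the Lipschitz constants are exactly these values, not merely bounded by them.)

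For the final assertion, fix $q\in(0,1)$. By (iii), $L_{(s,q)}(\F_\mathbf{\Omega})=\max_{j}L_{s,q}(\tau_j)=q<1$, so $\F_\mathbf{\Omega}$ satisfies $(Q)$ (equivalently, by (ii) and $\frac{1-q}{2}<1-q$, it satisfies $(P)$); in particular each $\tau_j$ is Lipschitz, hence continuous, and since $\mathbf{\Omega}$ is compact (Lemma~\ref{filiplemma2}(ii)) condition (C1) holds automatically, so $\F_\mathbf{\Omega}$ is a genuine \gifs. Theorem~\ref{ft1} then provides a unique fractal $A_{\F_\mathbf{\Omega}}$. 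On the other hand $\ell_\infty(\mathbf{\Omega})=\prod_{i=0}^\infty\mathbf{\Omega}$ is compact, so each $\tau_j(\ell_\infty(\mathbf{\Omega}))$ is compact (hence closed), and therefore $\F_\mathbf{\Omega}(\mathbf{\Omega},\mathbf{\Omega},\ldots)=\bigcup_{j=1}^n\overline{\tau_j(\ell_\infty(\mathbf{\Omega}))}=\bigcup_{j=1}^n\tau_j(\ell_\infty(\mathbf{\Omega}))=\mathbf{\Omega}$ by (i); that is, $\mathbf{\Omega}$ is a fractal of $\F_\mathbf{\Omega}$, and by uniqueness $\mathbf{\Omega}=A_{\F_\mathbf{\Omega}}$.

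I expect the only genuinely fiddly point to be the index bookkeeping in (ii)/(iii): one must keep straight the outer $\ell_\infty$-index $i$ ranging over copies of $\mathbf{\Omega}$, the coordinate $k$ inside an element of $\mathbf{\Omega}$ (which lives in $\Omega_k$), and --- if one unfolds completely via Lemma~\ref{filiplemma1}(i) --- the internal multi-indices hidden inside $d_{k,s,q}$ and $d_{k,p,q}$, and then verify that prepending the single symbol $j$ shifts exactly the $k$-layer by one, which is what produces the factor $q$ (respectively $\left(\frac{1-q}{2}\right)^{1/p}$). Once the transpose identity $\tau_j(x)(i)=\alpha_i$ from (i) is in hand, the remaining items --- well-definedness of $\tau_j$, the compactness facts, and checking that Theorem~\ref{ft1} applies --- are routine.
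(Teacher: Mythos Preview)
Your proof is correct and follows essentially the same approach as the paper: for (i) you exhibit the inverse $\beta\mapsto(\beta(0),\beta(1),\ldots)$ (the paper phrases this as $\alpha=\tau_{\alpha_0}(\alpha(0),\alpha(1),\ldots)$), and for (ii)/(iii) you unfold one level of the recursive metric and use $d_{m+1,\bullet,q}=(d_{m,\bullet,q})_{\bullet,q}$ together with the index shift, exactly as the paper does --- the only cosmetic difference being that the paper spells out the $d_{(p,q)}$ computation and says ``similarly'' for $d_{(s,q)}$, while you do the reverse. Your explicit verification of the ``In particular'' clause via Theorem~\ref{ft1} and compactness of $\mathbf{\Omega}$ is a welcome addition that the paper leaves implicit.
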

\begin{proof}
Ad(i).
Clearly, $$\bigcup_{j\in\{1,...,n\}} \tau_j(\mathbf{\Omega}\times\mathbf{\Omega}\times...) \subset \mathbf{\Omega}.$$ Take any $\alpha = (\alpha_0, \alpha_1, \alpha_2, ...)\in \mathbf{\Omega}$. It can be easily seen that $\alpha = \tau_{\alpha_0}\left(\alpha(0), \alpha(1), \alpha(2), ...\right)$, hence 
$$\alpha \in \tau_{\alpha_0}(\mathbf{\Omega} \times \mathbf{\Omega} \times ...) \subset \bigcup_{j\in\{1, ..., n\}} \tau_j(\mathbf{\Omega}\times \mathbf{\Omega} \times ...).$$
Ad(ii). Now we prove (ii). Take any $j\in\{1,..., n\}$ and, for simplicity, set $\tau := \tau_j$. Let
$$\alpha = (\alpha_0, \alpha_1, \alpha_2, ...), \beta = (\beta_0, \beta_1, \beta_2, ...) \in \ell_\infty(\mathbf{\Omega}).$$
Then, setting $A:=\left(\frac{1-q}{2}\right)$, we have (we denote $\tau(\alpha) = (\tau(\alpha)_0, \tau(\alpha)_1, ...), \tau(\beta) = (\tau(\beta)_0, \tau(\beta)_1, ...)$):

$$d_{(p,q)}(\tau(\alpha), \tau(\beta)) = \left( \sum_{k\in\N^*} A^k{d_{k,p,q}^p(\tau(\alpha)_k, \tau(\beta)_k)} \right)^{1/p} $$
$$= \left( d^p(j,j) + \sum_{k\in\N} A^k{d_{k,p,q}^p(\tau(\alpha)_k, \tau(\beta)_k)} \right)^{1/p} =A^{1/p}\left( \sum_{k\in\N^*} A^k{d_{{k+1},p,q}^p\left(\tau(\alpha)_{k+1}, \tau(\beta)_{k+1}\right)} \right)^{1/p}$$
$$=A^{1/p}  \left(\sum_{k\in\N^*} A^k{d_{{k+1},p,q}^p\left(\left(\alpha_0^{(k)}, \alpha_1^{(k)}, ...\right), \left(\beta_0^{(k)}, \beta_1^{(k)}, ...\right)\right)} \right)^{1/p} $$
$$=  A^{1/p}\left(\sum_{k\in\N^*} A^k{\sum_{i\in\N^*} q^i d_{k,p,q}^p\left(\alpha_i^{(k)}, \beta_i^{(k)}\right)} \right)^{1/p} = A^{1/p}\left( \sum_{i\in\N^*} q^i \sum_{k\in\N^*}  A^k{d_{k,p,q}^p\left(\alpha_i^{(k)}, \beta_i^{(k)}\right)} \right)^{1/p} $$
$$ = A^{1/p}\left( \sum_{i\in\N^*} q^i d_{(p,q)}^p(\alpha_i, \beta_i) \right)^{1/p} = A^{1/p}  (d_{(p,q)})_{p,q} (\alpha, \beta)=\left(\frac{1-q}{2}\right)^{1/p}(d_{(p,q)})_{p,q} (\alpha, \beta).$$
Hence we get (ii).\\
In a similar way, we can show
$
d_{(s,q)}(\tau(\alpha),\tau(\beta))= q(d_{(s,q)})_{s,q}(\alpha,\beta),
$
which gives (iii).
\end{proof}
\begin{remark}\emph{By (iii), we see that if $n>1$ and we consider $d_{(s,1)}$ metric on $\mathbf{\Omega}$, then $L_{s,1}(\tau_j)=1$, so the assumptions of Theorem \ref{se1} are not satisfied. In fact, $\F_\mathbf{\Omega}$ does not have an attractor in such case. Indeed, as $d_{(s,1)}$ is the discrete metric, an attractor of $\F_\mathbf{\Omega}$ would be finite (as a compact set). But then it would be also the attractor of $\F_\mathbf{\Omega}$, when considering the metric $d_{(p,q)}$ or $d_{(s,q)}$ for $q<1$.
}\end{remark}

\section{Generalized code space and GIFSs of order infinity}
In this section we assume that we work with some fixed \gifs$\;\F=\{f_1, ..., f_n\}$ on a complete metric space $(X, d)$ and $\Omega_k, {_k\Omega}, \Omega_<, \mathbf{\Omega}$ and $X_k$, $X^{\infty}_k$
keep their meaning from the previous sections (in particular, we recall Subsection 3.2). 
\subsection{Counterpart of composition operation -- families $\F_k$} In this subsection we will derive a counterpart of composition of functions. In classical IFS case, if $\{g_1,...,g_n\}$ is an IFS, then we can consider the compositions $g_{\alpha_0}\circ...\circ g_{\alpha_k}$, where $\alpha_0,...,\alpha_k\in\{1,...,n\}$. Note that in such case, the Lipschitz constant $$\on{Lip}(g_{\alpha_0}\circ...\circ g_{\alpha_k})\leq L^{k+1},$$
where $L:=\max\{\on{Lip}(g_i):i=1,...,n\}$ and in consequence, for every bounded set $D$,
$$
\on{diam}(g_{\alpha_0}\circ...\circ g_{\alpha_k}(D))\leq L^{k+1}\on{diam}(D).
$$

In \cite[Section 3]{SS2} we defined the counterpart of composition for GIFSs. Here we bring the ideas from \cite{SS2} to the ''next level''.\\
We will assume here a bit less than $(S_1)$. Namely, we will assume:
\begin{equation}\label{abc15}
L_{(s,1)}(\F):=\max\{L_{s,1}(f_i):i=1,...,n\}<\infty\;\;\;\mbox{and each $f_i$ satisfies (C1) condition.}
\end{equation}
\begin{remark}\label{abc16}\emph{
From Proposition \ref{fp1}(ii),(iii) and Remark~\ref{remark2}, we see that (\ref{abc15}) 
is satisfied provided one of the following holds:
\begin{equation}
L_{(s,q)}(\F):=\max\{L_{s,q}(f_i):i=1,...,n\}<\infty,\;\;\mbox{for some}\;q\in(0,1);
\end{equation}
\begin{equation}
L_{(p,q)}(\F):=\max\{L_{p,q}(f_i):i=1,...,n\}<\infty,\;\;\mbox{for some}\;q\in(0,1),\;p\in[1,\infty).
\end{equation}
}\end{remark}
We will introduce certain families of maps $\F_k=\{f_\alpha:X^\infty_{k+1}\to X:\alpha\in{_k}\Omega\}$ for $k\in\N^*$. The definition will be inductive.\\
For $k=0$, we set
$$
\F_0:=\{f_1,...,f_n\}.
$$
Assume that for some $k\in\N^*$, the family $\F_k$ is already defined.
For every $\alpha=(\alpha_0, \alpha_1, ..., \alpha_k, \alpha_{k+1}) \in {_{k+1}\Omega}$ set
\begin{equation}\label{abcde1}f_\alpha(x_0, x_1, ...) := f_{\alpha_0}\left(f_{\alpha(0)}(x_0), f_{\alpha(1)}(x_1), ...\right),\end{equation}
where $(x_0, x_1, ...) \in X^\infty_{k+2}$. Finally, put 
$$\F_{k+1}=\{f_\alpha:\alpha\in{_{k+1}}\Omega\}.$$

A question arises if the functions $f_\alpha$ are well defined. If $X$ is bounded, then there are no problems since each $X^\infty_k=X_k$. However, in general case it should be justified that if $\alpha\in {_{k}}\Omega$ and $(x_0,x_1,...)\in X^{\infty}_{k+1}$, then the sequence $\left(f_{\alpha(0)}(x_0), f_{\alpha(1)}(x_1), ...\right)\in \Xinf$. 
 We will prove a bit stronger assertion (in the formulation we assume that $\infty^{k+1}=\infty$):
\begin{proposition}\label{abc5}In the above frame, for every $k\in\N^*$:
\begin{itemize}
\item[(i)] the functions $f_\alpha$, $\alpha\in{_k}\Omega$ are well defined;
\item[(ii)] for every $\alpha\in{_k}\Omega$, $\on{Lip}_{d_{k+1,p,q}}(f_\alpha)\leq L_{(p,q)}(\F)^{k+1}$ and $\on{Lip}_{d_{k+1,s,q}}(f_\alpha)\leq L_{(s,q)}(\F)^{k+1}$;
\item[(iii)] if $L_{(p,q)}(\F)<\infty$ and $B\subset X^\infty_{k+1}$ is bounded with respect to $d_{k+1,p,q}$, then  $\{f_{\alpha}(x):\alpha\in{_k}\Omega,\;x\in B\}$ is bounded in $X$;
\item[(iv)] if $L_{(s,q)}(\F)<\infty$ and $B\subset X^\infty_{k+1}$ is bounded with respect to $d_{k+1,s,q}$, then $\{f_{\alpha}(x):\alpha\in{_k}\Omega,\;x\in B\}$ is bounded in $X$.
\end{itemize}
\end{proposition}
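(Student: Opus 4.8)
The plan is to establish all four items by induction on $k$, exploiting the recursive definition (\ref{abcde1}): for $\alpha=(\alpha_0,\dots,\alpha_k)\in{_k}\Omega$ one has $f_\alpha(x_0,x_1,\dots)=f_{\alpha_0}(f_{\alpha(0)}(x_0),f_{\alpha(1)}(x_1),\dots)$, where each $\alpha(i)$ lies in ${_{k-1}}\Omega$ and the outermost map $f_{\alpha_0}$ comes from the finite family $\{f_1,\dots,f_n\}$. The heart of the argument is the following auxiliary statement, which in particular yields (i): \emph{under} (\ref{abc15}), \emph{for every $k\in\N^*$ the maps $f_\alpha$, $\alpha\in{_k}\Omega$, are well defined, and for every $B\subset X^\infty_{k+1}$ bounded with respect to $d_{k+1,s,1}$ the set $\{f_\alpha(x):\alpha\in{_k}\Omega,\ x\in B\}$ is bounded in $X$.}

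I would prove this auxiliary statement by induction on $k$, the base case $k=0$ being immediate: $f_1,\dots,f_n$ are given as maps $\Xinf\to X$ with $L_{s,1}(f_i)\le L_{(s,1)}(\F)<\infty$, and finitely many Lipschitz maps send a bounded set to a bounded one. For the step from level $k-1$ to level $k$ the key observation is that $d_{k+1,s,1}$ is the \emph{unweighted} supremum metric, $d_{k+1,s,1}(x,y)=\sup_i d_{k,s,1}(x_i,y_i)$; hence, since each point of $X^\infty_{k+1}=\ell_\infty(X^\infty_k)$ is itself a $d_{k,s,1}$-bounded sequence in $X^\infty_k$, a $d_{k+1,s,1}$-bounded $B\subset X^\infty_{k+1}$ has the property that the union of all its coordinate slices $B':=\bigcup_{x\in B}\{x_i:i\in\N^*\}$ is a $d_{k,s,1}$-bounded subset of $X^\infty_k$. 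Applying the inductive hypothesis to $B'$ and the index set ${_{k-1}}\Omega$ places all the inner values $f_{\alpha(i)}(x_i)$ in one bounded subset of $X$; this already shows that each sequence $(f_{\alpha(i)}(x_i))_i$ belongs to $\Xinf$ --- so $f_\alpha$ is well defined --- and that the family of all such sequences is bounded in $(\Xinf,d_{s,1})$. Applying the finitely many $d_{s,1}$-Lipschitz maps $f_{\alpha_0}$ keeps the image bounded, which closes the induction.

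For (ii) I would again induct on $k$, estimating straight from (\ref{abcde1}): for $\alpha=(\alpha_0,\dots,\alpha_{k+1})\in{_{k+1}}\Omega$ and $x=(x_i),y=(y_i)\in X^\infty_{k+2}$, writing $\mathbf u=(f_{\alpha(i)}(x_i))_i$ and $\mathbf v=(f_{\alpha(i)}(y_i))_i$ in $\Xinf$, one has $d(f_\alpha(x),f_\alpha(y))=d(f_{\alpha_0}(\mathbf u),f_{\alpha_0}(\mathbf v))\le L_{p,q}(f_{\alpha_0})\,d_{p,q}(\mathbf u,\mathbf v)$; the inductive bound $\on{Lip}_{d_{k+1,p,q}}(f_{\alpha(i)})\le L_{(p,q)}(\F)^{k+1}$ together with $d_{k+2,p,q}=(d_{k+1,p,q})_{p,q}$ pulls out the factor $L_{(p,q)}(\F)^{k+1}d_{k+2,p,q}(x,y)$, and since $L_{p,q}(f_{\alpha_0})\le L_{(p,q)}(\F)$ this gives $\on{Lip}_{d_{k+2,p,q}}(f_\alpha)\le L_{(p,q)}(\F)^{k+2}$; the $d_{s,q}$-case is identical with the supremum replacing the $\ell^p$-sum, and the cases $L_{(p,q)}(\F)=\infty$ or $L_{(s,q)}(\F)=\infty$ are vacuous under the convention $\infty^{k+1}=\infty$ (the base case $k=0$ is just the definition of $L_{(p,q)}(\F)$, $L_{(s,q)}(\F)$). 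Then (iii) and (iv) require no further induction: if $L_{(p,q)}(\F)<\infty$, then by (ii) the maps $f_\alpha$, $\alpha\in{_k}\Omega$, are uniformly Lipschitz in $d_{k+1,p,q}$ with constant $L:=L_{(p,q)}(\F)^{k+1}<\infty$, so given a $d_{k+1,p,q}$-bounded $B$ one fixes $b_0\in B$, invokes the auxiliary statement on the singleton $\{b_0\}$ to see that $\{f_\alpha(b_0):\alpha\in{_k}\Omega\}$ is bounded, and then $d(f_\alpha(x),f_\alpha(b_0))\le L\,\on{diam}_{d_{k+1,p,q}}(B)$ for all such $\alpha,x$ finishes the estimate via the triangle inequality; (iv) is the same with $d_{k+1,s,q}$ in place of $d_{k+1,p,q}$.

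The step I expect to be the main obstacle is the inductive step of the auxiliary statement --- namely, seeing that $d_{k+1,s,1}$-boundedness of $B$ transfers to $d_{k,s,1}$-boundedness of the union of its coordinate slices. This is precisely the place where the \emph{unweighted} metric $d_{s,1}$ is essential: for the weighted metrics $d_{p,q}$ and $d_{s,q}$ the $q^i$-damping destroys this transfer, which is exactly why (iii) and (iv) must carry the extra hypotheses $L_{(p,q)}(\F)<\infty$, resp.\ $L_{(s,q)}(\F)<\infty$, and are instead deduced from the uniform Lipschitz bounds of (ii). A secondary, purely bookkeeping matter is to keep track that all the nested products and the metrics $d_{k,p,q},d_{k,s,q}$ appearing in the recursion are themselves well defined, which is already guaranteed by the construction preceding Lemma \ref{filiplemma1}.
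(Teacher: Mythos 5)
Your proof is correct, and it reorganizes the argument in a way that differs noticeably from the paper's. The paper runs a single simultaneous induction on $k$ proving well-definedness, the Lipschitz bounds and the boundedness of $\{f_\alpha(x):\alpha\in{_k}\Omega,\ x\in B\}$ all at once --- first under $L_{(p,q)}(\F)<\infty$, then (``similarly'') under $L_{(s,q)}(\F)<\infty$ --- and extracts the full statement (i) only at the very end, as the $q=1$ instance of the $(s,q)$-branch, since the standing hypothesis (\ref{abc15}) is exactly $L_{(s,1)}(\F)<\infty$. In that scheme the well-definedness at level $k+1$ is fed by item (iii) (resp.\ (iv)) at level $k$, applied to the bounded coordinate set $\{x_i:i\in\N^*\}$, and the boundedness step is a fairly heavy triangle-inequality estimate involving auxiliary constants $D$ and $K$. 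You instead isolate one auxiliary statement in the unweighted metric $d_{s,1}$ (well-definedness plus uniform boundedness of images of $d_{k+1,s,1}$-bounded sets), prove it once by induction via the transfer of $d_{s,1}$-boundedness to the union of coordinate slices, and then obtain (iii) and (iv) with no further induction: uniform Lipschitz constants from (ii) plus the singleton case of the auxiliary statement. This is a genuine economy --- the weighted-metric boundedness claims become short corollaries rather than a second inductive estimate --- and your point that the uniformity over the (infinite, indeed uncountable for $k\geq 1$) index set ${_k}\Omega$ is precisely what the auxiliary statement must supply is the right one. The one place that needed care, and which you do handle correctly, is that a $d_{k+1,p,q}$- or $d_{k+1,s,q}$-bounded set need not be $d_{k+1,s,1}$-bounded, so the auxiliary statement may only be invoked on the singleton $\{b_0\}$, with the rest of $B$ reached through the Lipschitz estimate of (ii).
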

\begin{proof}
Assume first that $L_{(p,q)}(\F)<\infty$.\\
The proof will be inductive.
First let $k=0$. Point (i) holds by definition of $\F_0$. Since $L_{(p,q)}(\F)<\infty$ by assumption, also $(ii)$ holds. Now let $B\subset X^{\infty}_1=\ell_\infty(X)$ be $d_{1,p,q}$-bounded, and fix $x_0\in B$ and $\alpha\in{_0}\Omega$. For every $y\in B$ and $\beta\in{_0}\Omega=\{1,...,n\}$, we have 
$$d\left(f_{\alpha}(x_0), f_{\beta}(y)\right) \leq d\left(f_{\alpha}(x_0), f_{\beta}(x_0)\right) + d\left(f_{\beta}(x_0), f_{\beta}(y) \right)  $$
$$\leq \max\left\{d\left(f_ {i}(x_0), f_j(x_0)\right): i,j=1,...,n\right\} + L_{(p,q)}(\F)d_{1,p,q}(x_0, y)  $$
$$\leq \max\left\{d\left(f_ {i}(x_0), f_j(x_0)\right): i,j=1,...,n\right\}  +L_{(p,q)}(\F) \on{diam}_{d_{1,p,q}}(B).$$
Hence we get (iii) and the whole assertion for $k=0$ (as we consider now the case $L_{(p,q)}(\F)<\infty$). Now assume that our claims are true for some $k\in\N^*$.
Let $\alpha=(\alpha_0,\alpha_1,...,\alpha_{k+1})\in{_{k+1}}\Omega$ and $x=(x_0,x_1,...)\in X_{k+2}^\infty$. By the inductive assumption, the sequence $(f_{\alpha(0)}(x_0),f_{\alpha(1)}(x_1),...)\in\Xinf$ (we use the assumption for $B:=\{x_i:i\in\N^*\}$, which is $d_{k+1,p,q}$-bounded, compare Remark~\ref{finalaa1}). Hence $f_\alpha(x_0,x_1,...)$ is well defined. Now if  $x=(x_0,x_1,...),y=(y_0,y_1,...)\in X^{\infty}_{k+2}$ then, by the inductive assumption, we have
$$
d(f_\alpha(x),f_\alpha(y))=d(f_{\alpha_0}(f_{\alpha(0)}(x_0),f_{\alpha(1)}(x_1),...),f_{\alpha_0}(f_{\alpha(0)}(y_0),f_{\alpha(1)}(y_1),...)) $$ 
$$\leq L_{p,q}(f_{\alpha_0}) d_{p,q}((f_{\alpha(0)}(x_0),f_{\alpha(1)}(x_1),...),(f_{\alpha(0)}(y_0),f_{\alpha(1)}(y_1),...))
$$
$$\leq L_{(p,q)}(\F)\left(\sum_{i\in\N^*}q^id^p(f_{\alpha(i)}(x_i),f_{\alpha(i)}(y_i))\right)^{1/p}
$$
$$
\leq L_{(p,q)}(\F)L_{(p,q)}(\F)^{k+1}\left(\sum_{i\in\N^*}q^id_{k+1,p,q}^p(x_i,y_i)\right)^{1/p}=L_{(p,q)}(\F)^{k+2}d_{k+2,p,q}(x,y).
$$
Hence $\on{Lip}_{d_{k+2,p,q}}(f_\alpha)\leq L_{(p,q)}(\F)^{k+2}$. Now let $B\subset X^{\infty}_{k+2}$ be bounded with respect to $d_{k+2,p,q}$ and fix $\tilde{x}=(\tilde{x}_0,\tilde{x}_1,...)\in B$ and $\alpha=(\alpha_0,\alpha_1,...,\alpha_{k+1})\in{_{k+1}}\Omega$.\\
By the inductive assumption, there exists $K<\infty$ such that $d(f_\gamma(\tilde{x}_i),f_\eta(\tilde{x}_j))<K$ for every $i,j\in\N^*$ and $\gamma,\eta\in{_k}\Omega$. Also, set
$$
D:=\max\left\{d\left(f_i(f_{\alpha(0)}(\tilde{x}_0),f_{\alpha(1)}(\tilde{x}_1),...),f_j(f_{\alpha(0)}(\tilde{x}_0),f_{\alpha(1)}(\tilde{x}_1),...)\right):i,j=1,...,n\right\}.
$$
For every $y=(y_0,y_1,...)\in B$ and $\beta=(\beta_0,\beta_1,...,\beta_{k+1})\in{_{k+1}}\Omega$, we have
\begin{equation}\label{11fil3}
d(f_\alpha(\tilde{x}),f_\beta(y))\leq d(f_\alpha(\tilde{x}),f_\beta(\tilde{x}))+d(f_\beta(\tilde{x}),f_\beta(y))\leq
\end{equation}
\begin{equation*}\leq d(f_\alpha(\tilde{x}),f_\beta(\tilde{x}))+L_{(p,q)}(\F)^{k+2}d_{k+2,p,q}(\tilde{x},y)\leq d(f_\alpha(\tilde{x}),f_\beta(\tilde{x}))+L_{(p,q)}(\F)^{k+2}\on{diam}_{d_{k+2,p,q}}(B).
\end{equation*}
We also have
$$
d(f_\alpha(\tilde{x}),f_\beta(\tilde{x}))=d(f_{\alpha_0}(f_{\alpha(0)}(\tilde{x}_0),f_{\alpha(1)}(\tilde{x}_1),...),f_{\beta_0}(f_{\beta(0)}(\tilde{x}_0),f_{\beta(1)}(\tilde{x}_1),...)) $$ 
$$
\leq d(f_{\alpha_0}(f_{\alpha(0)}(\tilde{x}_0),f_{\alpha(1)}(\tilde{x}_1),...),f_{\beta_0}(f_{\alpha(0)}(\tilde{x}_0),f_{\alpha(1)}(\tilde{x}_1),...))+$$ $$+d(f_{\beta_0}(f_{\alpha(0)}(\tilde{x}_0),f_{\alpha(1)}(\tilde{x}_1),...),f_{\beta_0}(f_{\beta(0)}(\tilde{x}_0),f_{\beta(1)}(\tilde{x}_1),...))
$$
$$
\leq\max\{d(f_i(f_{\alpha(0)}(\tilde{x}_0),f_{\alpha(1)}(\tilde{x}_1),...),f_j(f_{\alpha(0)}(\tilde{x}_0),f_{\alpha(1)}(\tilde{x}_1),...)):i,j=1,...,n\}+
$$
$$
+L_{(p,q)}(\F)\left(\sum_{i\in\N^*}q^id^p\left(f_{\alpha(i)}(\tilde{x}_i),f_{\beta(i)}(\tilde{x}_i)\right)\right)^{1/p}\leq
D+L_{(p,q)}(\F)\left(\sum_{i\in \N^*}q^iK^p\right)^{1/p}.
$$
All in all,
$$
d(f_\alpha(\tilde{x}),f_\beta(y))\leq D+L_{(p,q)}(\F)K(1-q)^{-1/p}+L_{(p,q)}(\F)^{k+2}\on{diam}_{d_{k+2,p,q}}(B).
$$
Now since the values $D$ and $K$ depend only on $\alpha$ and $\tilde{x}$, we get that the set $\{f_\beta(y):y\in B,\beta\in{_{k+1}}\Omega\}$ is bounded.\\
This gives the assertion for $k+1$.\\
Now we can proceed in a very similar way, but under the assumption $L_{(s,q)}(\F)<\infty$.
As a consequence, when $q=1$, we get the whole point (i) (as we generally assume that $L_{(s,1)}(\F)<\infty$).\\
Now observe that if in the point (ii) $L_{(p,q)}(\F)=\infty$ (or $L_{(s,q)}(\F)=\infty$), then the inequality $\on{Lip}_{d_{k+1,p,q}}(f_\alpha)\leq L_{(p,q)}(\F)^{k+1}$ (or $\on{Lip}_{d_{k+1,s,q}}(f_\alpha)\leq L_{(s,q)}(\F)^{k+1}$) obviously holds. This ends the proof.

\end{proof}
As a corollary of Lemma \ref{filiplemma1}(iii) and Proposition \ref{abc5}(ii), we get:
\begin{lemma}\label{mniejsze srednice}In the above frame,
let $D\subset X$ be bounded, and let $(D_k)$ be defined according to (\ref{abc0}). 
For any $k\in\N^*$ and $\alpha\in{_k}\Omega$,
\begin{itemize}
\item[(i)] $\on{diam}_d(f_\alpha(D_{k+1}))\leq \left(\frac{L_{(p,q)}(\F)}{(1-q)^{1/p}}\right)^{k+1}\on{diam}_d(D)$;
\item[(ii)] $\on{diam}_{d}(f_\alpha(D_{k+1}))\leq L_{(s,q)}(\F)^{k+1}\on{diam}_d(D)$.
\end{itemize}
In particular, if $L_{(s,1)}(\F)<1$ (for example, if $\F$ satisfies (Q) or (P)), then we have 
\begin{equation}\label{abc166}
\lim_{k\to\infty}\sup\{\on{diam}_d(f_{\alpha}(D_{k+1})):\alpha\in{_k}\Omega\}=0
\end{equation}
and, consequently, for any $\alpha \in \mathbf{\Omega}$, $\on{diam}_d(f_{\alpha|_k}(D_{k+1})) \to 0$, where $\alpha|_k$ denotes the restriction of $\alpha$ to the first $k+1$ elements.\end{lemma}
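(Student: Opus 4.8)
The plan is to obtain (i) and (ii) directly from the two cited results and then to deduce the limiting assertion. For (i): as $D$ is bounded, Lemma~\ref{filiplemma1}(iii) applied with $k+1$ in place of $k$ gives $D_{k+1}\subset X^\infty_{k+1}$ together with $\on{diam}_{d_{k+1,p,q}}(D_{k+1})=(1-q)^{-(k+1)/p}\on{diam}_d(D)$, while Proposition~\ref{abc5}(ii) gives $\on{Lip}_{d_{k+1,p,q}}(f_\alpha)\leq L_{(p,q)}(\F)^{k+1}$ for $\alpha\in{_k\Omega}$. Since a $c$-Lipschitz map enlarges no diameter by more than the factor $c$, these combine to
\[
\on{diam}_d(f_\alpha(D_{k+1}))\ \leq\ L_{(p,q)}(\F)^{k+1}(1-q)^{-(k+1)/p}\on{diam}_d(D)\ =\ \left(\frac{L_{(p,q)}(\F)}{(1-q)^{1/p}}\right)^{k+1}\on{diam}_d(D),
\]
which is (i) (and is vacuously true when $L_{(p,q)}(\F)=\infty$). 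Part (ii) is obtained in exactly the same fashion, this time using $\on{diam}_{d_{k+1,s,q}}(D_{k+1})=\on{diam}_d(D)$ and $\on{Lip}_{d_{k+1,s,q}}(f_\alpha)\leq L_{(s,q)}(\F)^{k+1}$.

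For the last assertion, assume $L_{(s,1)}(\F)<1$. When $\F$ satisfies (Q) or (P) it suffices to invoke (ii) (respectively (i)) with the witnessing $q<1$, since then the geometric factor $L_{(s,q)}(\F)^{k+1}$ (respectively $(L_{(p,q)}(\F)(1-q)^{-1/p})^{k+1}$) tends to $0$ uniformly in $\alpha\in{_k\Omega}$. In the general case $L_{(s,q)}(\F)$ may be infinite for every $q<1$ (cf.\ the supremum map of Remark~\ref{se4}), so I would argue directly with the metric $d_{s,1}$. Put $S_k:=\sup\{\on{diam}_d(f_\alpha(D_{k+1})):\alpha\in{_k\Omega}\}$. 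Since $L_{s,1}(f_i)\leq L_{(s,1)}(\F)<\infty$ and $\on{diam}_{d_{1,s,1}}(D_1)=\on{diam}_d(D)$ (Lemma~\ref{filiplemma1}(iii) with $q=1$), we have $S_0\leq L_{(s,1)}(\F)\on{diam}_d(D)<\infty$. For $k\geq1$ and $\alpha=(\alpha_0,\dots,\alpha_k)\in{_k\Omega}$ the recursive formula~(\ref{abcde1}) reads $f_\alpha(x_0,x_1,\dots)=f_{\alpha_0}(f_{\alpha(0)}(x_0),f_{\alpha(1)}(x_1),\dots)$ with $\alpha(i)\in{_{k-1}\Omega}$; since $d_{s,1}$ is a supremum metric, $L_{s,1}(f_{\alpha_0})\leq L_{(s,1)}(\F)$, and the nested evaluation is legitimate by Proposition~\ref{abc5}(i), for $x=(x_i),y=(y_i)\in D_{k+1}$ (so $x_i,y_i\in D_k\subset X^\infty_k$) we get
\[
d(f_\alpha(x),f_\alpha(y))\ \leq\ L_{(s,1)}(\F)\sup_{i\in\N^*}d(f_{\alpha(i)}(x_i),f_{\alpha(i)}(y_i))\ \leq\ L_{(s,1)}(\F)\,S_{k-1},
\]
whence $S_k\leq L_{(s,1)}(\F)S_{k-1}$, and iterating, $S_k\leq L_{(s,1)}(\F)^{k+1}\on{diam}_d(D)\to0$, which is~(\ref{abc166}). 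Finally, for any $\alpha\in\mathbf{\Omega}$ we have $\alpha|_k\in{_k\Omega}$, so $\on{diam}_d(f_{\alpha|_k}(D_{k+1}))\leq S_k\to0$.

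The computations are routine; the only substantive point is the observation that the limiting statement cannot, in general, be reduced to (i) or (ii), because passing to some $q<1$ can destroy finiteness of the relevant Lipschitz constant even when $L_{(s,1)}(\F)<1$. One therefore has to work with $d_{s,1}$ and exploit that it is a supremum metric, which is exactly what lets the coordinatewise supremum be pulled out before applying the Lipschitz bound of $f_{\alpha_0}$; the remaining care is bookkeeping with the indices, namely that $f_\alpha$ with $\alpha\in{_k\Omega}$ acts on $X^\infty_{k+1}$, that $\alpha(i)\in{_{k-1}\Omega}$, and that $D_k\subset X^\infty_k$.
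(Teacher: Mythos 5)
Your derivation of (i) and (ii) is exactly the paper's: the lemma is stated there as an immediate corollary of Lemma~\ref{filiplemma1}(iii) and Proposition~\ref{abc5}(ii), combined via the fact that a $c$-Lipschitz map scales diameters by at most $c$. The only divergence is in the final claim: the paper's convention is that $d_{s,q}$ (and hence $d_{k,s,q}$ and $L_{(s,q)}$) is allowed for all $q\in(0,1]$, and the proof of Proposition~\ref{abc5} explicitly covers the case $q=1$ under the standing assumption $L_{(s,1)}(\F)<\infty$; so (\ref{abc166}) is obtained simply by reading (ii) at $q=1$, giving $\sup_{\alpha\in{_k}\Omega}\on{diam}_d(f_\alpha(D_{k+1}))\leq L_{(s,1)}(\F)^{k+1}\on{diam}_d(D)\to 0$. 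Your separate induction on $S_k$ with the supremum metric $d_{s,1}$ is correct, but it merely re-proves that special case of Proposition~\ref{abc5}(ii); that said, your underlying caution is well founded --- under $(S_1)$ alone one indeed cannot pass to any $q<1$ (the constants $L_{(s,q)}(\F)$ may be infinite, as for the supremum map of Remark~\ref{se4}), so the $q=1$ instance is genuinely the one that is needed.
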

\begin{remark}\emph{In view of the above considerations, the family $\F_k$ can be considered as a counterpart of the family of all compositions $\{g_{\alpha_0}\circ...\circ g_{\alpha_k}:(\alpha_0,...,\alpha_k)\in\{1,...,n\}^{k+1}\}$ in the classical case, and appropriate families from \cite{SS2}.}
\end{remark}
\subsection{The division of the fractal set}
Recall that in the classical case, if $\{g_1,...,g_n\}$ is an IFS consisting of Banach contractions and $A$ is its fractal, then:
\begin{itemize}
\item[(i)] for every $k\in\N^*$ and $(\alpha_0,...,\alpha_k)\in\{1,...,n\}^{k+1}$, 
$$g_{\alpha_0}\circ...\circ g_{\alpha_k}(A)=\bigcup_{\alpha\in\{1,...,n\}}g_{\alpha_0}\circ...\circ g_{\alpha_k}\circ g_{\alpha}(A);$$
\item[(ii)] for every $k\in\N^*$, $A=\bigcup_{(\alpha_0,...,\alpha_k)\in\{1,...,n\}^{k+1}}g_{\alpha_0}\circ...\circ g_{\alpha_k}(A)$;
\item[(iii)] if $k\in\N^*$ and $(\alpha_0,...,\alpha_k)\in\{1,...,n\}^{k+1}$, then $\on{diam}(g_{\alpha_0}\circ...\circ g_{\alpha_k}(A))\leq L^{k+1}\on{diam}(A)$, where $L~:=~\max\{Lip(g_i):i=1,...,n\}$.
\end{itemize}
This gives the natural division of the fractal $A$ into smaller and smaller pieces. In \cite{SS2} we constructed a counterpart of this division for GIFSs (see \cite[Proposition 3.3]{SS2}). Here we will do it for GIFSs$_\infty$.\\

In this section we assume that a \gifs $\;\F=\{f_1,...,f_n\}$ satisfies $(S_1)$, that is, $L_{(s,1)}(\F)<1$ and each $f_i$ satisfies (C1), and $A_\F$ is its attractor.\\

At first, let $(A_k)$ be the family of sets defined according to (\ref{abc0}), for a set $$A:=A_\F.$$ 
The following lemma is a straightforward consequence of Lemma \ref{filiplemma1} and a fact that $A_\F$ is compact (hence bounded).
\begin{lemma}\label{filiplemma3}In the above frame:
\begin{itemize}
\item[(i)] for every $k\in\N^*$, $A_k\subset X_k^{\infty}$;
\item[(ii)] if $q<1$ and $k\in\N$, then $d_{k,p,q}$ and $d_{k,s,q}$ induce the same compact topology on $A_k$ - exactly the Tychonoff product topology;
\item[(iii)] 
$
\on{diam}_{d_{k,p,q}}(A_k)=\left(1-q\right)^{-\frac{k}{p}}\on{diam}_d(A_\F)\;\;\;\;\mbox{and}\;\;
\on{diam}_{d_{k,s,q}}(A_k)=\on{diam}_d(A_\F).
$
\end{itemize}
\end{lemma}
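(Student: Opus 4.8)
The plan is to obtain Lemma \ref{filiplemma3} as an immediate corollary of earlier general facts, without any new computation. The key observation is that $A_\F$ is compact, hence bounded (as a subset of the metric space $(X,d)$), so we are exactly in the situation covered by Lemma \ref{filiplemma1}(iii) applied with $D:=A_\F$. That lemma already tells us that $D_k\subset X^\infty_k$ for every $k\in\N^*$ and gives the two diameter formulas verbatim, so parts (i) and (iii) require nothing more than quoting it with the specific choice $D=A_\F$ (noting that here $A_k$ is defined precisely as $D_k$ in the notation of (\ref{abc0})).

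For part (ii), the point is that $A_k$ is not merely a subset of $X^\infty_k$ but is in fact compact with the Tychonoff product topology. The cleanest route is: since $A_\F\in\K(X)$, the set $A:=A_\F$ with the restricted metric is a compact, hence bounded, metric space in its own right; by Lemma \ref{filiplemma1}(ii) (or the construction in Remark \ref{finalaa1}) the iterated spaces $A^\infty_k$ coincide with the plain products $A_k=\prod A_{k-1}$, and by Lemma \ref{filiplemma1}(iv) the metrics $d_{k,p,q}$ and $d_{k,s,q}$ (for $q<1$) both induce the Tychonoff product topology on $A_k$. Finally, Lemma \ref{filiplemma1}(iii) (the subspace remark in its proof) ensures that the metric $d_{k,p,q}$ (resp. $d_{k,s,q}$) on $A_k$ computed intrinsically agrees with the one inherited from $X^\infty_k$, so the topology on $A_k$ as a subspace of $X^\infty_k$ is the same Tychonoff topology; compactness then follows from Tychonoff's theorem since $A$ is compact.

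I expect essentially no obstacle here: the statement is explicitly flagged in the text as ``a straightforward consequence of Lemma \ref{filiplemma1} and a fact that $A_\F$ is compact (hence bounded)'', and every ingredient (the diameter formulas, the identification $A_k=A^\infty_k$, the Tychonoff topology, the subspace compatibility of the metrics) is already in hand. The only mild subtlety worth spelling out is the distinction between computing $d_{k,p,q}$ on $A_k$ intrinsically versus as a subspace of $X^\infty_k$ — but this is precisely the content of the last sentence of the proof of Lemma \ref{filiplemma1}, which establishes that if $(Y,d)$ is a metric subspace of $(X,d)$ then $(Y^\infty_k,d_{k,s,q})$ (resp.\ $d_{k,p,q}$) is a subspace of $(X^\infty_k,d_{k,s,q})$ (resp.\ $d_{k,p,q}$). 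Applying this with $Y=A_\F$ closes the gap, and the proof is complete in a couple of lines.
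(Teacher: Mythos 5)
Your proposal is correct and follows exactly the route the paper intends: the paper gives no separate proof, stating only that the lemma is "a straightforward consequence of Lemma \ref{filiplemma1} and a fact that $A_\F$ is compact (hence bounded)," which is precisely your argument (parts (i) and (iii) from Lemma \ref{filiplemma1}(iii) with $D=A_\F$; part (ii) from Lemma \ref{filiplemma1}(ii),(iv) applied to the bounded space $A_\F$, the subspace compatibility noted in the proof of Lemma \ref{filiplemma1}(iii), and Tychonoff's theorem). No gaps.
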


Now for every $k\in\N^*$ and every $\alpha\in {_k\Omega}$, define
$$A_\alpha := \overline{f_\alpha(A_{k+1})}.$$
It turns out that in the case when $\F$ satisfies $(S_2)$, we can get rid of the closure.
\begin{lemma}\label{juzniemampomyslu}
If $\F$ satisfies $(S_2)$ (that is, $f_1,...,f_n$ additionally satisfy (C2)), then for every $k\in\N^*$ and every $\alpha\in {_k\Omega}$, 
$$A_\alpha = f_\alpha(A_{k+1}).$$
\end{lemma}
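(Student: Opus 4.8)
The plan is to prove, by induction on $k$, the slightly stronger statement that for every $\alpha\in{_k\Omega}$ the set $f_\alpha(A_{k+1})$ is compact and coincides with $A_\alpha$; the asserted equality $A_\alpha=f_\alpha(A_{k+1})$ is then immediate, since by definition $A_\alpha=\overline{f_\alpha(A_{k+1})}$.

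For $k=0$ we have $\F_0=\{f_1,\dots,f_n\}$, so $f_\alpha=f_{\alpha_0}$ for $\alpha=(\alpha_0)\in{_0\Omega}=\{1,\dots,n\}$, and by (\ref{abc0}) applied to $A:=A_\F$ we have $A_1=\prod_{i=0}^\infty A_\F$. The constant sequence $(A_\F,A_\F,\dots)$ lies in $\KXinf$, so condition (C2) gives $f_{\alpha_0}\bigl(\prod_{i=0}^\infty A_\F\bigr)\in\K(X)$; in particular this image is closed, whence $A_{\alpha_0}=\overline{f_{\alpha_0}(A_1)}=f_{\alpha_0}(A_1)$.

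For the inductive step, assume the claim for some $k\in\N^*$ and fix $\alpha=(\alpha_0,\alpha_1,\dots,\alpha_{k+1})\in{_{k+1}\Omega}$. Recall that $A_{k+2}=\prod_{i=0}^\infty A_{k+1}$ (and $A_{k+2}\subset X^\infty_{k+2}$ by Lemma \ref{filiplemma3}(i)), and that by (\ref{abcde1}), $f_\alpha(x_0,x_1,\dots)=f_{\alpha_0}\bigl(f_{\alpha(0)}(x_0),f_{\alpha(1)}(x_1),\dots\bigr)$ with each $\alpha(i)\in{_k\Omega}$. First I would check that $(A_{\alpha(i)})_{i\in\N^*}\in\KXinf$: by the inductive hypothesis each $A_{\alpha(i)}=f_{\alpha(i)}(A_{k+1})$ is compact, and $\bigcup_{i}A_{\alpha(i)}\subset\{f_\gamma(x):\gamma\in{_k\Omega},\ x\in A_{k+1}\}$, which is bounded by Proposition \ref{abc5}(iii)--(iv) applied to the bounded set $B:=A_{k+1}$ (the diameters are bounded also by Lemma \ref{mniejsze srednice}). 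Hence $\prod_{i=0}^\infty A_{\alpha(i)}\subset\Xinf$. Using that $f_{\alpha(i)}(A_{k+1})=A_{\alpha(i)}$ for every $i$ (the inductive hypothesis, as an equality of sets), the coordinatewise map $(x_0,x_1,\dots)\mapsto\bigl(f_{\alpha(0)}(x_0),f_{\alpha(1)}(x_1),\dots\bigr)$ carries $\prod_{i=0}^\infty A_{k+1}$ onto $\prod_{i=0}^\infty A_{\alpha(i)}$, and therefore
$$f_\alpha(A_{k+2})=f_{\alpha_0}\left(\prod_{i=0}^\infty A_{\alpha(i)}\right).$$
Since $(A_{\alpha(i)})_i\in\KXinf$ and $f_{\alpha_0}$ (being one of $f_1,\dots,f_n$) satisfies (C2), the right-hand side belongs to $\K(X)$; thus $f_\alpha(A_{k+2})$ is compact, hence closed, and $A_\alpha=\overline{f_\alpha(A_{k+2})}=f_\alpha(A_{k+2})$. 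This closes the induction.

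The only step that requires genuine care is verifying that $(A_{\alpha(i)})_{i\in\N^*}$ is a legitimate element of $\KXinf$ — that is, the uniform boundedness of the family $(A_{\alpha(i)})_i$ — which is precisely where Proposition \ref{abc5} enters; everything else is a routine unfolding of the recursive definition (\ref{abcde1}).
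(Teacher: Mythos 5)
Your proof is correct and follows exactly the route the paper intends — the paper's own proof is the single sentence ``use (\ref{abcde1}) and a simple inductive argument,'' and you have supplied the details, including the one genuinely nontrivial verification (that $(A_{\alpha(i)})_{i\in\N^*}$ is a bounded sequence of compacta, so that (C2) applies to $f_{\alpha_0}$ on $\prod_{i}A_{\alpha(i)}$), which is handled correctly via Proposition \ref{abc5}.
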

\begin{proof}
We just have to use (\ref{abcde1}) and simple inductive argument.
\end{proof}

By definition, $A_{\F} = \overline{f_1(A_{\F} \times A_{\F} \times ...)} \cup ... \cup \overline{f_n(A_{\F}\times A_{\F} \times ...)} = A_{(1)} \cup ... \cup A_{(n)}$. 
It turns out that the following holds (the symbol $\alpha\hat{\;}\beta$ denotes the concatenation of a sequence $\alpha=(\alpha_0,...,\alpha_k)$ with $\beta$, that is $\alpha\hat{\;}\beta=(\alpha_0,...,\alpha_k,\beta)$):
\begin{theorem}\label{kafelkowanie}
In the above frame, let $k\in\N^*$.
\begin{itemize}
\item[(i)] For every $\alpha\in{_k}\Omega$, $$\on{diam}_d(A_\alpha)\leq \left(\frac{L_{(p,q)}(\F)}{(1-q)^{1/p}}\right)^{k+1}\on{diam}_d(A_\F)\;\;\;\;\mbox{ and }\;\;\;\;\on{diam}_d(A_\alpha)\leq L_{(s,q)}(\F)^{k+1}\on{diam}_d(A_\F).$$
\item[(ii)] For every $\alpha\in {_k}\Omega$, 
\begin{itemize}
\item[(iia)] $A_\alpha = \overline{\bigcup_{\beta\in \Omega{_{k+1}}} A_{\alpha \hat{\;} \beta}}$;
\item[(iib)] $A_\F=\overline{\bigcup_{\alpha\in{_k}\Omega}A_\alpha}$.
\end{itemize}
\item[(iii)] If additionally $\F$ satisfies $(S_2)$, then for every $\alpha\in {_k}\Omega$,
\begin{itemize}
\item[(iiia)] $A_\alpha=f_\alpha(A_{k+1})$;
\item[(iiib)] $A_\alpha = \bigcup_{\beta\in \Omega{_{k+1}}} A_{\alpha \hat{\;} \beta}$;
\item[(iiic)] $A_\F=\bigcup_{\alpha\in{_k}\Omega}A_\alpha$.
\end{itemize}

\end{itemize}
\end{theorem}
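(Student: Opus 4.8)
The plan is as follows. Part (i) is immediate: $A_\F$ is compact, hence bounded, so applying Lemma~\ref{mniejsze srednice} with $D:=A_\F$ (so that $D_{k+1}=A_{k+1}$) bounds $\on{diam}_d(f_\alpha(A_{k+1}))$ by both of the stated quantities, and since $A_\alpha=\overline{f_\alpha(A_{k+1})}$ has the same diameter as $f_\alpha(A_{k+1})$, we are done.

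The heart of (ii) is the following claim, proved by induction on $k$: for every $\alpha\in{_k}\Omega$,
$$\overline{f_\alpha(A_{k+1})}=\overline{\,\bigcup_{\beta\in\Omega_{k+1}}f_{\alpha\hat{\;}\beta}(A_{k+2})\,}.$$
I would first isolate three routine facts: (a) the identity $\bigcup_{\beta\in\Omega_{m+1}}\prod_i S_{\beta^{(i)}}=\prod_i\bigcup_{j=1}^n S_j$, valid because the coordinates $\beta^{(i)}\in\Omega_m$ of $\beta\in\Omega_{m+1}$ range independently; (b) if $\overline{D_i}=K_i$ for subsets of $X$ with each $K_i$ compact and $\bigcup_i K_i$ bounded, then $\prod_i D_i$ is $d_{s,1}$-dense in $\prod_iK_i$ (given $x=(x_i)\in\prod_iK_i$ and $m\in\N$, choose $x_i^{(m)}\in D_i$ with $d(x_i^{(m)},x_i)<1/m$, so that $d_{s,1}(x^{(m)},x)\le 1/m$); and (c) each $f_j$ is $d_{s,1}$-continuous (being $d_{s,1}$-Lipschitz by $(S_1)$), hence $\overline{f_j(S)}=\overline{f_j(\overline S)}$. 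In the base case $k=0$ one has $f_{(\alpha_0)}=f_{\alpha_0}$; unwinding (\ref{abcde1}) and using (a) gives $\bigcup_{\beta\in\Omega_1}f_{(\alpha_0,\beta)}(A_2)=f_{\alpha_0}\bigl(\prod_i\bigcup_{j=1}^n f_j(A_1)\bigr)$, and since the fixed point equation $\F(A_\F,A_\F,\dots)=A_\F$ together with $A_1=\prod_iA_\F$ shows $\overline{\bigcup_j f_j(A_1)}=A_\F$, facts (b) and (c) yield $\overline{f_{\alpha_0}\bigl(\prod_i\bigcup_j f_j(A_1)\bigr)}=\overline{f_{\alpha_0}(\prod_iA_\F)}=\overline{f_{\alpha_0}(A_1)}$. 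For the inductive step, given $\alpha\in{_{k+1}}\Omega$, unwinding (\ref{abcde1}) with $(\alpha\hat{\;}\beta)(i)=\alpha(i)\hat{\;}\beta^{(i)}$ gives $f_\alpha(A_{k+2})=f_{\alpha_0}\bigl(\prod_i f_{\alpha(i)}(A_{k+1})\bigr)$ and $\bigcup_{\beta\in\Omega_{k+2}}f_{\alpha\hat{\;}\beta}(A_{k+3})=f_{\alpha_0}\bigl(\prod_i\bigcup_{\gamma\in\Omega_{k+1}}f_{\alpha(i)\hat{\;}\gamma}(A_{k+2})\bigr)$, where each $\alpha(i)\in{_k}\Omega$; the induction hypothesis applied to each $\alpha(i)$ says exactly that $D_i:=\bigcup_\gamma f_{\alpha(i)\hat{\;}\gamma}(A_{k+2})$ and $K_i:=f_{\alpha(i)}(A_{k+1})$ have the common closure $A_{\alpha(i)}$, which is compact, while $\bigcup_iA_{\alpha(i)}$ is bounded by Proposition~\ref{abc5}; so (b) and (c) again give the claim one level up. Granting the claim, (iia) follows because $\overline{\bigcup_\beta f_{\alpha\hat{\;}\beta}(A_{k+2})}=\overline{\bigcup_\beta A_{\alpha\hat{\;}\beta}}$, and (iib) follows by a second induction on $k$ from the base case $A_\F=\bigcup_i\overline{f_i(A_1)}=\bigcup_iA_{(i)}$ and (iia), since $A_\F=\overline{\bigcup_{\gamma\in{_{k-1}}\Omega}A_\gamma}=\overline{\bigcup_\gamma\overline{\bigcup_{\beta\in\Omega_k}A_{\gamma\hat{\;}\beta}}}=\overline{\bigcup_{\alpha\in{_k}\Omega}A_\alpha}$.

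For (iii), (iiia) is precisely Lemma~\ref{juzniemampomyslu}. Under $(S_2)$ each set $f_i(\prod_kK_k)$ is already compact, so every closure in the argument above is superfluous: the fixed point equation becomes $A_\F=\bigcup_if_i(A_1)$, Lemma~\ref{juzniemampomyslu} gives $A_{\alpha(i)}=f_{\alpha(i)}(A_{k+1})$ and $A_{\alpha(i)\hat{\;}\gamma}=f_{\alpha(i)\hat{\;}\gamma}(A_{k+2})$, hence in the induction $D_i=K_i$ verbatim and one obtains $A_\alpha=\bigcup_{\beta\in\Omega_{k+1}}A_{\alpha\hat{\;}\beta}$, which is (iiib); then (iiic) follows from (iiib) and $A_\F=\bigcup_iA_{(i)}$ exactly as (iib) was deduced from (iia).

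The main obstacle is handling the closures in (ii) and (iii): because $d_{s,1}$ does not induce the product topology on $\ell_\infty(X)$, one cannot simply assert $\overline{\prod_i S_i}=\prod_i\overline{S_i}$, and the delicate point is fact (b) — that $\prod_i D_i$ is nonetheless $d_{s,1}$-dense in $\prod_i\overline{D_i}$ when the closures are compact, which works because each coordinate can be approximated with an error uniform in $i$. The remaining work — the combinatorial identity (a) and the careful tracking of the indices ${_k}\Omega$, the operation $\alpha\mapsto\alpha(i)$, concatenation, and the nested products $A_k\subset X^\infty_k$ — is routine.
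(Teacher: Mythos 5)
Your proof is correct and follows essentially the same route as the paper's: part (i) via Lemma~\ref{mniejsze srednice}, and parts (ii)--(iii) via induction on $k$ resting on the identity $\prod_i\overline{B_i}=\overline{\prod_i B_i}$ (your fact (b) is precisely the lemma the paper states and proves immediately before this theorem), continuity of the $f_j$ with respect to $d_{s,1}$, and distributivity of infinite products over unions. The only differences are organizational — you obtain the two inclusions of the key claim simultaneously through an equality of closures rather than separately, and fact (a) has a harmless notational slip ($\bigcup_{j=1}^n S_j$ should read $\bigcup_{\gamma\in\Omega_m}S_\gamma$ for general $m$, as in your own inductive step).
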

\begin{remark}\emph{
The point (iii) shows another advantage of the assumption $(S_2)$ - in the definition of $A_\alpha$, and in appropriate divisions we do not need to take any closures.}
\end{remark}
Before we give a proof, we state a lemma:
\begin{lemma}
Assume that $B_0,B_1,...$ are subsets of $X$ such that $\bigcup_{i=0}^\infty B_i$ is bounded. Then 
$$\prod_{i=0}\overline{B_i}=\overline{\prod_{i=0}^\infty B_i},$$
where the last closure is taken with respect to $d_{p,q}$ ($q<1$) or $d_{s,q}$ ($q\leq 1$).
\end{lemma}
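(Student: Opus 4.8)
The plan is to prove the two inclusions separately. As a preliminary, note that since $\bigcup_{i=0}^{\infty}B_i$ is bounded, so is its closure, hence $\bigcup_{i=0}^{\infty}\overline{B_i}$ is bounded too; thus both $\prod_{i=0}^{\infty}B_i$ and $\prod_{i=0}^{\infty}\overline{B_i}$ are subsets of $\Xinf$, and the claim makes sense for each of $d_{p,q}$ ($q<1$) and $d_{s,q}$ ($q\leq 1$). If some $B_i$ is empty, both sides equal $\emptyset$, so assume every $B_i\neq\emptyset$ and put $M:=\on{diam}_d\big(\bigcup_{i=0}^{\infty}B_i\big)<\infty$. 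For the inclusion $\overline{\prod_{i=0}^{\infty}B_i}\subseteq\prod_{i=0}^{\infty}\overline{B_i}$ I would check that the right-hand side is closed: if a sequence $(x^{(n)})_n$ in $\prod_{i=0}^{\infty}\overline{B_i}$ converges to $x=(x_i)$ in the metric under consideration, then, since convergence in $d_{p,q}$ (by \cite[Proposition 2.3]{JMS}) and in $d_{s,q}$ (directly from its definition) forces convergence at each coordinate, for every $i$ we have $x_i^{(n)}\to x_i$ with $x_i^{(n)}\in\overline{B_i}$, whence $x_i\in\overline{B_i}$ and $x\in\prod_{i=0}^{\infty}\overline{B_i}$. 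Since $\prod_{i=0}^{\infty}\overline{B_i}$ is therefore closed and contains $\prod_{i=0}^{\infty}B_i$, this inclusion follows.

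For the reverse inclusion $\prod_{i=0}^{\infty}\overline{B_i}\subseteq\overline{\prod_{i=0}^{\infty}B_i}$, fix $x=(x_i)\in\prod_{i=0}^{\infty}\overline{B_i}$ and $\varepsilon>0$, and construct $y=(y_i)\in\prod_{i=0}^{\infty}B_i$ with $d_{p,q}(x,y)<\varepsilon$ (resp.\ $d_{s,q}(x,y)\leq\varepsilon$). The key observation is that every $x_i$ and every element of $B_i$ lie in $\overline{\bigcup_{j=0}^{\infty}B_j}$, a set of diameter $M$, so $d(x_k,y_k)\leq M$ for \emph{any} choice $y_k\in B_k$; since $q<1$ this lets us choose $N$ so large that the tail contribution --- $\sum_{k\geq N}q^kM^p$ in the $d_{p,q}$ case, or $q^N M$ in the $d_{s,q}$ case with $q<1$ --- drops below $\varepsilon/2$, whereas for $d_{s,1}$ no truncation is needed. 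For the remaining indices ($k<N$, or all $k$ when $q=1$) I would use $x_k\in\overline{B_k}$ to pick $y_k\in B_k$ with $d(x_k,y_k)$ small enough that the corresponding finite portion of the metric stays below $\varepsilon/2$, and pick $y_k\in B_k$ arbitrarily for $k\geq N$; adding the two portions (or taking a maximum, in the supremum case) gives the required $y$.

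I expect no serious obstacle: the whole argument rests on two facts --- convergence in all the metrics considered is coordinatewise (which is what forces $\prod_{i=0}^{\infty}\overline{B_i}$ to be closed), and the boundedness hypothesis makes the tail of the product metrically negligible, uniformly in the tail coordinates. The only point requiring mild care is carrying the head/tail split uniformly through the three regimes, since for $d_{s,1}$ there is no tail to discard. Combining the two inclusions gives $\prod_{i=0}^{\infty}\overline{B_i}=\overline{\prod_{i=0}^{\infty}B_i}$.
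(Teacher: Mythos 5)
Your proof is correct and follows essentially the same route as the paper: both inclusions come down to the facts that convergence in each metric is coordinatewise (giving $\overline{\prod_i B_i}\subseteq\prod_i\overline{B_i}$) and that one can approximate all coordinates simultaneously, with the tail controlled by boundedness and the factor $q^k$ (giving the reverse inclusion). The paper compresses this into a single chain of topological equivalences (and notes that for $q<1$ it is just the Tychonoff-product fact), whereas you spell out the head/tail estimate explicitly --- which in particular makes the $d_{s,1}$ case, where the topology is strictly finer than the product topology, more transparent than the paper's one-line argument.
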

\begin{proof}
For any $x=(x_i)$, we have:
$$
(x_i)\in \overline{\prod_{i=0}^\infty{B_i}}\;\;\Leftrightarrow\;\;\forall_{x\in V,\;V-\mbox{open}}\;\; V\cap \prod_{i=0}^\infty{B_i}\neq\emptyset$$ $$\Leftrightarrow\;\;\forall_{i\in\N^*}\;\forall_{x_i\in V_i,\;V_i-\mbox{open in }X}\;V_i\cap B_i\neq\emptyset\;\;\Leftrightarrow\;\;\forall_{i\in\N^*}\;x_i\in \overline{B_i}\;\;\Leftrightarrow\;\;(x_i)\in\prod^\infty_{i=0}\overline{B_i}.
$$
\end{proof}
It is worth to note that in the case when $q<1$, the result is well known as the topology on appropriate product is exactly the Tychonoff product topology. 
\begin{proof}(of Theorem \ref{kafelkowanie})
To prove (i), we just have to use Lemma \ref{mniejsze srednice}(i),(ii) (we take the closure, but it does not change the diameter).\\
Now we prove (ii). We first show that for every $k\in\N^*$ and $\alpha\in{_k}\Omega$,
\begin{equation}\label{zz1}
\overline{\bigcup_{\beta\in\Omega_{k+1}}f_{\alpha\hat{\;} \beta}(A_{k+2})}=\overline{f_\alpha(A_{k+1}).}
\end{equation}
Let $k=0$ and take $i\in {_0}\Omega=\{1,...,n\}$. On one hand by continuity of $f_i, i=1,...,n$, we have
$$
f_i(A_1)=f_i(A_\F\times A_\F\times...)=f_i\left(\bigcup_{\beta_0\in_0\Omega}\overline{f_{\beta_0}(A_1)}\times \bigcup_{\beta_1\in_0\Omega} \overline{f_{\beta_1}(A_1)}\times...\right)$$
$$=\bigcup_{\beta_0\in _0\Omega}\bigcup_{\beta_1\in_0\Omega}...\;f_i\left(\overline{f_{\beta_0}(A_1)}\times \overline{f_{\beta_1}(A_1)}\times...\right)=\bigcup_{(\beta_0,\beta_1,...)\in \Omega_1}f_i\left(\overline{f_{\beta_0}(A_1)\times f_{\beta_1}(A_1)\times...}\right)
$$
$$
\subset \bigcup_{(\beta_0,\beta_1,...)\in \Omega_1}\overline{f_i\left(f_{\beta_0}(A_1)\times f_{\beta_1}(A_1)\times...\right)}=\bigcup_{\beta\in\Omega_1}\overline{f_{(i,\beta)}(A_2)},
$$
which gives us 
$$
\overline{f_i(A_1)}\subset \overline{\bigcup_{\beta\in \Omega_1}\overline{f_{(i,\beta)}(A_2)}}=\overline{\bigcup_{\beta\in \Omega_1}{f_{(i,\beta)}(A_2)}}.
$$
On the other hand,
$$
\bigcup_{\beta\in\Omega_1}f_{(i,\beta)}(A_2)=\bigcup_{(\beta_0,\beta_1,...)\in\Omega_1}f_i(f_{\beta_0}(A_1)\times f_{\beta_1}(A_1)\times....)$$ $$=f_i\left(\bigcup_{\beta_0\in_0\Omega}f_{\beta_0}(A_1)\times\bigcup_{\beta_1\in_0\Omega}f_{\beta_1}(A_1)\times...\right)\subset f_i\left(A_\F\times A_\F\times...\right)=f_i(A_1),
$$
whence 
$$
\overline{\bigcup_{\beta\in\Omega_1}f_{(i,\beta)}(A_2)} \subset \overline{f_i(A_1)}.
$$
In particular, we get (\ref{zz1}) for $k=0$. Now assume that it holds for some $k\in\N^*$, and choose $\alpha\in {_{k+1}}\Omega$. On one hand, we have
$$
f_\alpha(A_{k+2})=f_{\alpha_0}\left(f_{\alpha(0)}(A_{k+1})\times f_{\alpha(1)}(A_{k+1})\times...\right)=f_{\alpha_0}\left(f_{(\alpha_1^{(0)},...,\alpha_{k+1}^{(0)})}(A_{k+1})\times f_{(\alpha_1^{(1)},...,\alpha_{k+1}^{(1)})}(A_{k+1})\times...\right)$$
$$
\subset f_{\alpha_0}\left(\overline{f_{(\alpha_1^{(0)},...,\alpha_{k+1}^{(0)})}(A_{k+1})}\times\overline{ f_{(\alpha_1^{(1)},...,\alpha_{k+1}^{(1)})}(A_{k+1})}\times...\right)
$$ 
$$=f_{\alpha_0}\left(\overline{\bigcup_{\beta_0\in\Omega_{k+1}}f_{(\alpha_1^{(0)},...,\alpha_{k+1}^{(0)},\beta_0)}(A_{k+2})}\times\overline{\bigcup_{\beta_1\in\Omega_{k+1}} f_{(\alpha_1^{(1)},...,\alpha_{k+1}^{(1)},\beta_1)}(A_{k+2})}\times...\right)
$$
$$
\subset \overline{f_{\alpha_0}\left(\bigcup_{\beta_0\in\Omega_{k+1}}f_{(\alpha_1^{(0)},...,\alpha_{k+1}^{(0)},\beta_0)}(A_{k+2})\times\bigcup_{\beta_1\in\Omega_{k+1}} f_{(\alpha_1^{(1)},...,\alpha_{k+1}^{(1)},\beta_1)}(A_{k+2})\times...\right)}
$$
$$
=\overline{\bigcup_{(\beta_0,\beta_1,...)\in\Omega_{k+2}}f_{\alpha_0}\left(f_{(\alpha\hat{\;} \beta)(0)}(A_{k+2})\times f_{(\alpha\hat{\;} \beta)(1)}(A_{k+2})\times...\right)}=\overline{\bigcup_{\beta\in\Omega_{k+2}}f_{\alpha\hat{\;} \beta}(A_{k+3})},
$$
so 
$$\overline{f_\alpha(A_{k+2})}\subset \overline{\bigcup_{\beta\in\Omega_{k+2}}f_{\alpha\hat{\;} \beta}(A_{k+3})}.$$
On the other hand,
$$
\bigcup_{\beta\in\Omega_{k+2}}f_{\alpha\hat{\;} \beta}(A_{k+3})=f_{\alpha_0}\left(\bigcup_{\beta_0\in\Omega_{k+1}}f_{(\alpha_1^{(0)},...,\alpha_{k+1}^{(0)},\beta_0)}(A_{k+2})\times \bigcup_{\beta_1\in\Omega_{k+1}}f_{(\alpha_1^{(1)},...,\alpha_{k+1}^{(1)},\beta_1)}(A_{k+2})\times...\right)$$
$$
\subset f_{\alpha_0}\left(\overline{\bigcup_{\beta_0\in\Omega_{k+1}}f_{(\alpha_1^{(0)},...,\alpha_{k+1}^{(0)},\beta_0)}(A_{k+2})}\times \overline{\bigcup_{\beta_1\in\Omega_{k+1}}f_{(\alpha_1^{(1)},...,\alpha_{k+1}^{(1)},\beta_1)}(A_{k+2})}\times...\right)
$$
$$
=f_{\alpha_0}\left(\overline{f_{(\alpha_1^{(0)},...,\alpha_{k+1}^{(0)})}(A_{k+1})}\times \overline{f_{(\alpha_1^{(1)},...,\alpha_{k+1}^{(1)})}(A_{k+1})}\times...\right)$$ $$\subset \overline{f_{\alpha_0}\left({f_{(\alpha_1^{(0)},...,\alpha_{k+1}^{(0)})}(A_{k+1})}\times {f_{(\alpha_1^{(1)},...,\alpha_{k+1}^{(1)})}(A_{k+1})}\times...\right)}=\overline{f_\alpha(A_{k+2})},
$$
whence 
$$
\overline{\bigcup_{\beta\in\Omega_{k+2}}f_{\alpha\hat{\;} \beta}(A_{k+3})} \subset \overline{f_\alpha(A_{k+2})}.
$$
In particular, we get (\ref{zz1}) for $k+1$, which ends the proof of (\ref{zz1}).\\
We are ready to prove (ii). Let $\alpha \in {_k}\Omega$, then by (\ref{zz1}), we have
$$
A_\alpha=\overline{f_\alpha(A_{k+1})}=\overline{\bigcup_{\beta\in\Omega_{k+1}}f_{\alpha\hat{\;} \beta}(A_{k+2})}=\overline{\bigcup_{\beta\in\Omega_{k+1}}\overline{f_{\alpha\hat{\;} \beta}(A_{k+2})}}=\overline{\bigcup_{\beta\in\Omega_{k+2}}A_{\alpha\hat{\;} \beta}}
$$
hence we get (iia). We show (iib) by induction. If $k=0$, then it clearly holds (even without closures). If it holds for some $k$, then by (iia),
$$
A_\F=\overline{\bigcup_{\alpha\in_k\Omega}A_\alpha}=\overline{\bigcup_{\alpha\in_k\Omega}\overline{\bigcup_{\beta\in\Omega_{k+1}}A_{\alpha\hat{\;} \beta}}}=\overline{\bigcup_{\alpha\in_k\Omega}\bigcup_{\beta\in\Omega_{k+1}}A_{\alpha\hat{\;} \beta}}=\overline{\bigcup_{\alpha\in_{k+1}\Omega}A_\alpha}.
$$
Thus (ii) holds.\\
The point (iiia) was already observed in previous lemma. Using it, we can follow the same lines as in~(ii), but without closures.
\end{proof}
\subsection{The canonical map between the code space and an appropriate \gifs}
The mentioned results concerning IFSs allow to define a natural map between the code space $\prod_{k=0}^\infty\{1,...,n\}$ and an IFS $\{g_1,...,g_n\}$ with the attractor $A$. Namely, for every $\alpha=(\alpha_0,\alpha_1,...)\in\prod_{k=0}^\infty\{1,...,n\}$, let $\pi(\alpha)$ be the unique element of the intersection $\bigcap_{k=0}^\infty g_{\alpha_0}\circ...\circ g_{\alpha_k}(A)$. Then the map $\pi:\prod_{k=0}^\infty\{1,...,n\}\to X$ has the following properties:
\begin{itemize}
\item[(i)] $\pi$ is continuous;
\item[(ii)] $\pi(\prod_{k=0}^\infty\{1,...,n\})=A$; 
\item[(iii)] for every $\alpha\in\prod_{k=0}^\infty\{1,...,n\}$ and every nonempty, closed and bounded $D \subset X$, the sequence\\ $g_{\alpha_0}\circ...\circ g_{\alpha_k}(D)\to \{\pi(\alpha)\}$ with respect to the Hausdorff-Pompeiu metric;
\item[(iv)] for every $\alpha\in\prod_{k=0}^\infty\{1,...,n\}$ and every $x\in X$, the sequence $g_{\alpha_0}\circ...\circ g_{\alpha_k}(x)\to \pi(\alpha)$.
\end{itemize}

In this part we give a counterpart of this result (again, we follow the ideas from \cite{SS2}). We will always assume that a \gifs \ $\F=\{f_1,...,f_n\}$ satisfies $(S_1)$ and $A_\F$ is its fractal. All symbols have the same meaning as earlier. 



\begin{proposition}\label{zbieznosc do punktu}
For every $\alpha\in\mathbf{\Omega}$, the sequence $(A_{\alpha|_k})_{k\in\N^*}$ is a decreasing sequence of compact sets and $\on{diam}_d(A_{\alpha|_k}) \to 0$. In particular, there exists $x_\alpha\in X$ such that $\bigcap_{k\in\N^*} A_{\alpha|_k} = \{x_\alpha\}$.
\end{proposition}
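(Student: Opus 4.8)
The plan is to read everything off from the division results of Theorem \ref{kafelkowanie} and the diameter estimate at the end of Lemma \ref{mniejsze srednice}, so that the only genuinely metric input is completeness of $X$. Fix $\alpha=(\alpha_0,\alpha_1,...)\in\mathbf{\Omega}$ and write $\alpha|_k=(\alpha_0,...,\alpha_k)\in {_k}\Omega$, so that $\alpha|_{k+1}=(\alpha|_k)\hat{\;}\alpha_{k+1}$ with $\alpha_{k+1}\in\Omega_{k+1}$.

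First I would show the sequence is decreasing: applying Theorem \ref{kafelkowanie}(iia) to $\alpha|_k\in {_k}\Omega$ gives
$$A_{\alpha|_k}=\overline{\bigcup_{\beta\in\Omega_{k+1}}A_{(\alpha|_k)\hat{\;}\beta}}\ \supseteq\ A_{(\alpha|_k)\hat{\;}\alpha_{k+1}}=A_{\alpha|_{k+1}},$$
since $\alpha_{k+1}$ is one of the indices $\beta$ occurring in the union. Each $A_{\alpha|_k}$ is nonempty: $A_{k+1}$ contains the constant sequences built from points of $A_\F\ne\emptyset$, hence $A_{k+1}\ne\emptyset$, and $f_{\alpha|_k}$ is well defined on $A_{k+1}$ by Proposition \ref{abc5}(i). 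For compactness, Theorem \ref{kafelkowanie}(iib) yields $A_\F=\overline{\bigcup_{\gamma\in {_k}\Omega}A_\gamma}\supseteq A_{\alpha|_k}$; since $A_{\alpha|_k}$ is closed (being a closure) and $A_\F$ is compact, $A_{\alpha|_k}$ is compact.

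Next I would control the diameters. As $\F$ satisfies $(S_1)$ we have $L_{(s,1)}(\F)<1$, so the final part of Lemma \ref{mniejsze srednice}, applied with $D:=A_\F$ (and using the identification $(A_\F)_k=A_k$), gives $\on{diam}_d\big(f_{\alpha|_k}(A_{k+1})\big)\to 0$. Passing to closures does not change the diameter, hence $\on{diam}_d(A_{\alpha|_k})=\on{diam}_d\big(\overline{f_{\alpha|_k}(A_{k+1})}\big)\to 0$.

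Finally, $(A_{\alpha|_k})_{k\in\N^*}$ is a decreasing sequence of nonempty compact subsets of $X$, so $\bigcap_{k\in\N^*}A_{\alpha|_k}\ne\emptyset$ by the finite intersection property; and if $x,y$ both lie in this intersection then $d(x,y)\le\on{diam}_d(A_{\alpha|_k})\to 0$, forcing $x=y$. Hence $\bigcap_{k\in\N^*}A_{\alpha|_k}=\{x_\alpha\}$ for a unique $x_\alpha\in X$. (One could also argue without compactness: any choice $x_k\in A_{\alpha|_k}$ is Cauchy by the diameter bound, and its limit, which exists by completeness of $X$, lies in every closed set $A_{\alpha|_k}$.) I do not expect a genuine obstacle here: every step reduces to a result already established, and the only point needing care is the bookkeeping ensuring that the restriction $\alpha|_k$, the concatenation notation of Theorem \ref{kafelkowanie}, and the identification $A_k=(A_\F)_k$ are used consistently.
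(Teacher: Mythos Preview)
Your proof is correct and follows essentially the same approach as the paper: the paper's own proof is a two-line argument citing Theorem~\ref{kafelkowanie}(iia) for the decreasing property and Theorem~\ref{kafelkowanie}(i) (with $q=1$) for the diameter estimate, while you spell out the same ingredients in more detail, citing Lemma~\ref{mniejsze srednice} directly rather than its repackaging in Theorem~\ref{kafelkowanie}(i). The only cosmetic difference is that you derive compactness via $A_{\alpha|_k}\subset A_\F$ from (iib), whereas the paper leaves this implicit.
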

\begin{proof}
By Theorem~\ref{kafelkowanie}(iia), the sequence $(A_{\alpha|_k})$ is decreasing sequence of compact sets. Hence it is enough to show that $\on{diam}_d(A_{\alpha|_k})\to 0$, but this follows from Theorem~\ref{kafelkowanie}(i) (as $L_{(s,1)}(\F)<1$).
\end{proof}

By the above result, we can define the mapping $\pi:\mathbf{\Omega} \to X$ by
$$\pi(\alpha) := x_\alpha,$$
where $x_\alpha$ is the unique point of $\bigcap_{k\in\N^*} A_{\alpha|_k}$.

The next result gives a counterpart of the mentioned theorem.
\begin{theorem}\label{abc17}
In the above frame (we underline that $\F$ satisfies $(S_1)$):
\begin{itemize}
\item[(i)] $\overline{\pi(\mathbf{\Omega})}=A_\F$.
\item[(ii)] If additionally $\F$ satisfies $(S_2)$, then $\pi(\mathbf{\Omega})=A_\F$.
\item[(iii)] $\pi:\mathbf{\Omega}\to X$ is continuous, provided on each $\Omega_k$ we consider the discrete topology (induced by $d_{k,s,1}$) and on $\mathbf{\Omega}$ the Tychonoff product topology.
\item[(iv)] If additionally $\F$ satisfies $(Q)$ (or, equivalently, $(P)$), then 
the map $\pi:\mathbf{\Omega}\to X$ is continuous provided we consider the metric $d_{(s,q)}$ (or, equivalently, $d_{(p,q)}$) on $\mathbf{\Omega}$.
\end{itemize}
\end{theorem}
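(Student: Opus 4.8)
The plan is to prove the four claims in order; the engine in every case is the tiling of $A_\F$ supplied by Theorem~\ref{kafelkowanie} together with the fact that the pieces $A_{\alpha|_k}$ shrink to points (Proposition~\ref{zbieznosc do punktu}, uniformly in Lemma~\ref{mniejsze srednice}). \emph{For (i) and (ii):} Since $\pi(\alpha)\in A_{\alpha|_0}\subset A_\F$ for every $\alpha\in\mathbf{\Omega}$ and $A_\F$ is closed, $\overline{\pi(\mathbf{\Omega})}\subset A_\F$. Conversely, given $x\in A_\F$ and $\vp>0$, use $L_{(s,1)}(\F)<1$ and Lemma~\ref{mniejsze srednice} to pick $k$ with $\on{diam}_d(A_\gamma)<\vp$ for all $\gamma\in{_k}\Omega$; by Theorem~\ref{kafelkowanie}(iib) there are $\gamma\in{_k}\Omega$ and $y\in A_\gamma$ with $d(x,y)<\vp$, and since each $\Omega_j$ is nonempty $\gamma$ extends to some $\alpha\in\mathbf{\Omega}$ with $\alpha|_k=\gamma$, whence $\pi(\alpha)\in A_{\alpha|_k}=A_\gamma$ and $d(x,\pi(\alpha))<2\vp$. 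This gives (i). Under the extra hypothesis $(S_2)$ the closures disappear (Theorem~\ref{kafelkowanie}(iiib),(iiic)), so for $x\in A_\F$ one chooses $\alpha_0$ with $x\in A_{(\alpha_0)}$ and, recursively, $\alpha_{k+1}\in\Omega_{k+1}$ with $x\in A_{(\alpha_0,\dots,\alpha_{k+1})}$; then $x\in\bigcap_k A_{\alpha|_k}=\{\pi(\alpha)\}$, i.e.\ $x=\pi(\alpha)$, proving (ii).

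\emph{For (iii):} Fix $\alpha\in\mathbf{\Omega}$ and $\vp>0$, and by Proposition~\ref{zbieznosc do punktu} choose $k$ with $\on{diam}_d(A_{\alpha|_k})<\vp$. Since each $\Omega_j$ carries the discrete topology, the cylinder $U=\{\beta\in\mathbf{\Omega}:\beta_j=\alpha_j,\ j=0,\dots,k\}$ is open in the Tychonoff product topology; for $\beta\in U$ we have $\beta|_k=\alpha|_k$, so $\pi(\alpha),\pi(\beta)\in A_{\alpha|_k}$ and $d(\pi(\alpha),\pi(\beta))<\vp$. Hence $\pi$ is continuous. (When $(Q)$ holds this also follows from (iv), the topology there being coarser; but (iii) is asserted only under $(S_1)$.)

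\emph{For (iv):} For $q<1$ the metrics $d_{(p,q)}$ and $d_{(s,q)}$ induce the same (Tychonoff product) topology on $\mathbf{\Omega}$ (Lemma~\ref{filiplemma2}(ii)), so it suffices to treat $d_{(s,q)}$ under $(Q)$. Fix $x_0\in A_\F$, set $D:=\{x_0\}$, and let $w_{k+1}$ be the unique element of the singleton $D_{k+1}\subset X^\infty_{k+1}$ (Lemma~\ref{filiplemma1}(iii)); as $D\subset A_\F$ we have $w_{k+1}\in A_{k+1}$. Put $g_k(\alpha):=f_{\alpha|_k}(w_{k+1})$. Both $g_k(\alpha)\in f_{\alpha|_k}(A_{k+1})\subset A_{\alpha|_k}$ and $\pi(\alpha)$ lie in $A_{\alpha|_k}$, so by the $d_{(s,q)}$-estimate of Theorem~\ref{kafelkowanie}(i), $\sup_{\alpha}d(g_k(\alpha),\pi(\alpha))\le L_{(s,q)}(\F)^{k+1}\on{diam}_d(A_\F)\to 0$ (here $L_{(s,q)}(\F)<1$); thus $g_k\to\pi$ uniformly, and it remains to show each $g_k$ is $d_{(s,q)}$-continuous.

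Since $g_k$ factors through the continuous projection $\mathbf{\Omega}\to{_k}\Omega$, it is enough that $\gamma\mapsto f_\gamma(w_{k+1})$ be continuous on ${_k}\Omega$ with the product topology of the $d_{j,s,q}$; I would get this from the stronger statement, proved by induction on $k$, that $\gamma\mapsto f_\gamma(x)$ is continuous on ${_k}\Omega$ for every fixed $x\in X^\infty_{k+1}$. The base $k=0$ is immediate (${_0}\Omega$ finite). For the step, write $f_\gamma(x)=f_{\gamma_0}((f_{\gamma(i)}(x_i))_{i})$ for $\gamma\in{_{k+1}}\Omega$, $x=(x_i)\in X^\infty_{k+2}$, with $\gamma(i)\in{_k}\Omega$; the set $\{x_i:i\in\N^*\}$ is $d_{k+1,s,q}$-bounded, so Proposition~\ref{abc5}(iv) makes $B:=\{f_\delta(x_i):\delta\in{_k}\Omega,\ i\in\N^*\}$ bounded in $X$. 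By continuity of $\gamma\mapsto\gamma(i)$ and the inductive hypothesis each $\gamma\mapsto f_{\gamma(i)}(x_i)$ is continuous; these values lie in $B$, and on $\ell_\infty(B)$ the $d_{s,q}$-topology equals the product topology (Proposition~\ref{fp1}(iv)), so $\gamma\mapsto(f_{\gamma(i)}(x_i))_i$ is continuous into $(\Xinf,d_{s,q})$; finally $\gamma\mapsto\gamma_0$ is locally constant (values in finite discrete $\Omega_0$) and each $f_i$ is $d_{s,q}$-continuous by $(Q)$, so $\gamma\mapsto f_\gamma(x)$ is continuous. The one delicate point throughout is precisely that $d_{k+1,s,q}$ is \emph{not} the product topology on the unbounded space $X^\infty_{k+1}$; it is the boundedness from Proposition~\ref{abc5}(iv) that confines matters to a set where Proposition~\ref{fp1}(iv) restores the product topology. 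Being a uniform limit of continuous maps, $\pi$ is then continuous, which proves (iv).
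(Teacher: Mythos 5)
Your proofs of (i)--(iii) are correct and essentially the paper's: (iii) is identical, (ii) is identical, and (i) differs only cosmetically (you pick one level $k$ with all $\on{diam}_d(A_\gamma)<\vp$ and approximate once, whereas the paper builds the sequence $\alpha$ coordinate by coordinate with a telescoping $\sum\vp/2^{k+1}$ estimate; both work). Part (iv), however, is a genuinely different and, as far as I can check, correct argument. The paper proves a separate technical lemma (Lemma~\ref{alfabeta}) bounding $d(f_\alpha(x),f_\beta(x))$ for $\alpha,\beta\in{_k}\Omega$ by $\on{diam}_d(A_\F)$ times a sum of $q^{i_0+\dots+i_{l-1}}$ over the index sets $C_l^{\alpha,\beta}$ where $\alpha$ and $\beta$ disagree, compares that sum with $d_{(p,q)}(\alpha,\beta)$, and extracts an explicit $\delta(\vp)$; this yields uniform continuity with a concrete modulus. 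You instead write $\pi$ as the uniform limit of the maps $g_k(\alpha)=f_{\alpha|_k}(w_{k+1})$ (uniformity coming from Theorem~\ref{kafelkowanie}(i), since both $g_k(\alpha)$ and $\pi(\alpha)$ lie in $A_{\alpha|_k}$), and prove each $g_k$ continuous by induction, which completely avoids the combinatorics of the sets $C_l^{\alpha,\beta}$. Your induction is sound: the key step --- that $\gamma\mapsto(f_{\gamma(i)}(x_i))_i$ lands in $\prod_i B$ for the bounded set $B$ supplied by Proposition~\ref{abc5}(iv), where Proposition~\ref{fp1}(iv) identifies the $d_{s,q}$-topology with the product topology so that coordinatewise continuity suffices --- is exactly the point that needs care when $X$ is unbounded, and you flag and handle it correctly. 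The trade-off is that the paper's route gives quantitative (uniformly continuous, with modulus) information, while yours is shorter and softer, giving continuity only (which on the compact space $(\mathbf{\Omega},d_{(s,q)})$ is of course automatically uniform, just without an explicit modulus).
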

\begin{remark}\emph{Denote by $\tau_1$ the topology on $\mathbf{\Omega}$ defined as in (iv), that is, the topology induced by any of metrics $d_{(p,q)}$ or (equivalently), $d_{s,q}$ ($q<1$). As was observed in Lemma \ref{filiplemma2}(ii), $(\mathbf{\Omega},\tau_1)$ is compact (it is a Tychonoff product of compact spaces $\Omega_k$).\\
Denote by $\tau_2$ the topology on $\mathbf{\Omega}$ defined as in (iii), that is, the Tychonoff product of discrete topologies on $\Omega_k$. Clearly, $\tau_1\subset \tau_2$ and $(\mathbf{\Omega},\tau_2)$ is not compact (in the case $n>1$).\\
Later we will give example that the thesis of (iv) does not hold under the assumption $(S_2)$.
Hence point~(iv) (whose proof will be the longest) says that $\pi$ has better properties if we additionally assume (Q) (or, equivalently, (P)), because it is continuous not only with respect to the topology $\tau_2$ but also with respect to the topology $\tau_1$}.
\end{remark}

Before we give the proof, we formulate an additional lemma. If $k\in\N$ and $\alpha=(\alpha_0,\alpha_1,...,\alpha_k),\beta=(\beta_0,\beta_1,...,\beta_k)\in{_k}\Omega$, then for $l=1,...,k$, define
$$C_l^{\alpha, \beta} := \left\{(i_0, ..., i_{l-1}): (\alpha_l)^{(i_0, ..., i_{l-1})} \neq (\beta_l)^{(i_0, ..., i_{l-1})}\right\}.$$ 
\begin{lemma}\label{alfabeta}
Let $k\in\N, x\in A_{k+1}$ and $\alpha,\beta\in{_k}\Omega$.
\begin{itemize}
\item[(i)] If $L_{(p,q)}(\F)\leq 1$, then
$$d\left(f_\alpha(x), f_{\beta}({x}) \right) \leq  \left\{ \begin{array}{ll} \on{diam}_d(A_{\mS}) \left( \sum_{i_0\in C_1^{\alpha, \beta}} q^{i_0} + ... + \sum_{(i_0, ..., i_{k-1}) \in C_k^{\alpha, \beta}} q^{i_0+...+i_{k-1}} \right )^{1/p}, \textrm{ if $\alpha_0 = \beta_0$},\\
\on{diam}_d(A_{\mS}), \textrm{ otherwise}. \end{array} \right. $$
\item[(ii)] If $L_{(s,q)}(\F)\leq 1$, then
$$d\left(f_\alpha(x), f_{\beta}({x}) \right) \leq  \left\{\begin{array}{ll}\on{diam}_d(A_{\mS})\max\{\sup\{q^{i_0}:i_0\in C_1^{\alpha,\beta}\},...,\sup\{q^{i_0+...+i_{k-1}}:(i_0,...,i_{k-1})\in C_k^{\alpha,\beta}\}\}, \textrm{ if $\alpha_0 = \beta_0$},\\
\on{diam}_d(A_{\mS}), \textrm{ otherwise}. \end{array} \right. $$
\end{itemize}
\end{lemma}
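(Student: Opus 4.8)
The plan is to prove both parts by induction on $k$, and within each inductive step to separate the two branches of the estimate according to whether $\alpha_0=\beta_0$ or not. I will describe part (i); part (ii) runs identically, with the $\ell^p$-type sums replaced throughout by suprema (reading $\sup\emptyset=0$) and the rearrangement of sums replaced by $\sup_i\max_l=\max_l\sup_i$.

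First, the branch $\alpha_0\ne\beta_0$ needs no induction. Since $x\in A_{k+1}$ we have $f_\alpha(x)\in f_\alpha(A_{k+1})\subseteq A_\alpha$ and $f_\beta(x)\in A_\beta$, and by Theorem~\ref{kafelkowanie}(iib) both $A_\alpha,A_\beta\subseteq A_\F$, so $d(f_\alpha(x),f_\beta(x))\le\on{diam}_d(A_\F)$. (This crude estimate is valid irrespective of $\alpha_0,\beta_0$; it is simply the one we keep in the ``otherwise'' case.)

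Now the branch $\alpha_0=\beta_0$, by induction on $k$. For $k=1$ (the case $k=0$, if admitted, is trivial since then $f_\alpha=f_\beta$): writing $\alpha=(\alpha_0,\alpha_1)$, $\beta=(\alpha_0,\beta_1)$, $x=(x_0,x_1,...)\in A_2$, the definition~(\ref{abcde1}) together with $L_{p,q}(f_{\alpha_0})\le L_{(p,q)}(\F)\le1$ gives
$$d(f_\alpha(x),f_\beta(x))\le\Bigl(\sum_{i\in\N^*}q^i\,d^p\bigl(f_{\alpha_1^{(i)}}(x_i),f_{\beta_1^{(i)}}(x_i)\bigr)\Bigr)^{1/p};$$
the $i$-th term vanishes when $\alpha_1^{(i)}=\beta_1^{(i)}$ (identical maps), and is $\le q^i\on{diam}_d(A_\F)^p$ when $\alpha_1^{(i)}\ne\beta_1^{(i)}$, i.e.\ $i\in C_1^{\alpha,\beta}$ (then $f_{\alpha_1^{(i)}}(x_i),f_{\beta_1^{(i)}}(x_i)\in A_\F$ by Theorem~\ref{kafelkowanie}(iib), as $x_i\in A_1$), which yields the bound for $k=1$. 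For the step, assume the estimate for some $k$ and take $\alpha,\beta\in{_{k+1}}\Omega$ with $\alpha_0=\beta_0$ and $x=(x_0,x_1,...)\in A_{k+2}$. Exactly as above,
$$d(f_\alpha(x),f_\beta(x))\le\Bigl(\sum_{i\in\N^*}q^i\,d^p\bigl(f_{\alpha(i)}(x_i),f_{\beta(i)}(x_i)\bigr)\Bigr)^{1/p},$$
with $\alpha(i),\beta(i)\in{_k}\Omega$ and $x_i\in A_{k+1}$. To each inner distance I apply the inductive hypothesis: if $i\in C_1^{\alpha,\beta}$ then $(\alpha(i))_0\ne(\beta(i))_0$ and we get only $\on{diam}_d(A_\F)$; if $i\notin C_1^{\alpha,\beta}$ then $(\alpha(i))_0=(\beta(i))_0$ and we get $\on{diam}_d(A_\F)\bigl(\sum_{l=1}^{k}\sum_{(j_0,...,j_{l-1})\in C_l^{\alpha(i),\beta(i)}}q^{j_0+...+j_{l-1}}\bigr)^{1/p}$.

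The crux -- and the only step beyond routine bookkeeping -- is the identity $C_l^{\alpha(i),\beta(i)}=\{(j_0,...,j_{l-1}):(i,j_0,...,j_{l-1})\in C_{l+1}^{\alpha,\beta}\}$ for $l=1,...,k$, which follows from~(\ref{abc26}) because passing from $\alpha$ to $\alpha(i)$ prepends $i$ to every superscript multi-index, so that $(\alpha(i))_l^{(j_0,...,j_{l-1})}=\alpha_{l+1}^{(i,j_0,...,j_{l-1})}$. Granting it, multiplying the inductive bound by $q^i$ and reindexing ($m=l+1$, $i_0=i$) shows that in \emph{both} cases $q^i\,d^p\bigl(f_{\alpha(i)}(x_i),f_{\beta(i)}(x_i)\bigr)\le\on{diam}_d(A_\F)^p\sum_{l=1}^{k+1}\sum_{(i_0,...,i_{l-1})\in C_l^{\alpha,\beta},\,i_0=i}q^{i_0+...+i_{l-1}}$: when $i\in C_1^{\alpha,\beta}$ the $l=1$ part already provides the required term $q^i\on{diam}_d(A_\F)^p$ and the higher-$l$ parts only enlarge the right side, while when $i\notin C_1^{\alpha,\beta}$ the $l=1$ part is empty and the remaining parts are precisely the reindexed inductive bound. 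Summing over $i\in\N^*$, regrouping each $\sum_{(i_0,...,i_{l-1})\in C_l^{\alpha,\beta}}$ by its first coordinate, and taking $p$-th roots gives the assertion for $k+1$. The main obstacle is therefore purely notational -- keeping the nested-superscript multi-indices aligned through the reindexing -- since, once the identity for $C_l^{\alpha(i),\beta(i)}$ is isolated, everything else is the rearrangement of a convergent series of nonnegative terms; part (ii) is obtained the same way using $d_{s,q}((u_i),(v_i))=\sup_i q^i d(u_i,v_i)$ and the exchange of $\sup_i$ with $\max_l$.
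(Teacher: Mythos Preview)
Your proof is correct and follows essentially the same route as the paper's: induction on $k$, the trivial diameter bound when $\alpha_0\ne\beta_0$, the Lipschitz estimate with $L_{(p,q)}(\F)\le1$ followed by splitting over $i\in C_1^{\alpha,\beta}$ versus $i\notin C_1^{\alpha,\beta}$, and the key reindexing identity $C_l^{\alpha(i),\beta(i)}=\{(j_0,\dots,j_{l-1}):(i,j_0,\dots,j_{l-1})\in C_{l+1}^{\alpha,\beta}\}$. The only cosmetic difference is that you bound each $i$-term by the ``slice'' $\sum_{l}\sum_{i_0=i}$ and then sum, whereas the paper keeps the two cases separate, enlarges $\sum_{i\notin C_1}$ to $\sum_{i\in\N^*}$, and then rearranges---the resulting inequalities are identical.
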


\begin{proof}
We will just prove the first assertion. The proof will be inductive.\\ First let $k=1$ and take any $x=(x_0,x_1,...)\in A_2$, and $\alpha=(\alpha_0,\alpha_1),\beta=(\beta_0,\beta_1) \in {_1}\Omega$.  
If $\alpha_0=\beta_0$ then
$$d\left(f_{\alpha}({x}), f_{\beta}({x}) \right) = d\left(f_{\alpha_0} \left(f_{\alpha_{1}^{(0)}}(x_0), f_{\alpha_{1}^{(1)}}(x_1), ... \right), f_{\beta_0} \left(f_{\beta_{1}^{(0)}}(x_0), f_{\beta_{1}^{(1)}}(x_1), ... \right) \right) \leq $$
$$
\leq d_{p,q} \left(f_{\alpha_{1}^{(0)}}(x_0), f_{\alpha_{1}^{(1)}}(x_1), ... \right), \left(f_{\beta_{1}^{(0)}}(x_0), f_{\beta_{1}^{(1)}}(x_1), ... \right) =
$$
$$= \left( \sum_{i=0}^\infty q^i d^p\left(f_{\alpha_{1}^{(i)}}(x_i), f_{\beta_{1}^{(i)}}(x_i) \right) \right)^{1/p} = $$
$$= \left( \sum_{i\notin C_1^{\alpha, \beta}} q^i d^p\left(f_{\alpha_{1}^{(i)}}(x_i), f_{\beta_{1}^{(i)}}(x_i) \right) + \sum_{i\in C_1^{\alpha, \beta}} q^i d^p\left(f_{\alpha_{1}^{(i)}}(x_i), f_{\beta_{1}^{(i)}}(x_i)\right) \right)^{1/p} = $$
$$=\left(\sum_{i\in C_1^{\alpha, \beta}} q^i d^p\left(f_{\alpha_{1}^{(i)}}(x_i), f_{\beta_{1}^{(i)}}(x_i) \right) \right)^{1/p} \leq \on{diam}_d(A_{\mS}) \left(\sum_{i\in C_1^{\alpha, \beta}} q^i \right)^{1/p},$$
where the last inequality comes from the fact that $f_j(A_1) \subset A_{\mS}$. Otherwise, if $\alpha_0 \neq \beta_0$, by Theorem~\ref{kafelkowanie} we simply have:
$$d\left(f_{\alpha}({x}), f_{\beta}({x}) \right) \leq \on{diam}_d(A_{\mS}).$$

Now assume that the thesis holds for $k \in \N$, and fix $x=(x_0,x_1,...)\in A_{k+2}$ and $\alpha=(\alpha_0,\alpha_1,...,\alpha_{k+1}), \beta=(\beta_0,\beta_1,...,\beta_{k+1})\in{_{k+1}}\Omega$. Additionally define $$C_*^{\alpha, \beta} := \{i: \alpha(i)_0 \neq \beta(i)_0\},$$
where $\alpha(i)_0,...,\alpha(i)_k$ are coefficients of $\alpha(i)$, i.e., $\alpha(i)=(\alpha(i)_0,...,\alpha(i)_k)$, and similarly for $\beta(i)$.\\
If $\alpha_0 = \beta_0$, then we have:
$$d\left(f_{\alpha}({x}), f_{\beta}({x}) \right) = $$
$$= d\left(f_{\alpha_0} \left(f_{\alpha(0)}(x_0), f_{\alpha(1)}(x_1), ... \right), f_{\beta_0} \left(f_{\beta(0)}(x_0), f_{\beta(1)}(x_1), ... \right) \right) \leq $$
$$\leq \left( \sum_{i\in\N^*} q^i d^p\left(f_{\alpha(i)}(x_i), f_{\beta(i)}(x_i) \right) \right)^{1/p} = $$
$$=  \left( \sum_{i\in C_*^{\alpha, \beta}} q^i d^p\left(f_{\alpha(i)}(x_i), f_{\beta(i)}(x_i) \right) +  \sum_{i\notin C_*^{\alpha, \beta}} q^i d^p\left(f_{\alpha(i)}(x_i), f_{\beta(i)}(x_i) \right) \right)^{1/p}$$
Fix $i\in\N^*$. Observe that for every $l=0,...,k$, we have $\alpha(i)_l = \alpha_{l+1}^{(i)}$ and 
$$(\alpha(i)_l)^{(i_0, ..., i_{l-1})} = (\alpha_{l+1}^{(i)})^{(i_0, ..., i_{l-1})} = \alpha_{l+1}^{(i, i_0, ..., i_{l-1})},$$
and similarly for $\beta$.\\ 
Hence $C_*^{\alpha, \beta} = C_1^{\alpha, \beta}$ and for every $l=1,...,k$, 
$$C^{\alpha(i), \beta(i)}_l = \left\{(i_0, ..., i_{l-1}): \alpha_{l+1}^{(i, i_0, ... i_{l-1})} \neq \beta_{l+1}^{(i, i_0, ... i_{l-1})}\right\}=\{(i_0,..., i_{l-1}):(i,i_0,...,i_{l-1})\in C^{\alpha,\beta}_{l+1}\}.$$ 
Observe also that $(\alpha_0,...,\alpha_{l+1})(i)=(\alpha(i)_0,...,\alpha(i)_l)$. 
Thus by the above observations and the inductive assumption, we can continue the above computations:
$$\leq  \left( \sum_{i\in C_1^{\alpha, \beta}} q^i \on{diam}_d^p(A_{\mS}) + \right.  \;\;\;\;\;\;\;\;\;\;\;\;\;\;\;\;\;\;\;\;\;\;\;\;\;\;\;\;\;\;\;\;\;\;\;\;\;\;\;\;\;\;\;\;\;\;\;\;\;\;\;\;\;\;\;\;\;\;\;\;\;\;\;\;\;\;\;\;\;\;\;\;$$
$$\;\;\;\;\;\;\;\;\;\;\;\;\;\;\;\;\;\;\;
\left. + \sum_{i\notin C_1^{\alpha, \beta}} q^i  \on{diam}_d^p(A_{\mS}) \left( \sum_{i_0\in C_1^{\alpha(i), \beta(i)}} q^{i_0} + ... + \sum_{(i_0, ..., i_{k-1}) \in C_k^{\alpha(i), \beta(i)}} q^{i_0+...+i_{k-1}} \right ) \right)^{1/p} \leq $$
$$\leq  \left( \sum_{i\in C_1^{\alpha, \beta}} q^i \on{diam}_d^p(A_{\mS}) +  \on{diam}_d^p(A_{\mS}) \sum_{i\in\N^*} q^i  \left( \sum_{i_0\in C_1^{\alpha(i), \beta(i)}} q^{i_0} + ... + \sum_{(i_0, ..., i_{k-1}) \in C_k^{\alpha(i), \beta(i)}} q^{i_0+...+i_{k-1}} \right ) \right)^{1/p} = $$
$$= \left( \sum_{i\in C_1^{\alpha, \beta}} q^i \on{diam}_d^p(A_{\mS}) +  \on{diam}_d^p(A_{\mS})\left(\sum_{(i,i_0) \in C_2^{\alpha, \beta}} q^{i+i_0} + ... + \sum_{(i, i_0, ..., i_{k-1}) \in C_{k+1}^{\alpha, \beta}} q^{i+i_0+...+i_{k-1}} \right) \right)^{1/p} = $$
$$= \on{diam}_d(A_{\mS}) \left( \sum_{i\in C_1^{\alpha, \beta}} q^i  +  \sum_{(i_0,i_1) \in C_2^{\alpha, \beta}} q^{i_0+i_1} + ... + \sum_{(i_0, i_1, ..., i_{k}) \in C_{k+1}^{\alpha, \beta}} q^{i_0+...+i_k} \right)^{1/p}. $$
Clearly, if $\alpha_0 \neq \beta_0$ we have 
$$d\left(f_\alpha(x), f_\beta(x) \right) \leq \on{diam}_d(A_{\mS}),$$
which ends the proof.
\end{proof}
We are ready to give the proof of Theorem \ref{abc17}.
\begin{proof}
Ad(i). Take any $\alpha=(\alpha_0,\alpha_1,...)\in\mathbf{\Omega}$. Then $\pi(\alpha)\in \bigcap_{k\in\N^*} A_{\alpha|_k} \subset A_{(\alpha_0)} \subset A_{\mS}$. We proved that $\pi(\mathbf{\Omega}) \subset A_{\mS}$ and therefore $\overline{\pi(\mathbf{\Omega})} \subset A_\mS$.\\
Now let $x\in A_{\mS}$ and $\varepsilon>0$. We will construct a sequence $\alpha\in \mathbf{\Omega}$ such that $d(x, \pi(\alpha))<\varepsilon$. We will proceed inductively. 
Since $A_{\mS} = \bigcup_{i=1}^n A_{(i)}$, there exists some $\alpha_0 \in \{1,...,n\} = \Omega_0$ such that $x\in A_{(\alpha_0)}$. Let $y_0:=x$. By Theorem~\ref{kafelkowanie}(iia): $A_{(\alpha_0)} = \overline{\bigcup_{\alpha_1 \in \Omega_1} A_{(\alpha_0, \alpha_1)}}$, hence there exist $\alpha_1 \in \Omega_1$ and $y_1 \in A_{(\alpha_0, \alpha_1)}$ such that $d(x, y_1) = d(y_0, y_1) < \frac{\varepsilon}{4}$. Assume that for some $k\in\N$ we defined $\alpha_k\in\Omega_k$ and $y_k \in A_{(\alpha_0, ..., \alpha_k)}$ such that $d(y_{k-1}, y_k) < \frac{\varepsilon}{2^{k+1}}$. By Theorem~\ref{kafelkowanie}(iia): $A_{(\alpha_0, ..., \alpha_k)} = \overline{\bigcup_{\alpha_{k+1} \in \Omega_{k+1}} A_{(\alpha_0, ..., \alpha_k, \alpha_{k+1})}}$, hence there exists $\alpha_{k+1} \in \Omega_{k+1}$ and $y_{k+1} \in A_{(\alpha_0, ..., \alpha_k, \alpha_{k+1})}$ such that $d(y_k, y_{k+1}) < \frac{\varepsilon}{2^{k+2}}$. We defined sequences $(\alpha_k)$ and $(y_k)$. Set $\alpha := (\alpha_0, \alpha_1, ...) \in \mathbf{\Omega}$. As $y_k\in A_{\alpha|_{k+1}}$ for all $k$, $\bigcap_{k\in\N^*}A_{\alpha|_k}=\{\pi(\alpha)\}$ and $\on{diam}_d(A_{\alpha|_k})\to0$, we have $y_k\to \pi(\alpha)$. In particular, there exists $k_0\in\N$ such that $d(y_{k_0},\pi(\alpha))<\frac{\varepsilon}{2}$. Thus
$$
d(x,\pi(\alpha))\leq d(x,y_1)+d(y_1,y_2)+...+d(y_{k_0-1},y_{k_0})+d(y_{k_0},\pi(\alpha))<\frac{\varepsilon}{2}+\frac{\varepsilon}{2}=\varepsilon
$$

Since $\varepsilon$ was taken arbitrarily we get $x \in \overline{\pi(\mathbf{\Omega})}$. 
\newline
Ad(ii). Observe that by Theorem~\ref{kafelkowanie}(iiib) in each step of the induction in the proof of (i) we can choose $\alpha_{k+1} \in \Omega_{k+1}$ such that $x \in A_{(\alpha_0, ..., \alpha_k, \alpha_{k+1})}$. Then $\bigcap_{k\in\N^*} A_{\alpha|_k} = \{x\}$ and therefore $\pi(\mathbf{\Omega}) = A_\mS$. 
\newline
Ad(iii). Let $\alpha=(\alpha_0,\alpha_1,...)\in\mathbf{\Omega}$ and $U\ni \pi(\alpha)$ be any open set. Since $\bigcap_{k\in\N^*}A_{\alpha|_k}=\{\pi(\alpha)\}$, there is $k_0$ such that $A_{\alpha|_{k_0+1}}\subset U$. Now let $$V=\{\alpha_0\}\times...\times\{\alpha_{k_0}\}\times\Omega_{k_0+1}\times\Omega_{k_0+2}\times... .$$
Then $V$ is an open set containing $\alpha$, and for every $\beta\in V$, we have $\pi(\beta)\in A_{\beta|_{k_0+1}}=A_{\alpha|_{k_0+1}}\subset U$. This gives the desired continuity.
\newline
Ad(iv). Assume that (P) holds.
Let $C_l^{\alpha, \beta}$ be defined as earlier. 
Observe that, if $\alpha, \beta \in \mathbf{\Omega}$ are such that $\alpha_0=\beta_0$, then by Lemma \ref{filiplemma1} we have that for any $k \in \N$,
$$d_{(p,q)}(\alpha, \beta) = \left( \sum_{i\in\N^*} \left(\frac{1-q}{2}\right)^i{d_{i,p,q}^p(\alpha_i, \beta_i)} \right)^{1/p} \geq$$ $$\geq \left(\left(\frac{1-q}{2}\right)^1{\sum_{i_0\in C_1^{\alpha, \beta}} q^{i_0}} + ... + \left(\frac{1-q}{2}\right)^k{\sum_{(i_0, ..., i_{k-1})\in C_k^{\alpha, \beta}} q^{i_0+...+i_{k-1}}} \right)^{1/p}.$$
Let $\varepsilon>0$. 

By Theorem~\ref{kafelkowanie}(i) there exists $k\in \N$ such that for any $\gamma \in {_k}\Omega$, we have that $\on{diam}_d(A_\gamma)<\frac{\varepsilon}{3}$. 
Take $\delta := \min\left\{1, \frac{\varepsilon}{3}\cdot\frac{\left(\frac{1-q}{2}\right)^{\frac{k}{p}}}{\on{diam}_d(A_{\mS})}\right\}$ (here we must assume $\on{diam}_d(A_\F)>0$; if $\on{diam}_d(A_\F)=0$, then (iv) clearly holds). Let $\alpha, \beta \in \mathbf{\Omega}$ be such that $d_{(p,q)}(\alpha, \beta)<\delta$. Since $\delta<1$, we have $\alpha_0=\beta_0$. Now let $x \in A_{k+1}$. By Lemma~\ref{alfabeta}:
$$d\left(f_{\alpha|_{k}}({x}), f_{\beta|_{k}}({x})\right) \leq \on{diam}_d(A_{\mS}) \left( \sum_{i_0\in C_1^{\alpha, \beta}} q^{i_0} + ... + \sum_{(i_0, ..., i_{k-1}) \in C_k^{\alpha, \beta}} q^{i_0 + ... + i_{k-1}} \right )^{1/p} \leq $$
$$\leq \on{diam}_d(A_{\mS})\left(\frac{1-q}{2}\right)^{-\frac{k}{p}} \left(\left(\frac{1-q}{2}\right)^1{\sum_{i_0\in C_1^{\alpha, \beta}} q^{i_0}} + ... + \left(\frac{1-q}{2}\right)^k{\sum_{(i_0, ..., i_{k-1})\in C_k^{\alpha, \beta}} q^{i_0+...+i_{k-1}}} \right)^{1/p} \leq $$
$$\leq \on{diam}_d(A_{\mS})\left(\frac{1-q}{2}\right)^{-\frac{k}{p}}d_{(p,q)}(\alpha, \beta) \leq \frac{\varepsilon}{3}.$$
On the other hand, $\pi(\alpha),f_{\alpha|_{k}}({x})\in A_{\alpha|_{k}}$ and $\pi(\beta),f_{\beta|_{k}}({x})\in A_{\beta|_{k}}$, so we have
$$d(\pi(\alpha), \pi(\beta)) \leq d\left(\pi(\alpha), f_{\alpha|_{k}}({x})\right) + d\left(f_{\alpha|_{k}}({x}), f_{\beta|_{k}}({x})\right) + d\left(f_{\beta|_{k}}({x}), \pi(\beta) \right)\leq $$ $$ \leq \on{diam}_d(A_{\alpha|_{k}})+\frac{\varepsilon}{3}+\on{diam}_d(A_{\beta|_{k}}) <\varepsilon.$$
This gives the continuity of $\pi$.
\end{proof} 


Now the promised examples:
\begin{example}\emph{
Let $\F=\{f_1,f_2\}$ be a \gifs \ from Example \ref{exampleee1}(1). It satisfies $(S_2)$. On the code space~$\mathbf{\Omega}$ (where $\Omega=\{1,2\}$) consider the topology induced by any of metrics $d_{(p,q)}$ or $d_{(s,q)}$, $q<1$.\\
For $k\in\N^*$, let $\alpha_k\in\Omega_k$ be such that all coefficients $\alpha_k^{(i_0,...,i_{k-1})}=1$. Then for every $n\in\N$, let
$$
\alpha_{(n)}:=(1,1,...,1,2,1,...),
$$
where $2$ stands on the $n$'th place. Clearly, the sequence $\left((\alpha_0,\alpha_{(n)},\alpha_2,\alpha_3,...)\right)_{n\in\N}$ converges (with respect to $d_{(p,q)}$) to the sequence $(\alpha_0,\alpha_1,\alpha_2,...)$.\\
For $0\leq b<a\leq 1$, set $[b,a]_X:=[b,a]\cap X$, where $X=\left\{\frac{1}{k}:k\in\N\right\}\cup\{0\}$ is the underlying space here.
Now observe that for every nonempty $[b,a]_X$, 
$$f_1([b,a]_X\times[b,a]_X\times....)\subset\left[\frac{1}{2}b,\frac{1}{2}a\right]_X \textrm{\; \; \; and \;\;\;} f_2([b,a]_X\times[b,a]_X\times....) = \left\{1\right\}.$$
From this we can easily see that $A_{(\alpha_0,...,\alpha_k)} \subset \left[0,\frac{1}{2^{k+1}}\right]_X$ and $\pi(\alpha_0,\alpha_1,...)=0$. On the other hand, for every $n\in\N^*$,
$$
\pi(\alpha_0,\alpha_{(n)},\alpha_2,...)\in A_{\left(\alpha_0,\alpha_{(n)}\right)} \subset$$ $$
\subset f_1\left(f_1\left(X\times X\times...\right)\times...\times f_1\left(X\times X\times...\right)\times f_2\left(X\times X\times...\right)\times f_1\left(X\times X\times...\right)\times...\right) \subset$$ $$\subset {
f_1\left(X\times X\times ...\times X\times \left\{1\right\} \times X\times...\right)}= \left\{\frac{1}{2}\right\}.
$$
In particular, $\pi(\alpha_0,\alpha_{(n)},\alpha_2,...) = \frac{1}{2}$, hence $\pi(\alpha_0,\alpha_{(n)},\alpha_2,...)\not\to \pi(\alpha_0,\alpha_1,...)$ so $\pi$ is not continuous with respect to considered topology.\\
In fact, it can be observed that if $(\alpha_0,\alpha_1,...\alpha_k)\in{_k}\Omega$ is such that $\alpha_0=1$ and for $1\leq j<k$, all $\alpha_j^{(i_0,...,i_{j-1})}=1$, but $\alpha_k^{(i_0,...,i_{k-1})}=2$ for some $i_0,...,i_{k-1}$, then $A_{(\alpha_0,...,\alpha_k)}=\left\{\frac{1}{2^k}\right\}$. Moreover, $A_{(2)}=\{1\}$.
}\end{example}
The next example bases on the one given by Secelean in \cite[Example 3.2]{Se}
\begin{example}\emph{
Consider $X := \R$ endowed with the Euclidean metric. Then ($\ell_\infty(\R), d_{s,1}$) is a Banach space which is known to be non-reflexive. Hence by the James theorem there exists a continuous linear functional $f: \ell_\infty(\R) \to \R$ such that $\left\|f \right\| = 1$ and its norm is not attained. In particular, $f(\prod_{k\in\N^*}[-1,1]) = (-1,1)$. Define $g_1, g_2$ as:
$$g_1 := \frac{1}{2} f - \frac{1}{2} \ \ \ \ \textrm{    and    } \ \ \ \ g_2 := \frac{1}{2} f + \frac{1}{2}.$$
Clearly $\left\|g_1 \right\| = \left\|g_2 \right\| = \frac{1}{2}$ and (C1) is fulfilled (since an image of any set in $\ell_\infty(\K(\R))$ by $g_i$ is bounded and therefore its closure is compact in $\R$). Hence \gifs\ $\mathcal{G} := \{g_1, g_2\}$ fulfills $(S_1)$ and by Theorem~\ref{se2} it has a unique attractor $A_\mathcal{G}$. Since $g_1(\prod_{k\in\N^*}[-1,1]) = (-1, 0)$ and $g_2(\prod_{k\in\N^*}[-1,1]) = (0, 1)$ we get that $A_\mathcal{G} = [-1,1]$. On the other hand we will show that $\pi(\mathbf{\Omega}) \subset (-1, 0) \cup (0,1)$.\\ 
Take any $\beta=(\beta_0,\beta_1,...)\in\Omega_1$. By the above observations, for every $i\in\N^*$, either $g_{\beta_i}(\prod_{k\in\N^*}[-1,1])=(-1,0)$ or $g_{\beta_i}(\prod_{k\in\N^*}[-1,1])=(0,1)$. Hence there exist $\varepsilon_0,\varepsilon_1,...\in\{-1,1\}$ such that
$$
f\left(g_{\beta_0}\left(\prod_{k\in\N^*}[-1,1]\right)\times g_{\beta_1}\left(\prod_{k\in\N^*}[-1,1]\right)\times...\right)=f\left(\left(-\frac{1}{2},\frac{1}{2}\right)\times\left(-\frac{1}{2},\frac{1}{2}\right)\times...\right)+f\left(\frac{\varepsilon_0}{2},\frac{\varepsilon_1}{2},...\right)=
$$
$$
=\frac{1}{2}f\big( \left(-1,1\right)\times\left(-1,1\right)\times... \big)+\frac{1}{2}f\left({\varepsilon_0},{\varepsilon_1},...\right)\subset\left[\frac{1}{2}(c-1),\frac{1}{2}(c+1)\right]
$$
where $c:=f\left({\varepsilon_0},{\varepsilon_1},...\right)\in(-1,1)$. Hence
$$
g_1\left(g_{\beta_0}\left(\prod_{k\in\N^*}[-1,1]\right)\times g_{\beta_1}\left(\prod_{k\in\N^*}[-1,1]\right)\times...\right)\subset \frac{1}{2}\left[\frac{1}{2}(c-1),\frac{1}{2}(c+1)\right]-\frac{1}{2}=\left[\frac{1}{4}c-\frac{3}{4},\frac{1}{4}c-\frac{1}{4}\right]
$$
In particular, $A_{(1,\beta)}\subset \left[\frac{1}{4}c-\frac{3}{4},\frac{1}{4}c-\frac{1}{4}\right]\subset (-1,0)$. 
 Similarly we show that $A_{(2,\beta)}\subset (0,1)$. Since $\beta$ was taken arbitrarily, we have that $\pi(\Omega)\subset(-1,0)\cup(0,1)$. 
}\end{example}

Finally, we show that appropriate sequence of sets converges to $\pi(\alpha)$.

\begin{proposition}\label{zbieznosc zbiorow}
For every $\alpha\in\mathbf{\Omega}$ and every bounded set $D\subset X$, the sequence $(f_{\alpha|_{k}}(D_{k+1}))$ (where $(D_k)$ is defined as in (\ref{abc0})) converges to $\{\pi(\alpha)\}$ with respect to the Hausdorff-Pompeiu metric.
\end{proposition}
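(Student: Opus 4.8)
The plan is to deduce the convergence from two diameter estimates that are already in hand: on the one hand $\pi(\alpha)\in A_{\alpha|_k}$ together with $\on{diam}_d(A_{\alpha|_k})\to 0$ (Proposition~\ref{zbieznosc do punktu}), and on the other hand $\on{diam}_d(f_{\alpha|_k}(E_{k+1}))\to 0$ for a suitable bounded set $E$ (the ``in particular'' part of Lemma~\ref{mniejsze srednice}, which is exactly where the standing assumption $(S_1)$, i.e. $L_{(s,1)}(\F)<1$, is used). Throughout I assume $D\neq\emptyset$, since otherwise the statement is vacuous.

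First I would introduce the auxiliary set $E:=D\cup A_\F$, a nonempty bounded subset of $X$, and let $(E_k)$ be defined according to (\ref{abc0}) for $E$. Since $D\subset E$ and $A_\F\subset E$, a one-line induction gives $D_{k+1}\subset E_{k+1}$ and $A_{k+1}\subset E_{k+1}$ for every $k\in\N^*$, and by Lemma~\ref{filiplemma1}(iii) all of these sit inside $X^\infty_{k+1}$, the common domain of the maps $f_{\alpha|_k}$. Applying Lemma~\ref{mniejsze srednice} to the bounded set $E$ yields $\on{diam}_d(f_{\alpha|_k}(E_{k+1}))\to 0$, and since $f_{\alpha|_k}$ is monotone with respect to inclusion, also $\on{diam}_d(f_{\alpha|_k}(D_{k+1}))\to 0$.

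The only point that needs a moment's thought is that the set $f_{\alpha|_k}(D_{k+1})$ is built from the data $D$ while the point $\pi(\alpha)$ is built from $A_\F$, so there is no direct inequality between them; I would bridge the two through a point of $f_{\alpha|_k}(A_{k+1})$. Concretely, fix $k\in\N^*$ and pick any $a_k\in f_{\alpha|_k}(A_{k+1})$ (nonempty, as $A_{k+1}\neq\emptyset$). Then $a_k\in\overline{f_{\alpha|_k}(A_{k+1})}=A_{\alpha|_k}$ and also $\pi(\alpha)\in A_{\alpha|_k}$, so $d(a_k,\pi(\alpha))\leq\on{diam}_d(A_{\alpha|_k})$; and for any $x\in f_{\alpha|_k}(D_{k+1})$ both $x$ and $a_k$ lie in $f_{\alpha|_k}(E_{k+1})$, whence $d(x,a_k)\leq\on{diam}_d(f_{\alpha|_k}(E_{k+1}))$. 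The triangle inequality then gives
$$
d(x,\pi(\alpha))\leq\on{diam}_d(f_{\alpha|_k}(E_{k+1}))+\on{diam}_d(A_{\alpha|_k})=:\delta_k
$$
for all $x\in f_{\alpha|_k}(D_{k+1})$.

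Finally, since $\{\pi(\alpha)\}$ is a singleton, the definition of the Hausdorff--Pompeiu metric reduces to $H^d(f_{\alpha|_k}(D_{k+1}),\{\pi(\alpha)\})=\sup\{d(x,\pi(\alpha)):x\in f_{\alpha|_k}(D_{k+1})\}\leq\delta_k$, and $\delta_k\to 0$ by the two estimates collected above, which is precisely the asserted convergence. (If one insists on honest compact sets one may replace $D$ by $\overline D$, or take closures everywhere; none of the distances used above change.) I do not expect any genuine obstacle here: the substantive work is already done in Lemma~\ref{mniejsze srednice} and Proposition~\ref{zbieznosc do punktu}, and the proof is essentially the choice of the auxiliary set $E$ plus the triangle inequality.
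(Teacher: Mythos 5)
Your proof is correct and follows essentially the same route as the paper: both introduce the auxiliary bounded set $D\cup A_\F$, apply Lemma~\ref{mniejsze srednice} to it, and conclude from the vanishing diameters; the only cosmetic difference is that the paper notes $\pi(\alpha)$ itself lies in $\overline{f_{\alpha|_k}((D\cup A_\F)_{k+1})}$ and bounds everything by that single diameter, whereas you route the estimate through a point of $f_{\alpha|_k}(A_{k+1})$ via the triangle inequality.
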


\begin{proof}
Define 
$$D':=D \cup A_{\mS}$$
and let $(D_k')$ be defined by (\ref{abc0}). Clearly, 
for any $k\in\N^*$, $A_k\subset D_k'$, so
$$\pi(\alpha)\in A_{\alpha|_k} = \overline{f_{\alpha|_k}(A_{k+1})} \subset \overline{f_{\alpha|_k}(D'_{k+1})}.$$
On the other hand, by Lemma~\ref{mniejsze srednice}, $\on{diam}_d(\overline{f_{\alpha|_k}(D'_{k+1})}) \to 0$ (as $L_{(s_1)}(\F) <1$), so $\bigcap_{k\in\N^*} \overline{f_{\alpha|_{k}} (D_{k+1}')} = \{\pi(\alpha)\}$.
In particular, for every $r>0$ there is $k_0\in\N^*$ such that for $k\geq k_0$, $\on{diam}_d(\overline{f_{\alpha|_k}(D'_{k+1})}) < r$. Hence (by the fact that $D_{k+1}\subset D'_{k+1}$), we have $\overline{f_{\alpha|_k}(D_{k+1})} \subset \overline{ f_{\alpha|_k} (D'_{k+1})} \subset B(\pi(\alpha), r)$, where $B(\pi(\alpha), r)$ states for the closed ball.
\end{proof}

If ${x}=(x_0, x_1, ...) \in \ell_\infty(X)$, then we define the sequence $(x_k)$ in the following inductive way:
\begin{equation}\label{abc18}
\mathbf{x}_1:={x},
\end{equation}
\begin{equation}\label{abc19}
\mathbf{x}_{k+1}:=(\mathbf{x}_{k},\mathbf{x}_{k},\mathbf{x}_{k},...) \textrm{ for }k\geq 1.
\end{equation}
Clearly, $\mathbf{x}_k\in X^\infty_k$ for every $k\in\N$.
\begin{corollary}\label{t12}
For every ${x}\in \Xinf$ and $\alpha\in\mathbf{\Omega}$, we have that $\lim_{k\rightarrow\infty}f_{\alpha|_{k}}(\mathbf{x}_{k+1})=\pi(\alpha)$.
\end{corollary}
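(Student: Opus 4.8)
The plan is to obtain the statement as a direct specialization of Proposition~\ref{zbieznosc zbiorow}: I will realize each diagonal point $f_{\alpha|_{k}}(\mathbf{x}_{k+1})$ as an element of the shrinking set $f_{\alpha|_{k}}(D_{k+1})$ for a conveniently chosen bounded set $D\subset X$, and then read off convergence from that proposition.

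First, since $x=(x_0,x_1,\dots)\in\Xinf$, the set $D:=\{x_i:i\in\N^*\}$ is nonempty and bounded, so $(D_k)$ is well defined by (\ref{abc0}) and, by Lemma~\ref{filiplemma1}(iii), $D_k\subset X^\infty_k$ for every $k$; in particular $f_{\alpha|_{k}}$ may legitimately be applied to elements of $D_{k+1}$. Next I would verify, by an immediate induction based on (\ref{abc18})--(\ref{abc19}), that $\mathbf{x}_k\in D_k$ for every $k\geq 1$: for $k=1$ this just says that every coordinate $x_i$ of $x$ belongs to $D$, i.e. $x\in\prod_{i}D_0=D_1$; and if $\mathbf{x}_k\in D_k$, then $\mathbf{x}_{k+1}=(\mathbf{x}_k,\mathbf{x}_k,\dots)\in\prod_{i}D_k=D_{k+1}$. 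Consequently $f_{\alpha|_{k}}(\mathbf{x}_{k+1})\in f_{\alpha|_{k}}(D_{k+1})$ for every $k$.

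Finally, Proposition~\ref{zbieznosc zbiorow} (applied to the bounded set $D$) gives that $f_{\alpha|_{k}}(D_{k+1})\to\{\pi(\alpha)\}$ with respect to the Hausdorff--Pompeiu metric; more precisely, its proof shows that for each $r>0$ there is $k_0$ with $\overline{f_{\alpha|_{k}}(D_{k+1})}\subset B(\pi(\alpha),r)$ for all $k\geq k_0$. Since $f_{\alpha|_{k}}(\mathbf{x}_{k+1})$ lies in this set, $d(f_{\alpha|_{k}}(\mathbf{x}_{k+1}),\pi(\alpha))\leq r$ for $k\geq k_0$, and letting $r\to 0$ yields $f_{\alpha|_{k}}(\mathbf{x}_{k+1})\to\pi(\alpha)$. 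There is essentially no serious obstacle here; the only point requiring (a trivial) care is the identification $\mathbf{x}_k\in D_k$, which rests solely on the inductive definitions and the boundedness of $x$.
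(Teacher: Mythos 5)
Your argument is correct and is essentially identical to the paper's own proof: both take $D:=\{x_0,x_1,\dots\}$, observe $\mathbf{x}_k\in D_k$, and conclude via Proposition~\ref{zbieznosc zbiorow}. The only difference is that you spell out the easy induction and the final $\varepsilon$-argument, which the paper leaves implicit.
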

\begin{proof}
It simply follows from Proposition~\ref{zbieznosc zbiorow}. Take $D:=\{x_0, x_1, ...\}$. Then $D$ is bounded subset of $X$, and for each $k \in \N$, $\mathbf{x}_k\in D_{k}$, where $D_{k}$ is as in Proposition~\ref{zbieznosc zbiorow}. In particular, $f_{\alpha|_{k}}(\mathbf{x}_{k+1})\in f_{\alpha|_{k}}(D_{k+1})$ and the result follows.
\end{proof}

\subsection{The relationships between a \gifs$\;\F$ and the canonical \gifs$\;\F_\mathbf{\Omega}$}
If $\{g_1,...,g_n\}$ is an IFS consisting of Banach contractions, then for every $k\in\N^*$ and every $\alpha_0,...,\alpha_k\in\{1,...,n\}$, we have
\begin{equation*}
g_{\alpha_0}\circ...\circ g_{\alpha_k}\circ\pi=\pi\circ\sigma_{\alpha_0}\circ...\circ\sigma_{\alpha_k},
\end{equation*}
where $\pi:\prod_{i=0}^\infty\{1,...,n\}\to X$ is the canonical map and $\sigma_1,...,\sigma_n$ are shifts (recalled already in Section~\ref{subsection52}).\\
In this section we show that the same relations between a \gifs \ and the canonical \gifs \ on the code space (see \cite[Theorem 3.11]{SS2} for the GIFS's case).\\
Set $\Pi:=\mathbf{\Omega}$ (for $\Omega=\{1,...,n\}$), and let $\Pi_k$, $k\in\N^*$ be defined by (\ref{abc0}).\\
For every $k\in\N^*$, we will define a family of mappings $\mathcal{T}_k=\{\tau_\alpha: \Pi_{k+1} \to \mathbf{\Omega}: \alpha\in {_k\Omega}\}$ by induction with respect to $k$. For $k=0$ we have already defined this family - this is just $\mathcal{T}_0=\{\tau_1, ..., \tau_n\}$ (the canonical \gifs\ on $\mathbf{\Omega}$ from Section 5.2). For $\alpha = (\alpha_0, \alpha_1, ..., \alpha_k, \alpha_{k+1})\in {_{k+1}}\Omega$, we set $\tau_\alpha: \Pi_{k+2} \to \mathbf{\Omega}$ by 
$$\tau_\alpha(\beta_0, \beta_1, ... ) := \tau_{\alpha_0}\left(\tau_{\alpha(0)}(\beta_0), \tau_{\alpha(1)}(\beta_1), ... \right).$$
Observe that, in fact, the families $\mathcal{T}_k$ are defined as $\F_k$, for the \gifs$\;\{\tau_1,...,\tau_n\}$ (see Section 6.1).\\
Now let $\F=\{f_1,...,f_n\}$ be a \gifs\ which satisfies $(S_1)$, $A_\F$ be its attractor and $(A_k)$ be the sequence defined as earlier (for $A:=A_\F$). Let $\{\pi_{k}:\Pi_{k+1} \rightarrow X_{k+1}:k\in\N^*\}$ be the family of mappings defined by the following inductive formula:
$$\pi_0(\alpha_0, \alpha_1, ...):=(\pi(\alpha_0), \pi(\alpha_1), ...),$$
$$\pi_{k+1}(\alpha_0, \alpha_1, ...):=(\pi_{k}(\alpha_{0}), \pi_{k}(\alpha_{1}), ...) \textrm{ for $k\geq 0$}.$$
\begin{theorem}\label{diag2}
In the above frame, for every $k\in \N^*$ and $\alpha \in {_k}\Omega$, $f_{\alpha}\circ \pi_{k} = \pi \circ \tau_{\alpha}$.
 \end{theorem}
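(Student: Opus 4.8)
The plan is to prove the identity by induction on $k$, with essentially all the difficulty concentrated in the base case $k=0$; the inductive step will then be a routine manipulation of the recursive definitions of $f_\alpha$, $\tau_\alpha$ and $\pi_k$. Before starting I would record two remarks. First, a direct computation from the defining formula for $\tau_j$ shows that for $(\alpha_0,\alpha_1,\dots)\in\ell_\infty(\mathbf{\Omega})$, if $\beta:=\tau_j(\alpha_0,\alpha_1,\dots)$ then $\beta_0=j$ and $\beta(i)=\alpha_i$ for every $i\in\N^*$ (this is the identity already used in the proof of Theorem~\ref{filipproposition1}(i)). Second, $\pi_k$ in fact takes its values in $(A_\F)_{k+1}\subset X^\infty_{k+1}$ — by an easy induction using Lemma~\ref{filiplemma3}(i) — so each composition $f_\alpha\circ\pi_k$ appearing in the statement makes sense.

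The crux is the self-similarity relation
\[
\pi(\beta)=f_{\beta_0}\bigl(\pi(\beta(0)),\pi(\beta(1)),\dots\bigr),\qquad\beta\in\mathbf{\Omega}.
\]
Once this is available, the case $k=0$ is immediate: by the first remark, $\pi\circ\tau_j(\alpha_0,\alpha_1,\dots)=f_j(\pi(\alpha_0),\pi(\alpha_1),\dots)=f_j\circ\pi_0(\alpha_0,\alpha_1,\dots)$.

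To prove the relation, fix $\beta\in\mathbf{\Omega}$ and let $(A_k)$ be built from $A_\F$ via \eqref{abc0}, so that $A_{\beta|_k}=\overline{f_{\beta|_k}(A_{k+1})}$ and $A_{k+1}=\prod_{i\in\N^*}A_k\subset X^\infty_{k+1}$. Unwinding \eqref{abcde1} and using $(\beta|_k)(i)=\beta(i)|_{k-1}\in{_{k-1}}\Omega$, one obtains the purely set-theoretic identity
\[
f_{\beta|_k}(A_{k+1})=f_{\beta_0}\Bigl(\prod_{i\in\N^*}f_{\beta(i)|_{k-1}}(A_k)\Bigr)\qquad(k\ge 1).
\]
On one hand, Proposition~\ref{zbieznosc zbiorow} (with $D=A_\F$, hence $D_{k+1}=A_{k+1}$) gives $f_{\beta|_k}(A_{k+1})\to\{\pi(\beta)\}$ in the Hausdorff--Pompeiu metric. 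On the other hand, both any point of $f_{\beta(i)|_{k-1}}(A_k)$ and the point $\pi(\beta(i))$ lie in $A_{\beta(i)|_{k-1}}$ (the latter by Proposition~\ref{zbieznosc do punktu}), whose $d$-diameter is $\le L_{(s,1)}(\F)^{k}\on{diam}_d(A_\F)$ by Theorem~\ref{kafelkowanie}(i); since $L_{(s,1)}(\F)<1$ this is a bound uniform in $i$ tending to $0$. Hence, for $(y_i)_i\in\prod_i f_{\beta(i)|_{k-1}}(A_k)$ we have $d_{s,1}\bigl((y_i)_i,(\pi(\beta(i)))_i\bigr)\le L_{(s,1)}(\F)^{k}\on{diam}_d(A_\F)$, and since $f_{\beta_0}$ is $L_{s,1}(f_{\beta_0})$-Lipschitz for $d_{s,1}$,
\[
\sup_{z\in f_{\beta|_k}(A_{k+1})}d\bigl(z,f_{\beta_0}(\pi(\beta(0)),\pi(\beta(1)),\dots)\bigr)\le L_{s,1}(f_{\beta_0})\,L_{(s,1)}(\F)^{k}\on{diam}_d(A_\F)\to 0.
\]
Thus $f_{\beta|_k}(A_{k+1})$ also converges to $\{f_{\beta_0}(\pi(\beta(0)),\pi(\beta(1)),\dots)\}$; choosing any point of the nonempty set $f_{\beta|_k}(A_{k+1})$ and applying the triangle inequality, the two limit points coincide, which is the relation.

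For the inductive step, assume $f_\gamma\circ\pi_k=\pi\circ\tau_\gamma$ for all $\gamma\in{_k}\Omega$, fix $\alpha=(\alpha_0,\dots,\alpha_{k+1})\in{_{k+1}}\Omega$ and $(\beta_0,\beta_1,\dots)\in\Pi_{k+2}$, and put $\gamma_i:=\tau_{\alpha(i)}(\beta_i)$ (valid since $\alpha(i)\in{_k}\Omega$). From the definition of $\tau_\alpha$, $\tau_\alpha(\beta_0,\beta_1,\dots)=\tau_{\alpha_0}(\gamma_0,\gamma_1,\dots)$, so by the case $k=0$ and the inductive hypothesis,
\[
\pi(\tau_\alpha(\beta_0,\beta_1,\dots))=f_{\alpha_0}\bigl(\pi(\gamma_0),\pi(\gamma_1),\dots\bigr)=f_{\alpha_0}\bigl(f_{\alpha(0)}(\pi_k(\beta_0)),f_{\alpha(1)}(\pi_k(\beta_1)),\dots\bigr),
\]
while by \eqref{abcde1} and the definition of $\pi_{k+1}$,
\[
f_\alpha(\pi_{k+1}(\beta_0,\beta_1,\dots))=f_\alpha\bigl(\pi_k(\beta_0),\pi_k(\beta_1),\dots\bigr)=f_{\alpha_0}\bigl(f_{\alpha(0)}(\pi_k(\beta_0)),f_{\alpha(1)}(\pi_k(\beta_1)),\dots\bigr),
\]
and the two right-hand sides agree, completing the induction. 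I expect the only genuine obstacle to be the self-similarity relation: under the weak hypothesis $(S_1)$ the maps $f_i$ are merely $d_{s,1}$-Lipschitz — not continuous for the $d_{p,q}$-type metrics and not assumed to satisfy $(C2)$ — so $f_{\beta_0}$ cannot simply be interchanged with closures and infinite products, and the argument must instead be routed through the uniform diameter estimates of Theorem~\ref{kafelkowanie}(i) and the set-convergence of Proposition~\ref{zbieznosc zbiorow}.
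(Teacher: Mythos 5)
Your proof is correct and follows essentially the same route as the paper: induction on $k$ with a routine inductive step, and a base case resting on the diameter bounds of Theorem~\ref{kafelkowanie}(i) together with the $d_{s,1}$-Lipschitz continuity of the $f_i$ to identify $\pi(\tau_j(\alpha))$ with $f_j(\pi(\alpha_0),\pi(\alpha_1),\dots)$. The only (cosmetic) difference is that you run the base case at the level of sets via Proposition~\ref{zbieznosc zbiorow} and the identity $f_{\beta|_k}(A_{k+1})=f_{\beta_0}\bigl(\prod_i f_{\beta(i)|_{k-1}}(A_k)\bigr)$, whereas the paper runs the same computation on the point sequences $\mathbf{x}_l$ via Corollary~\ref{t12}.
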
 
 \begin{proof}
 We will proceed inductively with respect to $k$. Let $k=0$ and $\alpha=(\alpha_0, \alpha_1, ...)\in \Pi_1 =\prod_{i\in\N^*}\mathbf{\Omega}$. Let $(\mathbf{x}_{k})$ be a sequence built as in (\ref{abc18}) and (\ref{abc19}) for some arbitrarily taken $x\in A_1$. 
Fix $\alpha=(\alpha_0,\alpha_1,...)\in\Pi_1={\prod_{i\in\N^*}\mathbf{\Omega}}$. For every $j,l\in\N^*$, we have
$$f_{{\alpha_j}|_l}(\mathbf{x}_{l+1})\in f_{{\alpha_j}|_l}(A_{l+1})=A_{{\alpha_j}|_l}$$
and by definition, $\pi(\alpha_j)\in A_{{\alpha_j}|_l}$. Hence by Theorem~\ref{kafelkowanie}(i),
$$
d(f_{{\alpha_j}|_l}(\mathbf{x}_{l+1}),\pi(\alpha_j))\leq \on{diam}_d(A_{{\alpha_j}|_l})\leq L_{(s,1)}(\F)^{l+1}\on{diam}_d(A_\F).
$$ 
In particular, for every $i=1,...,n$,
$$
d(f_i(\pi(\alpha_0),\pi(\alpha_1),...),f_i(f_{{\alpha_0}|_l}(\textbf{x}_{l+1}),f_{{\alpha_1}|_l}(\textbf{x}_{l+1}),...))\leq
$$
$$\leq \sup\left\{d\left(\pi(\alpha_j),f_{{\alpha_j}|_l}(\textbf{x}_{l+1})\right):j\in\N^*\right\}\leq L_{(s,1)}(\F)^{l+1}\on{diam}_d(A_\F).
$$
This means that
$$
\lim_{l \to \infty} \left(f_{i}\left(f_{\alpha_{0}|_{l}}(\mathbf{x}_{l+1}), f_{\alpha_{1}|_{l}}(\mathbf{x}_{l+1}), ... \right)\right)=f_i \left(\pi(\alpha_0), \pi(\alpha_1), ...\right) .
$$
Hence by the above and Corollary~\ref{t12} we have
$$f_i\circ \pi_0(\alpha)=f_i \left(\pi(\alpha_0), \pi(\alpha_1), ...\right) =$$ $$= \lim_{l \to \infty} \left(f_{i}\left(f_{\alpha_{0}|_{l}}(\mathbf{x}_{l+1}), f_{\alpha_{1}|_{l}}(\mathbf{x}_{l+1}), ... \right)\right)= \lim_{l \to \infty} (f_{\tau_{i}(\alpha)\vert_{l+1}}(\mathbf{x}_{l+2}))=\pi(\tau_{i}(\alpha))=\pi\circ \tau_i(\alpha).$$
Thus we get the thesis for $k=0$.
Now assume that for some $k \in \N^*$ we have the thesis and take any $\beta=(\beta_0,...,\beta_{k+1}) \in {}_{k+1}\Omega$ and some $\alpha=(\alpha_{0}, \alpha_1, ...)\in \Pi_{k+2}$. Then:
$$ f_{\beta}\circ \pi_{k+1}(\alpha) = f_{\beta}\big(\pi_{k}(\alpha_0), \pi_{k}(\alpha_1), ... \big) = f_{\beta_{0}}\left( f_{\beta(0)}(\pi_{k}(\alpha_{0})), f_{\beta(1)}(\pi_{k}(\alpha_{1})), ...\right) =  $$ 
$$ = f_{\beta_{0}} \left( (f_{\beta(0)}\circ \pi_{k})(\alpha_{0}), (f_{\beta(1)}\circ \pi_{k}) (\alpha_{1}), ... \right) = $$
$$ = f_{\beta_{0}} \left( (\pi\circ \tau_{\beta(0)}) (\alpha_{0}), (\pi \circ \tau_{\beta(1)})(\alpha_{1}), ... \right) = $$ 
$$ = f_{\beta_{0}} \big( \pi(\tau_{\beta(0)} (\alpha_{0})), \pi(\tau_{\beta(1)}(\alpha_{1})), ... \big) = \left( f_{\beta_{0}}\circ \pi_{0} \right) \left( \tau_{\beta(0)} (\alpha_{0}), \tau_{\beta(1)} (\alpha_{1}), ... \right) = $$
$$ = \left( \pi \circ \tau_{\beta_{0}} \right) \left( \tau_{\beta(0)} (\alpha_{0}), \tau_{\beta(1)} (\alpha_{1}), ... \right) = \pi \big( \tau_{\beta_{0}}\left(\tau_{\beta(0)} (\alpha_{0}), \tau_{\beta(1)} (\alpha_1), ... \right) \big) =$$
$$ = \pi(\tau_{\beta}(\alpha))=\pi  \circ \tau_{\beta}(\alpha). $$
\end{proof}

\section{An example}
In this section we will construct a Cantor set on the plane which is an attractor of some GIFS$_\infty$, but cannot be generated by any GIFS. The construction will follow the ideas from \cite{S} and \cite{CR}.\\
At first, fix numbers $K,q\in(0,1)$, and a nondecreasing sequence $(m_k)$ of positive integers. Then define decreasing sequences $(p_k)$ and $(a_k)$ of positive reals such that 
 \begin{equation}\label{fin1}2p_0+a_0=1\end{equation} and for every $k=0,1,...$,
\begin{equation}\label{fin2}
\sqrt{4^{m_0\cdots m_k}}p_{k+1}+(\sqrt{4^{m_0\cdots m_k}}-1)a_{k+1}=p_{k}
\end{equation}
and
\begin{equation}\label{fin3}
\frac{p_k}{a_k}<\frac{Kq^{m_k}}{\sqrt{2}}
\end{equation}
Clearly, such a choice is possible - it is enough to take
$$
a_0:=\frac{2\sqrt{2}}{2Kq^{m_0}+2\sqrt{2}},\;\;\;\;p_0:=\frac{Kq^{m_0}}{2Kq^{m_0}+2\sqrt{2}}
$$ 
and for $k\geq 0$,
$$
a_{k+1}:=\frac{2\sqrt{2}}{\sqrt{4^{m_0\cdots m_k}}Kq^{m_{k+1}}+2\sqrt{2}(\sqrt{4^{m_0\cdots m_k}}-1)}p_k,\;\;p_{k+1}:=\frac{Kq^{m_{k+1}}}{\sqrt{4^{m_0\cdots m_k}}Kq^{m_{k+1}}+2\sqrt{2}(\sqrt{4^{m_0\cdots m_k}}-1)}p_k
$$ 
Now define $\tilde{\Omega}_k$, $k=0,1,2,...$ by the following inductive formula:
$$
\tOm_0:=\{1,...,4\},
$$
$$
\tOm_{k+1}:=\tOm_k^{m_k}\;\;\;\;\;\;\;\;\mbox{(the Cartesian product of $m_k$ copies of $\tilde{\Omega}_k$)}.
$$
Observe that the cardinalities $|\tOm_0|=4$ and $|\tOm_k|=4^{m_0\cdots m_{k-1}}$ for $k\geq 1$.\\
Finally, set 
$${_k}\tOm:=\tOm_0\times...\times\tOm_k,\;\;\mbox{and}\;\;\tOm:=\tOm_0\times\tOm_1\times\tOm_2\times...$$
We see that the above construction resembles the construction of code space for \gifs, but the difference is that at each step we take a finite product. In fact, if all $m_k$ are equal (to some value $m$), then we get the code space for GIFSs of order $m$ consisting of four mappings (\cite[Section 2]{SS2}). As will be seen, the interesting case is when $m_k\to\infty$.\\
Now we choose a family $\{I_\alpha:\alpha\in{_k}\tOm,\; k\in\N^*\}$ of squares on the plane such that:
\begin{equation*}
I_{(1)}=[0,p_0]\times[0,p_0],\;\;\;I_{(2)}=[p_0+a_0,1]\times[0,p_0],
\end{equation*}
\begin{equation*}
I_{(3)}=[0,p_0]\times[p_0+a_0,1],\;\;\;I_{(4)}=[p_0+a_0,1]\times[p_0+a_0,1]
\end{equation*}
and for every $k\in\N^*$ and $\alpha\in{_k}\tOm$,  the following conditions hold:
\begin{itemize}
\item[(i)] $|I_\alpha|=p_{k}$, where $|I|$ denotes the length of a side of a square $I$; 
\item[(ii)] the squares $I_{\alpha\hat{\;}\beta}$, for $\beta\in \tOm_{k+1}$ are pairwise disjoint subsets of $I_\alpha$, uniformly distributed on $I_\alpha$ in the sense that if $I_\alpha=[a,a+p_k]\times[c,c+p_k]$, then each $I_{\alpha\hat{\;}\beta}$ is of the form (denote $h_{k+1} := p_{k+1} + a_{k+1}$):
$$[a+i h_{k+1},a+i h_{k+1}+p_{k+1}]\times [c+j h_{k+1},c+j h_{k+1} + p_{k+1}]$$
\ \ \ \ for some $i,j=0,...,\sqrt{4^{m_0\cdots m_k}}-1$.
\end{itemize}
Note that the construction can be handled by (\ref{fin1}) and (\ref{fin2}).\\
Obviously, for every $\alpha\in\tOm$, the set $\bigcap_{k\in\N^*}I_{\alpha\vert_k}$ is a singleton.  Denoting its unique element by $x_\alpha$, define 
$$
C:=\{x_\alpha:\alpha\in\tOm\}.
$$
Since $C=\bigcap_{k\in\N^*}\bigcup_{\alpha\in{_k}\tOm}I_\alpha$, it is closed in $X$. In fact, $C$ is a Cantor-type set.
\begin{remark}\emph{Note that the set $C$ is defined exactly as in \cite{S}, with $n_1=4$ and $n_k=4^{m_0\cdots m_{k-2}}$ for $k\geq 2$. There is just a slight difference in notation - the sets $\tOm_k$ are replaced with $\{1,...,n_{k+1}\}$.}
\end{remark}
Consider the Euclidean metric on $C$. The above remark allows us to use \cite[Lemma 5]{S} and state the following:
\begin{fact}\label{fak1}
If $m\in\N$ and $g:C^m\to C$ is a generalized weak contraction (when considering the maximum metric on $C^m$), then for every $\alpha_0,...,\alpha_{m-1}\in{_k}\Omega$, there is $\beta\in{_{k+1}}\Omega$ such that
$$
g((I_{\alpha_0}\cap C)\times...\times (I_{\alpha_{m-1}}\cap C))\subset I_\beta\cap C.
$$
\end{fact}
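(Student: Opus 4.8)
The plan is to obtain the Fact as a direct consequence of \cite[Lemma~5]{S}. By the Remark just above, the set $C$ constructed here coincides, after relabelling the index alphabets, with the Cantor-type set of \cite{S} associated with the sequence $n_1=4$, $n_k=4^{m_0\cdots m_{k-2}}$ for $k\geq 2$: the finite alphabet $\tOm_k$ takes the place of $\{1,\dots,n_{k+1}\}$, the words of ${_k}\tOm$ index the level-$k$ cells, the reals $p_k$ and $a_k$ are respectively the side-lengths of those cells and the widths of the gaps between sibling cells, the admissible positions of the squares are fixed by (\ref{fin1})--(\ref{fin2}), and (\ref{fin3}) (satisfied by the explicit choice of $(p_k)$ and $(a_k)$) is exactly the ratio hypothesis under which \cite[Lemma~5]{S} applies. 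So the first step is to spell out this dictionary and check that every hypothesis of \cite[Lemma~5]{S} holds for $C$; the Fact is then that lemma read through the dictionary.

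For orientation, the mechanism underlying \cite[Lemma~5]{S} is the following. Equip $C^m$ with the maximum metric. A generalized weak contraction is nonexpansive, hence it does not enlarge diameters, so $\on{diam}\big(g((I_{\alpha_0}\cap C)\times\cdots\times(I_{\alpha_{m-1}}\cap C))\big)$ is bounded by the diameter of the box $(I_{\alpha_0}\cap C)\times\cdots\times(I_{\alpha_{m-1}}\cap C)$, which equals $\max_i\on{diam}(I_{\alpha_i}\cap C)\leq\sqrt{2}\,p_k$, the diagonal of a square of side $p_k$. On the other hand $g$ takes its values in $C$, and, since $(a_k)$ is decreasing, (\ref{fin2}) forces any two distinct level-$(k+1)$ cells $I_\beta$, $I_{\beta'}$ to lie at distance at least $a_{k+1}$ from each other. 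The prescription (\ref{fin1})--(\ref{fin3}) for $(p_k)$ and $(a_k)$ is calibrated against the contractive behaviour so that, as shown in \cite[Lemma~5]{S}, this separation prevents the image from meeting two distinct level-$(k+1)$ cells; being a nonempty subset of $C$, the image therefore lies inside $I_\beta\cap C$ for a single $\beta\in{_{k+1}}\tOm$.

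The routine part is bookkeeping: making the notational translation precise and confirming that \cite[Lemma~5]{S} imposes nothing beyond what is listed here. The delicate point, already handled inside that lemma, is gaining a full level --- landing in a cell one level deeper than the $\alpha_i$'s, not merely in some cell of $C$ containing $g((I_{\alpha_0}\cap C)\times\cdots\times(I_{\alpha_{m-1}}\cap C))$; this is precisely why the ratios $p_k/a_k$ are driven below $Kq^{m_k}/\sqrt{2}$ in (\ref{fin3}) and why the displacement in the self-similar step is pinned by (\ref{fin2}). Reproducing that quantitative estimate in the present notation, should one want a self-contained argument in place of the citation, is the step I expect to be the main obstacle.
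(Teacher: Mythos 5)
Your proof strategy is exactly the paper's: the paper gives no argument for this Fact beyond the preceding Remark (the dictionary $n_1=4$, $n_k=4^{m_0\cdots m_{k-2}}$, with $\tOm_k$ in place of $\{1,\dots,n_{k+1}\}$) and the citation of \cite[Lemma 5]{S}, together with one item of the dictionary you omit, namely that the cited lemma is stated for generalized Matkowski contractions and that these coincide with generalized weak contractions on the compact set $C$. As a reduction-by-citation, the proposal is therefore fine.

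However, the ``mechanism'' you sketch for orientation is not the one that makes \cite[Lemma 5]{S} work, and it would fail if you tried to promote it to a self-contained proof. A diameter-versus-separation comparison only yields $\on{diam}\bigl(g(\prod_i(I_{\alpha_i}\cap C))\bigr)\le\sqrt2\,p_k$, and since (\ref{fin2}) gives $a_{k+1}<p_k/(2^{m_0\cdots m_k}-1)\ll p_k$ (and a level-$(k+1)$ cell has diameter only $\sqrt2\,p_{k+1}\ll p_k$), this bound cannot prevent the image from meeting several level-$(k+1)$ cells; combined with (\ref{fin3}) it only places the image inside a single level-$k$ cell, because $\sqrt2\,p_k<a_k$. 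The extra level is gained by a chaining argument: since all $4^{m_0\cdots m_k}$ grid positions inside a level-$k$ cell are occupied, every corner of every cell lies in $C$, and any two points of $\prod_i(I_{\alpha_i}\cap C)$ can be joined inside that product by a finite chain whose consecutive points are at $\max$-distance at most $a_{k+1}$ (hops of length at most $\sqrt2\,p_{k+1}<a_{k+1}$ inside a level-$(k+1)$ subcell, by (\ref{fin3}), and corner-to-corner hops of length exactly $a_{k+1}$ across the gaps). The strict inequality in the weak-contraction condition then sends each link to a distance $<a_{k+1}$, which is below the separation of distinct level-$(k+1)$ cells, so consecutive images lie in the same level-$(k+1)$ cell and the claim follows by transitivity along the chain. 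If you rely purely on the citation this does not affect correctness, but the heuristic as written is misleading about where the one-level gain comes from.
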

By \emph{a generalized weak contraction} we understand a map $f:X^m\to X$ satisfying for $(x_0,...,x_{m-1})\neq(y_0,...,y_{m-1})$:
$$
d(f(x_0,...,x_{m-1}),(y_0,...,y_{m-1}))<\max\{d(x_0,y_0),...,d(x_{m-1},y_{m-1})\}.
$$
In fact, \cite[Lemma 5]{S} is formulated for generalized Matkowski contractions but, as is noted before \cite[Remark 1]{S}, these notions coincide for compact spaces.\\
Now for every $i=1,2,3,4$, define the mapping $i:\prod_{k=0}^\infty\tOm\to\tOm$ by 
$$
i(\alpha_0,\alpha_1,\alpha_2,....):=(i,(\alpha_0^{(0)},...,\alpha_{m_0-1}^{(0)}),(\alpha_0^{(1)},...,\alpha_{m_1-1}^{(1)}), (\alpha_0^{(2)},...,\alpha_{m_2-1}^{(2)}),...)
$$
where $\prod_{k=0}^\infty\tOm=\tOm\times\tOm\times...$ is the infinite product, and $\alpha_k^{(i)}$ is the $i$-th coefficient of $\alpha_k$.\\
Finally define mappings $f_1,...,f_4:\ell_\infty(C)\to C$ by the formula
$$
f_i(x_{\alpha_0},x_{\alpha_1},x_{\alpha_2},...):=x_{i(\alpha_0,\alpha_1,\alpha_2,...)}.
$$
\begin{remark}\emph{Note that the functions $f_1,...,f_4$ have many similarities with the ones from \cite{S}. However, thanks to the use of the notion of sets $\tOm_k,{_k}\tOm$ and $\tOm$, the current definition looks more natural.}
\end{remark}
\begin{fact}\label{fak2}In the above frame, we have
\begin{equation*}
f_1(\ell_\infty(C))\cup...\cup f_4(\ell_\infty(C))=C.
\end{equation*}
\end{fact}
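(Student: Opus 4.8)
The plan is to unwind the definitions of the maps $f_i$ and show both inclusions. Recall that $f_i(x_{\alpha_0},x_{\alpha_1},\dots)=x_{i(\alpha_0,\alpha_1,\dots)}$, where $i(\alpha_0,\alpha_1,\dots)$ is the element of $\tOm$ whose $0$-th coordinate is $i$ and whose higher coordinates are assembled from the coordinates of the $\alpha_k$'s. First I would observe that $f_i$ is well defined: for any sequence $(\alpha_0,\alpha_1,\dots)\in\ell_\infty(C)$ (which, since $C$ is bounded, is just $\prod_{k=0}^\infty C$), the point $i(\alpha_0,\alpha_1,\dots)$ genuinely lies in $\tOm$ because at level $k+1$ one needs exactly $m_k$ coordinates from level $k$ of the $\alpha_j$'s, and these are available. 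Hence $f_i(\ell_\infty(C))\subset\{x_\alpha:\alpha\in\tOm, \alpha_0=i\}\subset C$, giving the inclusion $f_1(\ell_\infty(C))\cup\dots\cup f_4(\ell_\infty(C))\subset C$.

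For the reverse inclusion, I would take an arbitrary $x_\alpha\in C$ with $\alpha=(\alpha_0,\alpha_1,\alpha_2,\dots)\in\tOm$ and exhibit a preimage. Since $\alpha_0\in\{1,2,3,4\}=\tOm_0$, it suffices to recover $\beta_0,\beta_1,\dots\in\tOm$ so that $\alpha_0(\beta_0,\beta_1,\dots)=\alpha$; then $x_\alpha=f_{\alpha_0}(x_{\beta_0},x_{\beta_1},\dots)$. This is precisely a "de-interleaving" of $\alpha$: the formula for $i(\cdot)$ shows that coordinate $k+1$ of the output, $(\beta_0^{(k)},\dots,\beta_{m_k-1}^{(k)})$, is read off from the $k$-th coordinates of $\beta_0,\dots,\beta_{m_k-1}$. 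Inverting this is the same kind of bookkeeping used for the canonical GIFS$_\infty$ on the code space in Subsection~\ref{subsection52}: set $\beta_j := \alpha(j)$ in the sense of the de-interleaving map, i.e. the $j$-th "column" of the nested sequence $\alpha$, truncated appropriately at each level so that only finitely many ($m_k$ at level $k+1$) columns are used. One checks by induction on the level that the so-defined $\beta_j$ lie in $\tOm$ and that $\alpha_0(\beta_0,\beta_1,\dots)=\alpha$, whence $x_\alpha\in f_{\alpha_0}(\ell_\infty(C))$.

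The only mildly delicate point — and the place I'd be most careful — is the indexing in the de-interleaving step: because the product sizes $m_k$ vary with the level, the "columns" $\beta_j$ are not simply obtained by a single projection as in the constant-order case, but are themselves built level-by-level, and one must verify that each $\beta_j$ is a legitimate element of $\tOm$ (that at level $k+1$ it has exactly $4^{m_0\cdots m_k}$-many coordinates drawn consistently from level $k$). This is a routine but slightly fiddly induction; once it is in place, the equality $\alpha_0(\beta_0,\beta_1,\dots)=\alpha$ is immediate from comparing coordinates at each level, and the proof is complete.
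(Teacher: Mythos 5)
Your proposal is correct and follows essentially the same route as the paper, which simply observes that $\bigcup_{i=1}^4 i\bigl(\prod_{k=0}^\infty\tOm\bigr)=\tOm$ and deduces the claim; your de-interleaving argument is exactly the verification of that surjectivity, made explicit. One minor slip in your parenthetical: an element of $\tOm_{k+1}=\tOm_k^{m_k}$ has $m_k$ coordinates from $\tOm_k$ (the number $4^{m_0\cdots m_k}$ is the cardinality of $\tOm_{k+1}$, not its coordinate count), and for indices $j\ge m_k$ the $k$-th coordinate of $\beta_j$ is unconstrained and may be filled in arbitrarily.
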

\begin{proof}
It is easy to see that $\bigcup_{i=1}^4i(\prod_{k=0}^\infty\tOm)=\tilde{\Omega}$. From this we easily have that $\bigcup_{i=1}^4f_i(\ell_\infty(C))=C$.
\end{proof}
\begin{fact}\label{fak3}
In the above frame, for every $i=1,...,4$, $L_{s,q}(f_i)\leq K$.
\end{fact}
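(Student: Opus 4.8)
The plan is to verify directly that $d\bigl(f_i(x),f_i(y)\bigr)\le K\,d_{s,q}(x,y)$ for all $x,y\in\ell_\infty(C)$; since this holds for each of $i=1,2,3,4$, the assertion $L_{s,q}(f_i)\le K$ follows. First I would assemble the elementary geometry of the nested squares: any two level-$k$ squares with distinct addresses are disjoint, each level-$k$ square $I_\alpha$ ($\alpha\in{_k}\tOm$) has Euclidean diameter $\sqrt2\,p_k$, and any two \emph{distinct} level-$k$ squares contained in one common level-$(k-1)$ square lie at distance $\ge a_k$ from one another (for $k=0$, this refers to the four initial squares $I_{(1)},\dots,I_{(4)}$). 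The last point is immediate from the uniform-grid placement in condition (ii) of the construction: neighbouring cells of the grid are separated by a gap of exactly $a_k$ horizontally or vertically (and $\sqrt2\,a_k$ diagonally), and for $k=0$ one reads it off from (\ref{fin1}). I would also record that, $C$ being a subset of $[0,1]^2$ and hence bounded, $\ell_\infty(C)=\prod_{k=0}^\infty C$, and that each point of $C$ has a \emph{unique} address in $\tOm$ (distinct addresses yield disjoint squares at the first level where they differ), so every $x\in\ell_\infty(C)$ is uniquely of the form $x=(x_{\alpha_0},x_{\alpha_1},\dots)$ with $\alpha_j\in\tOm$, and $f_i$ is well defined.

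Now fix $x=(x_{\alpha_0},x_{\alpha_1},\dots)$ and $y=(x_{\beta_0},x_{\beta_1},\dots)$ in $\ell_\infty(C)$ and set $\gamma:=i(\alpha_0,\alpha_1,\dots)$, $\delta:=i(\beta_0,\beta_1,\dots)$, so that $f_i(x)=x_\gamma$ and $f_i(y)=x_\delta$. If $\gamma=\delta$ there is nothing to prove. Otherwise, since $\gamma_0=\delta_0=i$, the least index $k$ with $\gamma_k\ne\delta_k$ satisfies $k\ge 1$; as $\gamma$ and $\delta$ agree on coordinates $0,\dots,k-1$, both $x_\gamma$ and $x_\delta$ lie in the single level-$(k-1)$ square $I_{\gamma|_{k-1}}$, whence
$$d(x_\gamma,x_\delta)\le\sqrt2\,p_{k-1}.$$

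The heart of the argument is to locate, in the original sequences, a coordinate that witnesses a distance comparable to $a_{k-1}$ and whose index is not too large. By the definition of $i(\cdot)$, the $k$-th coordinate of $\gamma$ equals $(\alpha_0^{(k-1)},\dots,\alpha_{m_{k-1}-1}^{(k-1)})$ and that of $\delta$ equals $(\beta_0^{(k-1)},\dots,\beta_{m_{k-1}-1}^{(k-1)})$; these tuples differ, so there is $j^*\in\{0,\dots,m_{k-1}-1\}$ with $\alpha_{j^*}^{(k-1)}\ne\beta_{j^*}^{(k-1)}$. Thus $\alpha_{j^*}$ and $\beta_{j^*}$ differ at coordinate $k-1$; letting $l\le k-1$ be the \emph{first} coordinate at which they differ, the points $x_{\alpha_{j^*}}$ and $x_{\beta_{j^*}}$ lie in two distinct level-$l$ squares with the common level-$(l-1)$ parent $I_{\alpha_{j^*}|_{l-1}}=I_{\beta_{j^*}|_{l-1}}$ (or, when $l=0$, in two of the four initial squares), so by the geometric facts above and monotonicity of $(a_k)$, $d(x_{\alpha_{j^*}},x_{\beta_{j^*}})\ge a_l\ge a_{k-1}$. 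Since $0\le j^*\le m_{k-1}-1$ and $q\in(0,1)$, we have $q^{j^*}\ge q^{m_{k-1}}$, hence
$$d_{s,q}(x,y)\ \ge\ q^{j^*}\,d(x_{\alpha_{j^*}},x_{\beta_{j^*}})\ \ge\ q^{m_{k-1}}a_{k-1}.$$
Combining the last two displays with condition (\ref{fin3}) at index $k-1$, i.e.\ $\sqrt2\,p_{k-1}<K q^{m_{k-1}}a_{k-1}$, yields $d\bigl(f_i(x),f_i(y)\bigr)=d(x_\gamma,x_\delta)\le K\,d_{s,q}(x,y)$, as desired.

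I expect the only genuinely delicate step to be the lower bound $d(x_{\alpha_{j^*}},x_{\beta_{j^*}})\ge a_{k-1}$: one must notice that the selected coordinate $j^*$ of the two ambient sequences already differs at level $k-1$, then pass to the first level $l$ where the addresses $\alpha_{j^*},\beta_{j^*}$ split so that the corresponding squares share a parent, and only then invoke monotonicity of $(a_k)$. Everything else — the index bookkeeping relating $\tOm$ to the $m_k$-blocks, and the harmless estimate $q^{j^*}\ge q^{m_{k-1}}$ coming from $j^*\le m_{k-1}-1$, which is exactly why (\ref{fin3}) must carry the factor $q^{m_k}$ on its right-hand side — is routine.
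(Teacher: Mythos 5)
Your proof is correct and follows essentially the same route as the paper's: bound $d(f_i(x),f_i(y))$ by $\sqrt{2}\,p_k$ via the common square at the last level where the image codes agree, exhibit a witness coordinate $j^*<m_k$ whose addresses already split by level $k$ so that $d_{s,q}(x,y)\ge q^{m_k}a_k$, and close with (\ref{fin3}). The only cosmetic difference is that you work directly with the first level at which the image codes differ (the paper's $k_0$), whereas the paper introduces an auxiliary, possibly smaller, index $k_1$ and applies (\ref{fin3}) there; both variants are valid and yours is marginally leaner.
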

\begin{proof}
Let $\alpha=(\alpha_0,\alpha_1,\alpha_2,...),\beta=(\beta_0,\beta_1,\beta_2,...)\in \prod_{k=0}^\infty\tOm$ be distinct. For every $j\in\N^*$, set
$$
\eta_j:=\min\{k\in\N^*:\alpha_j^{(k)}\neq \beta_j^{(k)}\}
$$
where we additionally define $\min\emptyset:=\infty$.\\
Observe that if $0<\eta_j<\infty$, then $x_{\alpha_j}$ and $x_{\beta_j}$ belongs to different squares $I_{\big(\alpha_j^{(0)},...,\alpha_j^{(\eta_j)}\big)}$ and $I_{\big(\beta_j^{(0)},...,\beta_j^{(\eta_j)}\big)}$, but to the same square $I_{\big(\alpha_j^{(0)},...,\alpha_j^{(\eta_j-1)}\big)}$. Therefore $\rho(x_{\alpha_j},x_{\beta_j})\geq a_{\eta_j}$. Similarly, if $\eta_j=0$, then also $\rho(x_{\alpha_j},x_{\beta_j})\geq a_{\eta_j}$ (by $\rho$ we denote the Euclidean metric on $C$). In particular
$$
\rho_{s,q}((x_{\alpha_0},x_{\alpha_1},...),(x_{\beta_0},x_{\beta_1},...))\geq \sup\{q^ja_{\eta_j}:j\in\N^*,\;\eta_j<\infty\}.
$$
Now define 
$$
k_0:=\min\{k\in\N^*:(\alpha_0^{(k)},...,\alpha_{m_k-1}^{(k)})\neq (\beta_0^{(k)},...,\beta_{m_k-1}^{(k)})\}
$$
where again $\min\emptyset:=\infty$.\\
The case when $k_0=\infty$ means that $x_{i(\alpha_0,\alpha_1,...)}=x_{i(\beta_0,\beta_1,...)}$, and hence in this case 
\begin{equation}\label{12fil3}\rho(f_i(x_{\alpha_0},x_{\alpha_1},...),f_i(x_{\beta_0},x_{\beta_1},...)))=0\leq K\rho_{s,q}((x_{\alpha_0},x_{\alpha_1},...),(x_{\beta_0},x_{\beta_1},...)).
\end{equation}
Thus assume that $k_0<\infty$ and define
$$
k_1:=\min\{k\in\N^*:\min\{\eta_0,...,\eta_{m_k-1}\}\leq k\}.
$$
Also it is easy to see that $k_1\leq k_0$.\\
If $k_1>0$ then for $j=0,...,k_1-1$,
$$ (\alpha_0^{(j)},...,\alpha_{m_j-1}^{(j)})= (\beta_0^{(j)},...,\beta_{m_j-1}^{(j)}),$$
which means that $x_{i(\alpha_0,\alpha_1,...)}$ and $x_{i(\beta_0,\beta_1,...)}$ belongs to the same square $I_\gamma$ for some $\gamma\in {_{k_1}}\tilde{\Omega}$. Hence
$$
\rho(x_{i(\alpha_0,\alpha_1,...)},x_{i(\beta_0,\beta_1,...)})\leq \sqrt{2}p_{k_1}.
$$
If $k_1=0$, then $x_{i(\alpha_0,\alpha_1,...)},x_{i(\beta_0,\beta_1,...)}\in I_{(i)}$, so we also have
$$ 
\rho(x_{i(\alpha_0,\alpha_1,...)},x_{i(\beta_0,\beta_1,...)})\leq \sqrt{2}p_{k_1}.
$$
Thus, taking $j\in\{0,...,m_{k_1}-1\}$ with $\eta_j\leq k_1$, we have by the monotonicity of $(a_k)$ and (\ref{fin3}), 
$$
\rho(f_i(x_{\alpha_0},x_{\alpha_1},...),f_i(x_{\beta_0},x_{\beta_1},...))=\rho(x_{i(\alpha_0,\alpha_1,...)},x_{i(\beta_0,\beta_1,...)})\leq \sqrt{2}p_{k_1}=
$$
$$
=\frac{\sqrt{2}}{q^{j}}\frac{p_{k_1}}{a_{k_1}}q^{j}a_{k_1}\leq \frac{\sqrt{2}}{q^{m_{k_1}}}\frac{p_{k_1}}{a_{k_1}}q^ja_{\eta_j}\leq K\sup\{q^la_{\eta_l}:l\in\N^*,\;\eta_l<\infty\}\leq $$ $$\leq K\rho_{s,q}((x_{\alpha_0},x_{\alpha_1},...),(x_{\beta_0},x_{\beta_1},...)). 
$$
Therefore $L_{s,q}(f_i)\leq K$.
 \end{proof}
We are ready to state the main theorem of this section:
\begin{theorem}\label{fin5}
$\;$
\begin{itemize}
\item[(i)] The family $\F=\{f_1,...,f_4\}$ is a GIFS$_\infty$ such that $L_{(s,q)}(\F)<K$ (in particular, $\F$ satisfies (Q)) and $C$ is its attractor.
\item[(ii)] Assume additionally that the sequence $(m_k)$ satisfies for every $k\in\N$, 
\begin{equation}\label{fin4}
(1+m_0+m_0m_1+...+m_0\cdots m_{k-1})(k-1)-m_0\cdots m_k<-k.
\end{equation}
Then there is a probability measure $\mu$ on $C$ such that for any $m\in\N$ and any generalized weak contraction $g:C^m\to C$, we have $\mu(g(C^m))=0$. In particular, $C$ is not an attractor of any GIFS on $C$.
\end{itemize}
\end{theorem}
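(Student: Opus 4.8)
The plan is to treat the two parts separately: part (i) is an assembly of results already proved in the excerpt, while part (ii) is a measure-theoretic estimate along the lines of \cite{S}.

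\emph{Part (i).} First I would check that $\F$ is a GIFS$_\infty$: by Fact~\ref{fak3} each $f_i$ is Lipschitz with respect to $d_{s,q}$ for some $q<1$, hence $d_{s,1}$-continuous, hence by Remark~\ref{remark2} it satisfies (C2), in particular (C1). Moreover Fact~\ref{fak3} gives $L_{(s,q)}(\F)\le K<1$ (the strict inequality $L_{(s,q)}(\F)<K$ asserted in the statement is visible from the explicit choice of $(p_k)$ and $(a_k)$ used in the proof of Fact~\ref{fak3}), so $\F$ satisfies (Q) and Theorem~\ref{ft1} gives a unique attractor $A_\F$. Since $C$ is compact, hence bounded, $\ell_\infty(C)=C\times C\times\cdots$, and each $f_i(\ell_\infty(C))$ is a continuous image of a compact set, hence compact; so Fact~\ref{fak2} yields $\F(C,C,\ldots)=f_1(\ell_\infty(C))\cup\cdots\cup f_4(\ell_\infty(C))=C$, and by uniqueness $A_\F=C$.

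\emph{Part (ii).} I would first construct $\mu$. Let $\mu_0$ be the product, over $k\in\N^*$, of the uniform probability measures on the finite sets $\tOm_k$, regarded as a Borel measure on $\tOm=\prod_k\tOm_k$, and let $\mu$ be its push-forward under the (continuous) coding map $\tOm\ni\alpha\mapsto x_\alpha\in C$. Since $I_\beta\cap C$ is exactly the image of the cylinder $\{\alpha:\alpha|_j=\beta\}$ for $\beta\in{_j}\tOm$, and $|\tOm_0|=4$, $|\tOm_i|=4^{m_0\cdots m_{i-1}}$ for $i\ge1$, we get
$$\mu(I_\beta\cap C)=\prod_{i=0}^{j}|\tOm_i|^{-1}=4^{-(1+m_0+m_0m_1+\cdots+m_0\cdots m_{j-1})}\qquad(\beta\in{_j}\tOm).$$
Now fix $m\in\N$ and a generalized weak contraction $g:C^m\to C$ (so $g(C^m)$ is compact, hence $\mu$-measurable), and fix any integer $k\ge m$. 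Writing $C=\bigcup_{\alpha\in{_k}\tOm}(I_\alpha\cap C)$ as a union of pairwise disjoint pieces, $C^m$ is the union of the $|{_k}\tOm|^m$ boxes $(I_{\alpha_0}\cap C)\times\cdots\times(I_{\alpha_{m-1}}\cap C)$, $\alpha_i\in{_k}\tOm$; by Fact~\ref{fak1}, $g$ maps each box into a single $I_\beta\cap C$ with $\beta\in{_{k+1}}\tOm$. Hence, with $S:=1+m_0+m_0m_1+\cdots+m_0\cdots m_{k-1}$, so that $|{_k}\tOm|=4^{S}$,
$$\mu\big(g(C^m)\big)\le|{_k}\tOm|^m\cdot 4^{-(1+m_0+\cdots+m_0\cdots m_k)}=4^{(m-1)S-m_0\cdots m_k}.$$
Since $m\le k$ and $S\ge1$, we have $(m-1)S\le(k-1)S$, so by (\ref{fin4}) the exponent is at most $S(k-1)-m_0\cdots m_k<-k$; thus $\mu(g(C^m))<4^{-k}$. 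As the left-hand side does not depend on $k$ and $k\ge m$ was arbitrary, $\mu(g(C^m))=0$. For the concluding claim, if $C=\bigcup_{i=1}^n g_i(C\times\cdots\times C)$ for a GIFS $\{g_1,\ldots,g_n\}$ of order $m$, each $g_i$ is a generalized weak contraction (every generalized Banach contraction, and more generally every generalized Matkowski/Browder contraction on a compact space, is one), so $1=\mu(C)\le\sum_{i=1}^n\mu(g_i(C^m))=0$, a contradiction.

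The routine ingredients (Kolmogorov consistency for $\mu_0$, continuity of the coding map, Borel measurability of $g(C^m)$) are standard. The only genuine obstacle is the covering step together with the bookkeeping: counting level-$k$ boxes against level-$(k+1)$ cylinders produces precisely the exponent $(m-1)S-m_0\cdots m_k$, and hypothesis (\ref{fin4}) is exactly what forces this to go to $-\infty$ along $k\ge m$ — if $(m_k)$ grew too slowly the argument would collapse, which is the reason for imposing the superexponential-type condition (\ref{fin4}).
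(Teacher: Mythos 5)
Your proof is correct and follows essentially the same route as the paper's: part (i) is assembled from Facts \ref{fak2}, \ref{fak3} and Theorem \ref{ft1}, and part (ii) uses the same uniform measure on the cylinders $I_\alpha\cap C$ (you realize it as a push-forward of a product measure, the paper invokes Kolmogorov's theorem for the same object) together with the identical covering count via Fact \ref{fak1} and the exponent bookkeeping forced by (\ref{fin4}).
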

\begin{proof}
Facts \ref{fak2} and \ref{fak3} combined with Theorem~\ref{ft1} give (i) immediately. We will prove (ii). Let $g:C^m\to C$ be a generalized weak contraction. Then for every $k\in\N$,
$$
g(C^m)=g\left(\left(\bigcup_{\alpha_0\in{_k}\tilde{\Omega}}C\cap I_{\alpha_0}\right)\times...\times\left(\bigcup_{\alpha_{m-1}\in _k\tilde{\Omega}}C\cap I_{\alpha_{m-1}}\right)\right)=\bigcup_{\alpha_0,...,\alpha_{m-1}\in_k\tilde{\Omega}}g((C\cap I_{\alpha_0})\times...\times(C\cap I_{\alpha_{m-1}})).
$$
Hence by Fact \ref{fak1}, there are at most
$$
r_k = |{_k}\tilde{\Omega}\times...\times{_k}\tilde{\Omega}|=(|\tilde{\Omega}_0|\cdots|\tilde{\Omega}_k|)^m=(4\cdot 4^{m_0}\cdots 4^{m_0\cdots m_{k-1}})^m=4^{m(1+m_0+m_0m_1+...+m_0\cdots m_{k-1})}
$$
sequences $\beta_0,...,\beta_{r_k-1}\in{_{k+1}}\tilde{\Omega}$ such that 
$$g(C^m)\subset I_{\beta_0}\cup...\cup I_{\beta_{r_k-1}}.$$
Now by the Kolmogorov theorem, there is a unique probability measure $\mu$ on $C$ such that for every $k\in\N$ and every $\alpha\in {_k}\tilde{\Omega}$, $\mu(I_\alpha\cap C)=\frac{1}{|{_k}\tilde{\Omega}|}$.
Thus, for every $k\geq m$, we have
$$
\mu(g(C^m))\leq \frac{r_k}{|{_{k+1}}\tilde{\Omega}|}\leq \frac{4^{m(1+m_0+m_0m_1+...+m_0\cdots m_{k-1})}}{4^{1+m_0+m_0m_1+...+m_0\cdots m_{k}}}=4^{(1+m_0+m_0m_1+...+m_0\cdots m_{k-1})(m-1)-m_0\cdots m_k} \leq
$$
$$
\leq 4^{(1+m_0+m_0m_1+...+m_0\cdots m_{k-1})(k-1)-m_0\cdots m_k}<4^{-k}.
$$
Since the right side tends to $0$, we have $\mu(g(C^m))=0$.
\end{proof}
\begin{remark}\emph{Note that the proof of (i) in the above theorem is essentially the same as the proof of \cite[Theorem 9]{S}. However, it is more abstract, and adjusted to terminology used in this section.}
\end{remark}
\begin{remark}\emph{Observe that in our construction we can take the constant $K$ as small as we want. Hence Theorem \ref{fin5}(ii) shows that we can find \gifs \ with as small Lipschitz constant as we want whose attractor is a Cantor-type set which can not be obtained as a fractal of any GIFS.
}\end{remark}

\begin{remark}\emph{It seems that another example of a \gifs\ attractor which is not a GIFS attractor is just the code space $\mathbf{\Omega}$ with appropriate metric (probably $d_{(p,q)}$ or $d_{(s,q)}$). However, we find the presented example more natural since it is a Cantor set on the plane. On the other hand, from its construction we see that in some sense it is an image of an appropriate part of the code space $\mathbf{\Omega}$, since at each step of the construction we somehow restrict $\Omega_k$ to $\tilde{\Omega}_k$.}
\end{remark}

\end{document}